\documentclass[12pt]{amsart}
%
%Packages
\usepackage{
  a4wide, 
  amsmath,
  amsfonts,
  amssymb,
%  pvsymb,   % symbol font
  graphicx, 
  times,
  color,
  hyphenat,
%  mathptmx,
  pinlabel,
%  tikz,
  stmaryrd, mathabx 
}
\input xy
\xyoption{all}
%
% definitions, commands and styles
%%%%%%%%%%%%%%%%%%%%%%%%%%%%%%%%
%                              %
% Commands, definitions        %
% and styles                   %
%                              %
%%%%%%%%%%%%%%%%%%%%%%%%%%%%%%%%

%\setlength{\textwidth}{5.96in}
%\setlength{\oddsidemargin}{0.5in}
%\setlength{\evensidemargin}{0.4in}

%Pedro: commands
\newcommand*{\medcup}{\mathbin{\scalebox{0.82}{\ensuremath{\bigcup}}}}%
\newcommand{\sll}{{\mathfrak{sl}_{n+1}}}
\newcommand{\sln}{{\mathfrak{sl}_{n}}}

\newcommand{\und}[1]{{\underline{#1}}}

\newcommand{\llambda}{{\bar{\lambda}}}
\newcommand{\lmu}{{\bar{\mu}}}

\newcommand{\n}{\noindent}

\DeclareMathOperator{\amod}{mod}
\DeclareMathOperator{\fmod}{fmod}
\DeclareMathOperator{\prmod}{pmod}

\DeclareMathOperator{\Fun}{Fun}

\DeclareMathOperator{\End}{End}
\DeclareMathOperator{\Ext}{Ext}
\DeclareMathOperator{\ext}{ext}
\DeclareMathOperator{\Cxt}{Coext}
\DeclareMathOperator{\CExt}{CoExt}

\DeclareMathOperator{\gdim}{gdim}

\DeclareMathOperator{\seq}{Seq}

%insert figures: 
        % set height
\newcommand{\figins}[3] 
{\raisebox{#1pt}{\includegraphics[height=#2 in]{figs/#3}}}
        % set width

        % set height AND width

        % set scale

%Style
\newtheorem{thm}{Theorem}[section]
\newtheorem{lem}[thm]{Lemma}
\newtheorem{cor}[thm]{Corollary}
\newtheorem{prop}[thm]{Proposition}

\newtheorem{conj}[thm]{Conjecture}

\theoremstyle{definition}
\newtheorem{defn}[thm]{Definition}

%Newcommands

\newcommand{\bN}{\mathbb{N}}
\newcommand{\bZ}{\mathbb{Z}}
\newcommand{\bQ}{\mathbb{Q}}

\newcommand{\cD}{\mathcal{D}}

\newcommand{\cG}{\mathcal{G}}

\newcommand{\cQ}{\mathcal{Q}}

\newcommand{\cS}{\mathcal{S}}
\newcommand{\cT}{\mathcal{T}}
\newcommand{\cU}{\mathcal{U}}
\newcommand{\cV}{\mathcal{V}}

\DeclareMathOperator{\Hom}{Hom}

\DeclareMathOperator{\Ind}{Ind}
\DeclareMathOperator{\Res}{Res}
\DeclareMathOperator{\res}{res}

\DeclareMathOperator{\Kar}{Kar}

\newcommand{\xra}[1]{\xrightarrow{#1}}

\newcommand{\ket}[1]{|\,#1\,\rangle}

%%%%%%%%%%%%%%%%%%%%%%
% macro for captions %
%%%%%%%%%%%%%%%%%%%%%%
%
% must write
%     \newdimen\captionwidth\captionwidth=\hsize
% after begin{document} (it must be defined locally!!)
%
\catcode`\@=11
\long\def\@makecaption#1#2{%
    \vskip 10pt
    \setbox\@tempboxa\hbox{%
\small{#1: }\ignorespaces #2}%
    \ifdim \wd\@tempboxa >\captionwidth {%
        \rightskip=\@captionmargin\leftskip=\@captionmargin
        \unhbox\@tempboxa\par}%
      \else
        \hbox to\hsize{\hfil\box\@tempboxa\hfil}%
    \fi}
\newdimen\@captionmargin\@captionmargin=2\parindent
\newdimen\captionwidth\captionwidth=\hsize
\catcode`\@=12
%
%

%
%
%
%
%%%%%%%%%%%%%%%%%%%%%%%%%%%%%%%%%%%%%%%%
%           title                    %%%
%%%%%%%%%%%%%%%%%%%%%%%%%%%%%%%%%%%%%%%%
%
\title{KLR algebras and the branching rule I: The categorical Gelfand-Tsetlin basis in type $A_n$}
\author{Pedro Vaz}
\address{Institut de Recherche en Math\'ematique et Physique\\
Universit\'e Catholique de Louvain\\ 
Chemin du Cyclotron 2\\ 
1348 Louvain-la-Neuve\\ 
Belgium}
\email{pedro.vaz@uclouvain.be}
\begin{document}
%
%
% this must be defined locally (to center captions)
\newdimen\captionwidth\captionwidth=\hsize
%
%%%%%%%%%%%%%%%%%%%%%%%%%%%%%%%%%%%%%%%%
%%%                                  %%%
%%%            Abstract              %%%
%%%                                  %%%
%%%%%%%%%%%%%%%%%%%%%%%%%%%%%%%%%%%%%%%%
%
\begin{abstract}
We define a quotient of the category of finitely generated  
modules over the cyclotomic Khovanov-Lauda-Rouquier algebra for type $A_n$  
and show it has a module category structure over a direct sum of certain 
cyclotomic Khovanov-Lauda-Rouquier algebras of type $A_{n-1}$, 
this way categorifying the branching rules for $\sll\supset \sln$. 
Using this we provide an  
elementary proof of Khovanov-Lauda's cyclotomic conjecture. 
We show that continuing recursively gives the Gelfand-Tsetlin basis for type $A_n$.   
As an application we prove a conjecture of Mackaay, Sto\v si\'c and Vaz 
concerning categorical Weyl modules. 
\end{abstract}
\maketitle
%\begin{center}
%\textcolor{red}{\textsc{\textbf{Very Preliminary Version}}}: %\\[2ex]
%v.3 -\today \\ \bigskip 
%\end{center}

%\tableofcontents
%
%
\pagestyle{myheadings}
%\markright{Pedro Vaz}
\markboth{\em\small Pedro Vaz}{\em\small KLR algebras and the branching rule I: Gelfand-Tsetlin basis }
%
%
%
%%%%%%%%%%%%%%%%%%%%%%%%%%%%%%%%%%%%%%%%
%%%                                  %%%
%%%        Introduction              %%%
%%%                                  %%%
%%%%%%%%%%%%%%%%%%%%%%%%%%%%%%%%%%%%%%%%
\section{Introduction}\label{sec:intro}

Let $A$ and $B$ be associative algebras, $M$ a (left) $B$-module and 
$f\colon A\to B$ 
a map of algebras. 
Then $A$ acts on $M$ through $f$ by the formula $a.m=f(a)m$ for $a$ in $A$ 
and $m$ in $M$. 
This procedure turns each (left) module over $B$ into a (left) module over $A$. 
It is well known that this operation defines a functor between the categories of 
modules over the respective algebras. 
Each homomorphism of algebras $f\colon A\to B$ 
gives rise to a functor of restriction 
between their categories of representations
\begin{equation*}
\res_B^A\colon B-\amod\to A-\amod
\end{equation*}
defined by $M\mapsto f M = f B \otimes_B M$ for 
left $A$-modules $N$ and $B$-modules $M$. 
Here $fM$ means the structure of $A$-module on the $M$ as defined above: $a.fm:=f(a)m$.

In general an irreducible object $M$ in $B-\amod$ is not sent to an irreducible  
over $A$ but we can restrict to categories of modules which are totally reducible:  
any module is isomorphic to a direct sum of irreducible modules, determined up to isomorphism, which is unique up to 
permutation of its summands.  
In this case $\res_B^A(M)$ decompose as a direct sum of 
irreducibles over $A$ and the obtention of such a decomposition  
gives the \emph{branching rule of $B$ with respect to $A$}.

\medskip

The study of the branching rules has its roots in group theory and were first obtained in a systematic way in 
the study of the representations of the classical groups. 
They were subsequently extended to categories of representations of other types of algebras like for example 
associative algebras, Lie algebras,  
Hopf algebras and quantum groups.
Besides being a useful tool in the study of the representations of the objects under consideration, 
the branching rules have been extensively studied in theoretical physics where they have found 
important applications in the study of systems through reduction of its group of symmetry to one of its subgroups
(see~\cite{molev,IN}, the review~\cite{KT} and the references therein).

\medskip  

Let us consider the case of Kac-Moody algebras associated to finite quivers. 
For each embedding $\Gamma_2\hookrightarrow\Gamma_1$ of quivers there is an embedding 
of the Kac-Moody algebras $A_{\Gamma_2}\hookrightarrow A_{\Gamma_1}$ 
associated to $\Gamma_1$ and $\Gamma_2$. 
If we restrict to the categories of integrable representations  
then every irreducible integral representation $V(A_{\Gamma_1})$ of $A_{\Gamma_1}$ 
is isomorphic as a representation of $A_{\Gamma_2}$ to a direct sum of irreducibles~\cite{GK}.   
In some cases a general procedure exists to obtain the branching rules for this 
embedding, but in the general case has to work out the result case by case  
(see~\cite{BKW,molev,IN} for a general treatment of the branching rule for classical Lie algebras). 
These results extend to the quantum version of Kac-Moody algebras, which is the case 
we are interested in. 
Further application of the branching rule eventually gets us to a direct sum of irreducibles over the one-dimensional Kac-Moody algebra 
(one-dimensional spaces therefore). 
Including this collection of spaces back into $V(A_{\Gamma_1})$ defines a distinguished basis which is 
an example of a canonical basis and is 
called the Gelfand-Tsetlin basis after~\cite{GT1} 
(see also~\cite{cherednik}). 
Besides its interest to representation theory, 
Gelfand-Tsetlin have been applied to problems in mathematical physics~\cite{KT}.

\medskip

In a remarkable series of papers~\cite{KL1,KL2,Rouq1} M.~Khovanov, A.~Lauda and independently R. Rouquier introduced a family 
of Hecke algebras associated to a quiver (see also~\cite{Rouq2}). 
These \emph{quiver Hecke} algebras, which became known as \emph{KLR algebras}, 
have shown to have a rich representation theory 
(\cite{BK,HMM,Klesh-Ram,Klesh-Ram2})  
but more immediate to us in this paper is the fact that 
the KLR algebra associated to a quiver $\Gamma$ categorifies the lower 
half of the quantum version of the Kac-Moody algebra associated to $\Gamma$, 
which means that the latter is isomorphic to the Grothendieck ring of the former.  
For each dominant integral weight $\lambda$ the KLR algebra $R_\Gamma$ admits a quotient, denoted $R_\Gamma^\lambda$, 
which is called a cyclotomic quotient after~\cite{BK}, 
and whose Grothendieck group is isomorphic to the integral representation $V_\lambda$ of $A(\Gamma)$:  
the category of graded modules over $R_\Gamma^\lambda$, finite in each degree,  
admits a categorical action of $A(\Gamma)$ which descends to the Grothendieck group yielding  
a representation which is isomorphic to $V_\lambda$~\cite{KK,W1}.  

\medskip

In this paper we concentrate on the case where $\Gamma_1$ is  
the Dynkin diagram of type $A_{n}$ and $\Gamma_2$ is the Dynkin diagram obtained from $\Gamma_1$ 
by removing the vertex labeled $n$ 
(and the corresponding edge),  
and investigate the consequences for the corresponding KLR algebras and its cyclotomic quotients. 
The inclusion of quivers $A_{n-1}\hookrightarrow A_n$ determines an inclusion 
$R_{n-1}\hookrightarrow R_{\n}$
between the respective KLR algebras.  
This gives rise to functors of restriction and induction between their categories of representations 
which turn out to descend to the usual inclusion and projection maps between the corresponding 
(one-half) quantum Kac-Moody algebras. 
This approach needs to be modified to work with cyclotomic quotients. In this case 
there is a projection of the cyclotomic KLR algebra $R_{A_n}^\lambda$ to a 
direct sum of cyclotomic KLR algebras $\oplus_{\mu\in\tau(\lambda)} R_{A_{n-1}}^\mu$ with the set $\tau(\lambda)$  
being determined combinatorially from $\lambda$.  
We obtain a functor 
\begin{equation*}
\Pi\colon R_{A_n}^\lambda-\fmod\to\bigoplus_{\mu\in \tau(\lambda)} R_{A_n-1}^\mu-\fmod 
\end{equation*}
between their categories of graded, finite dimensional modules 
which is full, essentially bijective, and commutes with the categorical action of the Kac-Moody algebra 
given by ${A_n}$. 
Continuing recursively we end up in the category of one-dimensional modules over a collection of 
one-dimen\-sio\-nal algebras $R_{A_0}^{\mu'}$ 
which are labeled by certain sequences of partitions 
$(\lambda,\lambda^{(n-1)},\dotsc ,\lambda^{(1)})$,
each $\lambda^{(i)}$ being a partition with exactly $i$ parts.   
There is a categorical action of the Kac-Moody algebra 
of ${A_n}$ on the functors $R_{A_{n+1}}^\lambda-\prmod\to R^{\mu}_{A_0}-\prmod \cong \Bbbk-\prmod$ 
which 
descends to an action on the Grothendieck group, which means that these functors categorify 
the elements of the Gelfand-Tsetlin basis.  
These functors can be interpreted as the preimages under $\Pi$ of the one-dimensional modules over $R_{A_0}^{\mu'}$, giving 
a realization of the Gelfand-Tsetlin basis in terms of some special objects in the category of modules over 
$R_{A_n}^\lambda$. 

\medskip

One consequence of the categorical branching rules is that we can use it to 
provide an easy proof of Khovanov and Lauda's cyclotomic conjecture from~\cite{KL1}. 
As another application of the categorical branching rules we prove a conjecture in~\cite{MSV} 
about categorical Weyl modules for the $q$-Schur algebra. 
Namely we prove that the cyclotomic KLR algebra is isomorphic to a certain endomorphism algebra 
constructed in~\cite{MSV} 
as part of the $q$-Schur categorification 
to give a conjectural categorification of the Weyl module $W_\lambda$.  
As a consequence we obtain that the aforementioned endomorphism algebra indeed categorifies 
$W_\lambda$, this way proving a second conjecture in~\cite{MSV}. 

\medskip

This paper was motivated by an attempt to lift the recursive formulas 
for link polynomials in~\cite{Jaeg} and~\cite{VW} 
to statements between the corresponding link homology theories  
(see~\cite{VW,wagner} for further explanations and~\cite{vaz} for developments).
This is the first output of the program outlined in~\cite{vaz}. 
%
%\medskip
%The outline of the paper is the following.
%
%In Section~\ref{sec:basics} we ...
%
%
%
%\medskip
%
We have tried to make this paper reasonably self-contained with the exception of 
Section~\ref{sec:appl} where we assume familiarity
with~\cite{MSV}.

%
%%%%%%%%%%%%%%%%%%%%%%%%%%%%%%%%%%%%%%%%
%%%                                  %%%
%%%       Branching  rules           %%%
%%%                                  %%%
%%%%%%%%%%%%%%%%%%%%%%%%%%%%%%%%%%%%%%%%
\section{Quantum $\sll$, the branching rule and the Gelfand-Tsetlin basis}\label{sec:basics}

In this section we review the basics about quantum $\sll$, its irreducible representations,  
the branching rule for $\sll\supset \sln$, and the Gelfand-Tsetlin basis.
We also fix notation and recollect some results that will be used in this paper.

%%%%%%%%%%%%%%%%%%%%%%%%%%%%%%%%%%%%%%%%%%%%%%%
\subsection{Quantum $\sll$ and its irreducible representations}\label{ssec:sln}

We denote the weight lattice $\Lambda^{n+1}$ and the root lattice $X^{n+1}$. 
Let $\alpha_1,\dotsc ,\alpha_n$ be the simple roots and 
$\alpha_1^\vee,\dotsc ,\alpha_n^\vee$ the coroots. 
Any weight $\llambda$ can be written as $\llambda = (\llambda_1,\dotsc , \llambda_n)$
where $\llambda_i= \alpha_i^\vee(\llambda)$. Denote the set of dominant integral weights by 
\begin{equation*}
\Lambda_+^{n+1} = \bigl\{ \llambda\in\Lambda^{n+1}\vert \alpha_i^\vee(\llambda)\in\bZ_{\geq 0}\text{ for all }i=1,\dotsc ,n \bigr\}. 
\end{equation*}
Let also 
\begin{equation*}
a_{ij}=\alpha_i^\vee(\alpha_j) =
\begin{cases}
2  &\text{if }j=i\\
-1 &\text{if }j=i\pm1\\
0 & \text{else }
\end{cases}
\end{equation*}
be the entries of the Cartan matrix of $\sll$. 

\medskip

%\begin{defn} 
The {\em quantum special linear algebra} 
$U_q(\sll)$ is 
the associative unital $\bQ(q)$-algebra generated by the Chevalley generators $F_i$, $E_i$ and 
$K_i^{\pm 1}$, for $1,\ldots, n$, 
subject to the relations
\begin{gather*}
K_iK_j=K_jK_i\mspace{30mu} K_iK_i^{-1}=K_i^{-1}K_i=1
\\
K_iF_jK_i^{-1} = q^{-a_{ij}}F_j \mspace{40mu} K_iE_jK_i^{-1} = q^{a_{ij}}E_j
\\
E_iF_j - F_jE_i = \delta_{i,j}\dfrac{K_i - K_i^{-1} }{q-q^{-1}}
\\
F_i^2F_j - (q+q^{-1})F_iF_jF_i + F_jF_i^2 = 0
\qquad\text{if}\quad |i-j|=1
\\
E_i^2E_j - (q+q^{-1})E_iE_jE_i + E_jE_i^2 = 0
\qquad\text{if}\quad |i-j|=1
\\
F_iF_j = F_jF_i , \mspace{10mu} E_iE_j = E_jE_i\qquad\text{if}\quad |i-j|>1.
\end{gather*} 
%\end{defn}

\medskip

For $\und{i}=(i_1,\dotsc , i_k)$ we define $F_{\und{i}} = F_{i_k}\dotsm F_{i_1}$ 
and $E_{\und{i}} = E_{i_k}\dotsm E_{i_1}$. 
The reason for this convention will be clear later when we introduce the diagrammatics. 

\medskip

\n The \emph{lower half} $U^-(\sll)\subset U_q(\sll)$ quantum algebra is the subalgebra generated by the $F_i$s 
(analogously for the \emph{upper half} $U^+(\sll)$).

\medskip

Recall that a subspace $V_\lmu$  of a 
finite dimensional $U_q(\sll)$-module $V$ is called a \emph{weight space} if
\begin{equation*}
K_i^{\pm 1}v = q^{\pm \lmu_i}v
\end{equation*}
for all $v\in V_\lmu$ and that $V$ is called a \emph{weight module} if
\begin{equation*}
V = \bigoplus\limits_{\lmu\in\Lambda^{n+1}}V_{\lmu} . 
\end{equation*}

A weight module $V$ is called a \emph{highest weight module with highest weight $\llambda$} if 
there exists a nonzero weight vector vector $v_\llambda\in V_\llambda$ such that $E_iv_\llambda=0$ for $i=1,\dotsc ,n$. 
For each $\llambda\in\Lambda_+^{n+1}$ there exists a unique irreducible highest 
weight module with highest weight $\llambda$. 
In the sequel we will drop the $U_q$ and write $\sll$-module instead of  $U_q(\sll)$-module.

\medskip

Let $\phi$ be the anti-involution on $U_q(\sll)$ defined by
\begin{equation*}
\phi(K_i^{\pm 1})=K_i^{\mp 1}
\mspace{20mu}
\phi(F_i)=q^{-1}K_iE_i
\mspace{20mu}
\phi(E_i)=q^{-1}K_i^{-1}F_i .
\end{equation*}
The \emph{$q$-Shapovalov form} $\langle-,-\rangle$ is the unique nondegenerate symmetric bilinear form 
on the highest weight module $V(\llambda)$ satisfying 
\begin{align*}
\langle v_\llambda,  v_\llambda \rangle &= 1
\\
\langle uv,v'\rangle &=  \langle v,\phi(u)v'\rangle \text{\quad for all $u\in U_q(\sll)$ and $v,v'\in V(\lambda)$}
\\
f\langle v,v'\rangle &=  \langle\bar{f}v,v'\rangle = \langle v,fv'\rangle \text{\quad for any $f\in\bQ(q)$ and $v,v'\in V(\lambda)$}.
\end{align*}

\medskip

%In the Beilinson-Lusztig-MacPherson~\cite{B-L-M} 
%idempotented version of quantum groups, 
%the Cartan subalgebras are 
%``replaced'' by algebras generated by 
%orthogonal idempotents corresponding to the weights. 
%To understand their definition, recall that $K_i$ acts as $q^{\lambda_i}$ on the 
%$\lambda$-weight space of any weight representation. 
%For each $\lambda\in\Lambda^{n+1}$ adjoin an idempotent $1_{\lambda}$ to 
%$U_q(\mathfrak{sl}_{n+1})$ and add 
%the relations
%\begin{align*}
%1_{\lambda}1_{\mu} &= \delta_{\lambda,\nu}1_{\lambda}   
%\\
%F_{i}1_{\lambda} &= 1_{\lambda - \alpha_i}F_{i}
%\\
%E_{i}1_{\lambda} &= 1_{\lambda + \alpha_i}E_{i}
%\\
%K_i1_{\lambda} &= q^{\lambda_i}1_{\lambda}.
%\end{align*}
%\begin{defn} 
%\label{defn:Uglndot}
%The idempotented quantum $\mathfrak{sl}_{n+1}$ is defined by 
%$$\dot{U}_q(\mathfrak{sl}_{n+1})=\bigoplus_{\lambda,\mu\in\Lambda^{n+1}}1_{\lambda}{U}_q(\mathfrak{sl}_{n+1})1_{\mu}.$$
%\end{defn}

%%%%%%%%%%%%%%%%%%%%%%%%%%%%%%%%%%%%%%%%%%%%%%%%%%%%%
\subsection{The $q$-Schur algebra}\label{ssec:schur}
%%%%%%%%%%%%%%%%%%%%%%%%%%%%%%%%%%%%%%%%%%%%%%%%%%%%%

In this subsection we give a brief review the $q$-Schur algebra $S_q(n,d)$ following the exposition in~\cite{MSV} 
(see~\cite{MSV} and the references therein for more details). 
The Schur algebra appears naturally in the context of (polynomial) representations of ${U}_q(\mathfrak{gl}_n)$,
which is the starting point of this subsection.
The root and weight lattices are very easy to describe for quantum $\mathfrak{gl}_n$.
Let 
$\epsilon_i=(0,\ldots,1,\ldots,0)\in \bZ^n$, with $1$ being on the $i$th 
coordinate for $i=1,\ldots,n$.
Let also
$\widetilde{\alpha}_i=\epsilon_i-\epsilon_{i+1}\in\bZ^{n}$ 
and $(\epsilon_i,\epsilon_j)=\delta_{i,j}$ be the Euclidean inner product on $\bZ^n$ 
(in this basis the $\mathfrak{sl}_{n}$ roots can be expressed by $\alpha_i = \widetilde{\alpha}_i-\widetilde{\alpha}_{i+1}$).

\medskip

%\begin{defn}
The quantum general linear algebra ${U}_q(\mathfrak{gl}_n)$ is the 
associative unital $\bQ(q)$-algebra generated by $K_i,K_i^{-1}$, for $i=1,\ldots, n$, 
and $F_i$, $E_{i}$, for $i=1,\ldots, n-1$, subject to the relations
\begin{gather*}
K_iK_j=K_jK_i\quad K_iK_i^{-1}=K_i^{-1}K_i=1
\\[1.5ex]
E_iF_{j} - F_{j}E_i = \delta_{i,j}\dfrac{K_iK_{i+1}^{-1}-K_i^{-1}K_{i+1}}{q-q^{-1}}
\\[1.5ex]
K_iF_{j}=q^{-(\epsilon_i,\alpha_j)}F_{j}K_i
\mspace{50mu} 
K_iE_{j}=q^{ (\epsilon_i,\alpha_j)}E_{ j}K_i
\end{gather*}
and
\begin{gather*}
\left.\begin{aligned}
F_{i}^2F_{j}-(q+q^{-1})F_{i}F_{j}F_{i}+F_{j}F_{i}^2 &= 0 
\\
E_{i}^2E_{j}-(q+q^{-1})E_{i}E_{j}E_{i}+E_{j}E_{i}^2 &= 0 
\end{aligned}\;\right\}
\qquad\text{if}\quad |i-j|=1
\\[1.5ex]
\left.\begin{aligned}
F_{i}F_{j}-F_{j}F_{i}=0
\\
E_{i}E_{j}-E_{j}E_{i}=0 
\end{aligned}\;\right\}
\qquad\text{if}\quad |i-j|>1. 
\end{gather*} 
%\end{defn}

\medskip

%

%We denote by $\psi_{n,d}$ the representation above.
%
%We can then define $S_q(n,d)$ as follows:
%\begin{defn}
%The $q$-Schur algebra $S_q(n,d)$ is the image of the representation $\psi_{n,d}$,
%$$S_q(n,d) = \psi_{n,d}({U}_q(\mathfrak{gl}_n)).$$ 
%\end{defn}
%
%\smallskip
%
%It is well know that there an action of the Iwahori-Hecke algebra $H_d(q)$ 
%on $V^{\otimes d}$ commuting with the action of ${U}_q(\mathfrak{gl}_n)$.
%and if $d\leq n$
%\begin{equation*}
%H_q(d) \cong \End_{{U_q}(\mathfrak{gl}_n)}(V^{\otimes d}) 
%\end{equation*}
%As a matter of fact, we have
%\begin{equation*}
%S_q(n,d) \cong \End_{H_d(q)}(V^{\otimes d}) , 
%\end{equation*}
%and this may also be used to define the $q$-Schur algebra as a centralizer algebra.

%For each $\lambda\in\Lambda^+(n,d)$, the 
%${U}_q(\mathfrak{gl}_n)$-action on $V_{\lambda}$ factors through 
%the projection 
%$$\psi_{n,d}\colon {U}_q(\mathfrak{gl}_n)\to S_q(n,d).$$
%This way we obtain all irreducible representations of $S_q(n,d)$. Note that 
%this also implies that all representations of $S_q(n,d)$ have a 
%weight decomposition. 
%It is well known that 
%$$S_q(n,d)\cong \prod_{\lambda\in\Lambda^+(n,d)}\End(V_{\lambda}),$$
%and therefore $S_q(n,d)$ is a finite-dimensional split semi-simple 
%unital algebra. %Its dimension is equal to $$\sum_{\lambda\in\Lambda^+(n,d)}\dim(V_{\lambda})^2=\binom{n^2+d-1}{d}.$$ 

%Since we are only interested in weight representations we can restrict our attention
%to 
In the Beilinson-Lusztig-MacPherson~\cite{B-L-M} 
idempotented version of quantum groups, 
the Cartan subalgebras are 
``replaced'' by algebras generated by 
orthogonal idempotents corresponding to the weights. 
To understand their definition, recall that $K_i$ acts as $q^{\lambda_i}$ on the 
$\lambda$-weight space of any weight representation. 
The idempotented version  
of ${U}_q(\mathfrak{gl}_n)$  
can be obtained 
from ${U}_q(\mathfrak{gl}_n)$ by adjoining orthogonal idempotents 
$1_{\lambda}$, for $\lambda\in\bZ^{n}$, and adding the relations
\begin{align*}
1_{\lambda}1_{\nu} &= \delta_{\lambda,\nu}1_{\mu} 
\\
F_{i}1_{\lambda} &= 1_{\lambda - {\widetilde{\alpha}_i}}F_{i}
\\
E_{i}1_{\lambda} &= 1_{\lambda + {\widetilde{\alpha}_i}}E_{i}
\\
K_i1_{\lambda} &= q^{\lambda_i}1_{\lambda}.
\end{align*}

%\begin{defn}
\n The \emph{idempotent quantum $\mathfrak{gl}_n$} is then defined by 
\begin{equation*}
\dot{U}(\mathfrak{gl}_n)\cong \underset{\lambda,\mu\in\bZ^{n}}{\oplus} 
1_{\lambda}{U}_q(\mathfrak{gl}_n)1_{\mu}  . 
\end{equation*}
%\end{defn}
Note that $\dot{U}(\mathfrak{gl}_n)$  
is not unital anymore because $1=\sum\limits_{\lambda\in\bZ^{n}}1_{\lambda}$ would be 
an infinite sum. 
In this setting the $q$-Schur algebra occurs naturally as a quotient
of idempotent ${U}_q(\mathfrak{gl}_n)$, which happens to be very easy to describe.
Let  
\begin{equation*}
\Lambda(n,d) =
\bigl\{ \lambda\in\bN^n\colon\sum_i\lambda_i = d \bigr\}
\end{equation*}
be a weight (sub)lattice and the highest weights be elements in
\begin{equation*}
\Lambda^+(n,d) =
\{ \lambda\in\Lambda(n,d)\colon\lambda_1 \geq \lambda_2\geq\dotsc \geq \lambda_n \}. 
\end{equation*}

The $q$-Schur algebra $S_q(n,d)$ can be defined as the quotient of 
idempotented quantum $\mathfrak{gl}_n$ 
by the ideal generated by all idempotents 
$1_{\lambda}$ such that $\lambda\not\in\Lambda(n,d)$. 
Thus we have   
a finite presentation of $S_q(n,d)$ as 
the associative unital 
$\bQ(q)$-algebra generated by $1_{\lambda}$, for $\lambda\in\Lambda(n,d)$, 
and $F_{i}$, $E_{i}$, for $i=1,\ldots,n-1$, subject to the relations
\begin{align*}
1_{\lambda}1_{\mu} &= \delta_{\lambda,\mu}1_{\lambda} 
\\[0.5ex]
\sum_{\lambda\in\Lambda(n,d)}1_{\lambda} &= 1
\\[0.5ex]
F_{i}1_{\lambda} &= 1_{\lambda - \widetilde{\alpha}_i}F_{i} 
\\[0.5ex]
E_{i}1_{\lambda} &= 1_{\lambda + \widetilde{\alpha}_i}E_{i} 
\\[0.5ex]
E_iE_{-j}-E_{-j}E_i &= \delta_{ij}\sum\limits_{\lambda\in\Lambda(n,d)}
[\lambda_i - \lambda_{i+1}]1_{\lambda}
\end{align*}
We use the convention that $1_{\mu}X1_{\nu}=0$, if $\mu$ 
or $\nu$ is not contained in $\Lambda(n,d)$. 
%\end{defn}

\medskip 

The irreducibles $W_{\lambda}$, 
for $\lambda\in\Lambda^+(n,d)$, can be 
constructed as subquotients of $S_q(n,d)$, called \emph{Weyl modules}. Let $<$ denote  
the lexicographic order on $\Lambda(n,d)$.  
%\begin{lem} 
%\label{lem:weyl}
For any $\lambda\in\Lambda^+(n,d)$, we have 
$$W_{\lambda}\cong 1_{\lambda}S_q(n,d)/[\mu>\lambda].$$
Here $[\mu>\lambda]$ is the ideal generated by all elements of the form 
$1_{\mu}x1_{\lambda}$, for some $x\in S_q(n,d)$ and $\mu>\lambda$.
%\end{lem} 

%%%%%%%%%%%%%%%%%%%%%%%%%%%%%%%%%%%%%%%%%%%%%%%
\subsection{Branching rules}\label{ssec:brules}

Recall that a partition with $m$ parts is a sequence of nonnegative integers
$(\lambda_1,\dotsc ,\lambda_{m})$ with $\lambda_1\geq\dotsm\geq\lambda_{m}$. 
Partitions are in bijection with Young diagrams. 
We follow the convention where Young diagrams are left justified and lines are enumerated from top to bottom. 
The bijection sends $\lambda$ to the Young diagram with 
$\lambda_i$ boxes in the $i$th line. From now on we denote them by the same symbols.

There is a well known relation between integral dominant weights of $\sll$ and 
partitions with $n+1$ parts. 
For each such partition $\lambda$ there is an integral dominant weight $\llambda\in\Lambda^{n+1}_+$  
defined by 
\begin{equation*}
\llambda_i := \lambda_i - \lambda_{i+1} . 
\end{equation*}
If we want to use partitions to describe 
the finite dimensional irreducibles of $\sll$ 
we can write $V^\sll_\lambda$ to denote the irreducible $\sll$-module $V_\llambda$ without any ambiguity.  
Of course there are several partitions giving the same element of $\Lambda_+^{n+1}$, 
but there is only one if we fix the value of $\lambda_{n+1}=0$. 

\medskip

For a partition $\lambda$ with $n+1$ parts 
denote by $\tau(\lambda)$ the set of all partitions $\mu$ with $n$ parts satisfying
\begin{equation}\label{eq:inbetw}
\lambda_i \leq \mu_i \leq \lambda_{i+1} . 
\end{equation}

We denote by $V^{\mathfrak{sl}_m}_{\lambda}$ the irreducible finite dimensional representation of $\mathfrak{sl}_m$ of 
highest weight $\lambda$.  
For the embedding $\sln\hookrightarrow \sll$ corresponding to 
adding one vertex to the Dynkin diagram of $\sln$ 
the branching rule~\cite{IN} says that   
\begin{equation}
\label{eq:brulsalg}
V^{\sll}_{\lambda} 
\cong 
\bigoplus\limits_{\mu\in\tau(\lambda)} V_{\mu}^{\sln} 
\end{equation}
is an isomorphism of $\sln$-modules. 
This decomposition is multiplicity free that is, each of the $V_{\lmu}^\sln=V_{\mu}^\sln$ occurs 
at most once in the sum~\eqref{eq:brulsalg}.

Define $\tau_{k}(\lambda)$ as the set of all the Young diagrams obtained from $\lambda$ by the removal of $k$ boxes, 
no more than one from each column.  
The inbetweenness condition~\eqref{eq:inbetw} is 
the same as requiring that $\mu$ is in exactly one of the $\tau_k(\lambda)$ for some $k$.  
In other words,  
\begin{equation*}
\tau(\lambda) = \bigoplus\limits_{k\geq 0}\tau_k(\lambda) .
\end{equation*}

Let $\bar\tau_k(\lambda)$ be the set of all the $\lmu\in\Lambda_+^{n}$ for $\mu\in\tau_k(\lambda)$  
and for $1\leq i_1\leq\dotsm \leq i_k\leq n+1$
denote by $\lmu(i_1\dotsm i_k)\in\bar\tau_k(\lambda)$ the weight obtained by removal of 
exactly one box 
from each of the $i_1,\dotsc ,i_k$th lines of $\lambda$, in the order given.   

For practical purposes the set $\bar\tau_k(\lambda)$ is best described using maps  
$\Lambda^{n+1}\to\Lambda^{n+1}$ and $\Lambda^{n+1}\to\Lambda^{n{}}$. 
For $p_\Lambda\colon\Lambda^{n+1}\to\Lambda^{n}$ the projection 
\begin{equation}\label{eq:projW}
p_\Lambda(\lmu_1,\dotsc ,\lmu_n) = (\lmu_1,\dotsc ,\lmu_{n-1})
\end{equation}
and $(i_1,\dotsc ,i_k)$ as above
we define $\xi'_{i_1\dotsm i_k}\colon\Lambda^{n+1}\to\Lambda^{n+1}$  
and
$\xi_{i_1\dotsm i_k}\colon\Lambda^{n+1}\to\Lambda^{n{}}$  
by  
\begin{align*}
\xi'_{i}(\llambda) &= (\llambda_1,\dotsc, \llambda_{i-1}+1, \llambda_{ i }-1,\dotsc , \llambda_{n-1},\llambda_n)
\\
\xi'_{i_1\dotsm i_k}(\llambda) &= \xi'_{i_k}\dotsm\xi'_{i_1}(\llambda) 
\end{align*} 
and $\xi_i(\llambda) = p_\Lambda\xi'_i$.  
We say that $\xi_i$ is \emph{$\lambda$-dominant} if $\xi_i(\llambda)$ is in $\Lambda_+^n$ 
whenever $\llambda$ is in $\Lambda_+^{n+1}$ and that $\xi_{i_1\dotsm i_k}$ is $\lambda$-dominant 
if for each $j\leq k$ 
the map $\xi_{i_1\dotsm i_j}$ is $\lambda$-dominant.

We have  
\begin{equation*}
\lmu(i_1\dotsm i_k)=\xi_{i_k}\dotsm\xi_{i_1}(\llambda) . 
\end{equation*}  
Keeping this notation in mind we denote by $\xi_i(\lambda)$ 
the partition obtained from $\lambda$ by the removal of one box from its $i$th line 
with $\xi_{i_1\dotsm i_k}(\lambda)$ meaning the one obtained by the removal of $k$ boxes, one for each 
$i_r$th line. 
We see that $\lambda$-positivity of $\xi_{i_1\dotsm i_k}$ is equivalent of the requirement 
that no two boxes are removed from the same column of $\lambda$. 
For later use we denote by $\cD_\lambda^k$ the set of all $\lambda$-dominant $\xi_{i_1\dotsm i_k}$s
and define $\cD_\lambda=\medcup_{k\geq 0}\cD_\lambda^k$. 
This way $\tau(\lambda)$ can be also seen as 
the set of all the $\xi_{i_1\dotsm i_j}(\lambda)$ with $\xi_{i_1\dotsm i_j}$ in $\cD_\lambda$.

%%%%%%%%%%%%%%%%%%%%%%%%%%%%%%%%%%%%%%%%%%%%%%%%%%%%%
\subsection{The Gelfand-Tsetlin basis}\label{ssec:GT}

We can 
reapply the branching rule~\eqref{eq:brulsalg} recursively until 
we end up with a direct sum of 1-dimensional spaces 
corresponding to a final decomposition of each irreducible of $\mathfrak{sl}_2$ 
into 1-dimensional $\bQ(q)$-vector spaces.  

\medskip

We say a sequence $(\mu^{(n+1)},\dotsc ,\mu^{(1)})$  of partitions, where $\mu^{(j)}$ has $j$ parts,   
is a \emph{Gelfand-Tsetlin pattern for $\mu^{(n+1)}$} if each consecutive pair $(\mu^{(j)},\mu^{(j-1)})$ 
satisfy the inbetweenness condition~\eqref{eq:inbetw}. 
Denote by $\cS(\lambda)$ the set of all the Gelfand-Tsetlin patterns for $\lambda$.  

\medskip

The Gelfand-Tsetlin patterns for $\lambda$ are the paths followed in the sequence of weight lattices 
\begin{equation*}
\Lambda_+^{n+1} \to\Lambda_+^{n} \to\dotsm\to \Lambda_+^{1}=\bN_0 
\end{equation*}
in going from 
$V^\sll_\lambda$ to each of the 1-dimensional spaces occurring at the end. 
Since the decomposition~\eqref{eq:brulsalg} is multiplicity free there is a 1-1 correspondence 
between $\cS(\lambda)$ and the set of all these 1-dimensional spaces. 
Let $V_{\cS(\lambda)}$ be the $\bQ(q)$-linear spanned by $\cS(\lambda)$. 
We write $\ket{s}$ for a Gelfand-Tsetlin pattern $s$ seen as an element of $V_{\cS(\lambda)}$. 
It turns out that it is isomorphic to  $V_\lambda^{\sll}$ not only as a vector space but as 
$\sll$-modules. 

The $\sll$-action on $V_{\cS(\lambda)}$ can be obtained through a procedure which, in some sense, 
is the reverse of the direct sum decomposition~\eqref{eq:brulsalg} using the branching rule.   
Whilst the generators $\{E_i,F_i\}_{i\in\{1,\dotsc ,n-1\}}$ preserve the weight spaces 
$V_\mu^\sln$
on the right-hand side of~\eqref{eq:brulsalg},   
the generators $E_n$ and $F_n$ move between the different $V_\mu^\sln$: 
let $\phi$ be the isomorphism $V_\lambda^\sll\to\oplus_{\mu\in\tau(\lambda)} V_{\mu}^{\sln}$ 
in~\eqref{eq:brulsalg}. 
Then the $\sln$-action on $\oplus_{\mu\in\tau(\lambda)} V_{\mu}^{\sln}$ extends to an $\sll$-action 
if we define 
\begin{equation*}
E_nv := \phi E_n\phi^{-1} v
\mspace{25mu}\text{and}\mspace{25mu}
F_nv := \phi F_n\phi^{-1} v
\end{equation*}
for $v\in\oplus_{\mu\in\tau(\lambda)} V_{\mu}^{\sln}$. 
This is a consequence of $\phi E_i\phi^{-1} v = E_i v$ and $\phi F_i\phi^{-1} v = F_i v$ 
for all $v\in\oplus_{\mu\in\tau(\lambda)} V_{\mu}^{\sln}$. 
We can continue this procedure until we get the desired 1-dimensional spaces 
and regard the $\sll$-action on them as an action on $V_{\cS(\lambda)}$. 
The basis of $V_{\cS(\lambda)}$ given by the Gelfand-Tsetlin patterns  
is called the Gelfand-Tsetlin basis for $V_\lambda^\sll$.  
This basis was first defined by I.~M.~Gelfand and M.~L.~Tsetlin in~\cite{GT1} for the Lie algebra $gl(n)$. 
The explicit form of action of the generators of the Lie algebra 
$gl(n)$ on the Gelfand-Tsetlin basis can be found for example in~\cite{molev,zelobenko}.

%
%%%%%%%%%%%%%%%%%%%%%%%%%%%%%%%%%%%%%%%%
%%%                                  %%%
%%%       KLR Algebras               %%%
%%%                                  %%%
%%%%%%%%%%%%%%%%%%%%%%%%%%%%%%%%%%%%%%%%
\section{KLR algebras and their cyclotomic quotients}\label{sec:KLRalgebras}

In this section we describe the quiver Hecke algebras which were introduced by Khovanov and Lauda in~\cite{KL1} 
and independently by Rouquier in~\cite{Rouq1}. We concentrate on the particular case of type $A_n$.    
The KLR algebra $R_{n+1}$ associated to the quiver $A_n$ 
is the algebra generated by $\Bbbk$-linear combinations of isotopy classes of 
braid-like planar diagrams where each strand is labeled by a simple root of $\sll$. 
Strands can intersect transversely to form crossings and they can also carry dots. 
Multiplication is given by concatenation of diagrams and the collection of such diagrams is subject 
to relations~\eqref{eq:R2}-\eqref{eq:dotslide} below  
(for the sake of simplicity we write $i$ instead $\alpha_i$ when labeling a strand). 
We read diagrams from bottom to top by convention and therefore the diagram 
for the product $a.b$ is the diagram obtained by stacking the diagram for $a$ 
on the top of the one for $b$. 
%%%%%%%%%% R2 move %%%%%%%%%%%%%
\begin{equation}\label{eq:R2}
\labellist
\tiny \hair 2pt
\pinlabel $i$ at -5 0
\pinlabel $j$ at 61 -2
\endlabellist
\figins{-23}{0.7}{R2}\ \
=\ \
\begin{cases}
\mspace{35mu}0 
&\mspace{20mu}\text{ if }i = j  
\\[1ex]
\mspace{20mu}
\labellist
\tiny \hair 2pt
\pinlabel $i$ at -5 0
\pinlabel $j$ at 61 -2
\endlabellist 
\figins{-23}{0.7}{id2-dl}
\quad +\quad 
\labellist
\tiny \hair 2pt
\pinlabel $i$ at -5 0
\pinlabel $j$ at 61 -2
\endlabellist 
\figins{-23}{0.7}{id2-dr}
&\mspace{20mu}\text{ if }j = i \pm 1 
\\[6ex]
\mspace{20mu}
\labellist
\tiny \hair 2pt
\pinlabel $i$ at -5 0
\pinlabel $j$ at 61 -2
\endlabellist 
\figins{-23}{0.7}{id2}
&\mspace{20mu}\text{ else }
\end{cases}
\end{equation}

\medskip
%%%%%%%%%% R3 moves %%%%%%%%%%%%%
%
\begin{equation}\label{eq:R3}
\labellist
\tiny \hair 2pt
\pinlabel $i$ at  -5 0
\pinlabel $j$ at  72 -2
\pinlabel $k$ at 132 -2
\endlabellist
\figins{-23}{0.7}{R3-l}
\ \ - \ \
\labellist
\tiny \hair 2pt
\pinlabel $i$ at  -5 0
\pinlabel $j$ at  52 -2
\pinlabel $k$ at 132 -2
\endlabellist
\figins{-23}{0.7}{R3-r}
\ \ = \ \ \ 
\begin{cases}
\mspace{20mu} 
\labellist
\tiny \hair 2pt
\pinlabel $i$ at  -5 0
\pinlabel $j$ at  68 -2
\pinlabel $k$ at 124 -2
\endlabellist
\figins{-23}{0.7}{id3} & \text{ if }i=k=j\pm 1
\\[8ex]
\mspace{55mu}
0 & \text{ else}
\end{cases}
\end{equation}

\medskip
%%%%%%%%%% dot moves %%%%%%%%%%%%%
%
\begin{align}
\label{eq:dotslide}
\labellist
\tiny \hair 2pt
\pinlabel $i$ at -5   0
\pinlabel $j$ at 97  -2
\endlabellist
\figins{-20}{0.60}{dotslide-lu}
\ \ - \ \
\labellist
\tiny \hair 2pt
\pinlabel $i$ at -5   0
\pinlabel $j$ at 97  -2
\endlabellist
\figins{-20}{0.60}{dotslide-rd}
\ \ = \ \ 
\delta_{ij}\ \ \   
\labellist
\tiny \hair 2pt
\pinlabel $i$ at -5   0
\pinlabel $j$ at 97  -2
\endlabellist
\figins{-20}{0.60}{id2larger}
\quad = \quad  
\labellist
\tiny \hair 2pt
\pinlabel $i$ at -5   0
\pinlabel $j$ at 97  -2
\endlabellist
\figins{-20}{0.60}{dotslide-ld}
\ \ - \ \
\labellist
\tiny \hair 2pt
\pinlabel $i$ at -5   0
\pinlabel $j$ at 97  -2
\endlabellist
\figins{-20}{0.60}{dotslide-ru}
\vspace*{4ex}
\end{align}
%%%%%%%%%% dots slides

\medskip

Algebra $R_{n+1}$ is graded with the degrees given by 
\begin{align}
\deg\biggl(\  
\labellist
\tiny \hair 2pt
\pinlabel $i$ at -5   0
\pinlabel $j$ at 97  -2
\endlabellist
\figins{-10}{0.40}{Xing}\ 
\biggr) = -a_{ij}
\mspace{60mu}
\deg\biggl(\  
\labellist
\tiny \hair 2pt
\pinlabel $i$ at -5   0
\endlabellist
\figins{-10}{0.40}{dot}\ 
\biggr) = a_{ii} .
\vspace*{4ex}
\end{align}

\medskip

\n The following useful relation follows from~\eqref{eq:dotslide} and will be used in the sequel.
\begin{align}
\label{eq:dotslides}
\labellist
\tiny \hair 2pt
\pinlabel $i$ at -5 0  \pinlabel $i$ at 97 0 \pinlabel $d$ at 0 90
\endlabellist
\figins{-20}{0.60}{dotslide-lu}
\ \ - \ \
\labellist
\tiny \hair 2pt
\pinlabel $i$ at -5 0 \pinlabel $i$ at 97 0 \pinlabel $d$ at 90 35
\endlabellist
\figins{-20}{0.60}{dotslide-rd}
\ \ = \ \ 
\sum_{\ell_1+\ell_2 = d-1}\ \ \   
\labellist
\tiny \hair 2pt
\pinlabel $i$ at -5 0 \pinlabel $i$ at 97 0 
\pinlabel $\ell_1$ at 32 70 \pinlabel $\ell_2$ at 103 70
\endlabellist
\figins{-20}{0.60}{id2largerd}
\quad = \quad  
\labellist
\tiny \hair 2pt
\pinlabel $i$ at -5 0 \pinlabel $i$ at 97 0 \pinlabel $d$ at 0 35
\endlabellist
\figins{-20}{0.60}{dotslide-ld}
\ \ - \ \
\labellist
\tiny \hair 2pt
\pinlabel $i$ at -5 0 \pinlabel $i$ at 97 0 \pinlabel $d$ at 90 90
\endlabellist
\figins{-20}{0.60}{dotslide-ru}
\vspace{4ex}
\end{align}

\medskip

Let $\beta = \sum\limits_{j=1}^{n}\beta_i \alpha_i$ and let $R_{n+1}(\beta)$ be the subalgebra generated by all diagrams 
of $R_{n+1}$ containing exactly $\beta_i$ strands labeled $i$.
We have  
\begin{equation*}
R_{n+1} = \sum\limits_{\beta\in\Lambda_+^{n}}R_{n+1}(\beta) .
\end{equation*}
We also denote by
\begin{equation*}
R_{n+1}(k\alpha_n) = \bigoplus\limits_{\beta'\in\Lambda_+^{n-1}} R_{n+1}(\beta' + k\alpha_n) 
\end{equation*}
the subalgebra of $R_{n+1}$ containing exactly $k$ strands labeled $n$. 
With this notation we have 
\begin{equation}\label{eq:decKLR1}
R_{n+1} = \bigoplus\limits_{k\geq 0}R_{n+1}(k\alpha_n)  .
\end{equation}
%Sometimes its convenient to use an algebraic notation. 
For a sequence $\und{i}=(i_1,\dotsc , i_k)$ with $i_j$ corresponding to the simple root $\alpha_{i_j}$ we write 
$1_{\und{i}}$ for the idempotent formed by $k$ vertical strands with labels 
in the order given by $\und{i}$,
\begin{equation*}
1_{\und{i}}\ = 
\labellist
\pinlabel $\dotsc$ at  145 65   
\tiny \hair 2pt
\pinlabel $i_1$   at   2 12
\pinlabel $i_2$   at  46 12
\pinlabel $i_3$   at  88 12
\pinlabel $i_k$   at 198 12
\endlabellist
\mspace{15mu}
\figins{-34}{0.9}{idan} 
\end{equation*}
%
%We use the notation $1_{i_1\dotsm i_k}$ for the case when $i_{r+1}=i_r+1$ for $r=1,\dotsc, k-1$ in the $1_{\und{i}}$ as above.  
We write $1_{*\ell *}$ for $1_{\und{i}}$ if the sequence of 
labels $\und{i}=\und{j}\und{\ell}\und{j}'$ can be written as a concatenation of sequences 
and we are only interested in the $\und{\ell}$ part.
We also write $x_{r,\und{i}}$ for the diagram consisting of a dot on the $r$th strand of $1_{\und{i}}$,
\begin{equation*}
x_{r,\und{i}}\ = 
\labellist
\pinlabel $\dotsc$ at   45 50   
\pinlabel $\dotsc$ at  135 50   
\tiny \hair 2pt
\pinlabel $i_1$   at   7 -10
\pinlabel $i_r$   at  92 -10
\pinlabel $i_k$   at 178 -10
\endlabellist
\mspace{15mu}
\figins{-19}{0.6}{idandot}
\end{equation*}

\medskip

For $\beta$ as above we denote by $\seq(\beta)$ the set of all sequences $\und{i}$ of simple roots 
in which $i_j$ appears exactly $\beta_j$ times. The identity of $R_{n+1}(\beta)$ is then given by 
\begin{equation*}
1_{R_{n+1}(\beta)} =
\sum_{\und{i}\:\in\:\seq(\beta)}
\labellist
\pinlabel $\dotsc$ at  145 65   
\tiny \hair 2pt
\pinlabel $i_1$   at   2 12
\pinlabel $i_2$   at  46 12
\pinlabel $i_3$   at  88 12
\pinlabel $i_k$   at 198 12
\endlabellist
\mspace{15mu}
\figins{-34}{0.9}{idan} 
\end{equation*}

\medskip

We have
\begin{equation*}
R_{n+1}(\beta) = \bigoplus\limits_{\und{i},\und{j}\:\in\:\seq(\beta)}1_{\und{i}}\ R_{n+1}1_{\und{j}}. 
\end{equation*}
If $e\in R_{n+1}$ is an idempotent then there is a (right) projective module $_{e}P=eR_{n+1}$. 
For $e=1_{\und{i}}$ this is the projective spanned by all diagrams whose labels  
end up in the sequence $\und{i}$.  We can define the left projective $P_e$ in a similar way. 

Denote by $R_{n+1}-\amod$ and $R_{n+1}-\prmod$ the categories of 
graded finitely generated right $R_{n+1}$-modules and of 
graded finitely generated projective right $R_{n+1}$-modules respectively. 
For idempotents $e$, $e'$ we have 
\begin{equation*}
\Hom_{R_{n+1}-\amod}\bigl({}_{e}P, ~{}_{e'\!}P \bigr) = e'R_{n+1}e  . 
\end{equation*}

\medskip

For a graded algebra $A$ we denote by $K'_0(A)$ the Grothendieck group of finitely generated graded projective 
$A$-modules and write $K_0(A)$ for $\bQ(q)\otimes_{\bZ[q,q^{-1}]} K'_0(A)$.  

\smallskip

There is a pair of functors on $R_{n+1}-\amod$ which descend to the Grothendieck group giving it 
the structure of a \emph{twisted bialgebra} (see~\cite{KL1} for the details). 

\begin{thm}[Khovanov-Lauda~\cite{KL1}, Rouquier~\cite{Rouq1}] 
The Grothendieck group $K_0(R_{n+1})$ is isomorphic to the lower half 
$U^-(\sll)$ through the map that takes $[{}_{\und{i}}P]$ to $F_{\und{i}}$. 
\end{thm}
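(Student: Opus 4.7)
\medskip

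\noindent\textbf{Proof proposal.} The plan is to construct mutually inverse maps between $U^-(\sll)$ and $K_0(R_{n+1})$ and verify the claimed correspondence on the generators $F_i \leftrightarrow [{}_iP]$.

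First I would equip $K_0(R_{n+1}) = \bigoplus_{\beta} K_0(R_{n+1}(\beta))$ with an algebra structure via an induction functor. For $\beta,\beta' \in \Lambda_+^n$ there is a non-unital inclusion $R_{n+1}(\beta)\otimes R_{n+1}(\beta') \hookrightarrow R_{n+1}(\beta+\beta')$ realized diagrammatically by placing the diagrams side by side; the induction functor $\Ind_{\beta,\beta'}^{\beta+\beta'}(-) = R_{n+1}(\beta+\beta') \otimes_{R_{n+1}(\beta)\otimes R_{n+1}(\beta')}(-)$ sends projectives to projectives (the induced module of a tensor product of projectives is spanned by diagrams obtained by attaching permutation diagrams on top) and thus endows $K_0(R_{n+1})$ with the structure of an associative $\bZ[q,q^{-1}]$-algebra, where the grading shift provides the $q$-action. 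On classes of projectives this simply realizes concatenation: $[{}_{\und{i}}P]\cdot[{}_{\und{j}}P]=[{}_{\und{i}\und{j}}P]$.

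Next I would define a map $\gamma\colon U^-(\sll) \to K_0(R_{n+1})$ by sending $F_i$ to $[{}_iP]$ and extending multiplicatively. To show this is well-defined I must verify that the quantum Serre relations hold among the classes $[{}_iP]$. For $|i-j|>1$ the relation~\eqref{eq:R2} provides a graded isomorphism ${}_{ij}P \cong {}_{ji}P$, so their classes commute. For $|i-j|=1$ the key identity is
\begin{equation*}
[{}_{iij}P] + [{}_{jii}P] = (q+q^{-1})[{}_{iji}P],
\end{equation*}
which I would prove by exhibiting a short exact sequence of projective $R_{n+1}$-modules relating ${}_{iij}P \oplus {}_{jii}P$ to ${}_{iji}P\{1\} \oplus {}_{iji}P\{-1\}$. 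The existence of such a sequence is extracted from relations~\eqref{eq:R2} and~\eqref{eq:R3}: the relevant idempotents at the triple $iij$, $iji$, $jii$ decompose in $R_{n+1}$ in a way that mirrors the $q$-Serre element, and computing with the dot/crossing generators produces the required filtration.

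Finally I would prove that $\gamma$ is an isomorphism. Surjectivity is immediate since the projectives ${}_{\und{i}}P$ generate $K_0$ (every graded finitely generated projective is a summand of $\bigoplus_{\und{i}} {}_{\und{i}}P\{s\}$), and these are precisely the images of the monomials $F_{\und{i}}$. For injectivity I would introduce a symmetric $\bQ(q)$-bilinear form on $K_0$ via $\langle [P],[Q] \rangle = \dim_q\bigl(\Hom(P,Q)\bigr)$ (with the convention matching $\Hom_{R_{n+1}-\amod}({}_eP,{}_{e'}P)=e'R_{n+1}e$ of the excerpt), and compare it with the $q$-Shapovalov form on $U^-(\sll)$ inherited from $\phi$: $\gamma$ intertwines the two forms on generators, so it is an isometry. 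Since the Shapovalov form is nondegenerate, $\gamma$ must be injective.

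The main obstacle I anticipate is the Serre-relation step. Verifying the categorical identity above requires a careful analysis of short exact sequences of projective modules built from the local relations~\eqref{eq:R2}-\eqref{eq:dotslide}, including tracking the degree shifts produced by dots and crossings; the bookkeeping with the idempotents $1_{\und{i}}$ and the dot-slide relation~\eqref{eq:dotslides} is delicate. Everything else (the induction algebra structure, the pairing, the spanning) is essentially bookkeeping once the Serre step is in place.
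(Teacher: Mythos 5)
The paper does not prove this theorem itself --- it is quoted directly from~\cite{KL1} and~\cite{Rouq1} with no argument given --- so the comparison here is with the cited Khovanov--Lauda proof, and your outline does indeed reconstruct its main skeleton: the induction product on $K_0$, well-definedness of $F_i\mapsto[{}_iP]$ via a categorified Serre relation, surjectivity because the $[{}_{\und{i}}P]$ span, and injectivity via a bilinear form. Two small but worth-flagging corrections. First, for the Serre step you should note that a short exact sequence of graded projectives automatically splits, so what one actually establishes (and what Khovanov--Lauda establish) is a direct sum decomposition ${}_{iij}P \oplus {}_{jii}P \cong {}_{iji}P\{1\}\oplus{}_{iji}P\{-1\}$ coming from explicit mutually orthogonal idempotents built out of~\eqref{eq:R2}--\eqref{eq:dotslide}; phrasing it as an unsplit SES understates what is proved and makes the argument look harder than it is. Second, the form you compare against cannot literally be the $q$-Shapovalov form as defined in this paper, since that form is defined on the irreducible highest weight module $V(\lambda)$, not on $U^-(\sll)$. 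The correct object is Lusztig's nondegenerate bilinear form on the half quantum group (characterized by $(1,1)=1$, $(F_i,F_j)=\delta_{ij}(1-q^2)^{-1}$ up to normalization, and a twisted-coproduct compatibility); $\gamma$ intertwining that form with $\langle[P],[Q]\rangle=\gdim\Hom(P,Q)$ is exactly what gives injectivity because nondegeneracy of Lusztig's form then forces $\ker\gamma=0$. With those two adjustments your proposal is the standard proof.
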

\n This is an isomorphism of twisted bialgebras  
but we do not pursue this direction in this paper.

\medskip

%%%%%%%%%%%%%%%%%%%%%%%%%%%%%%%%%%%%%%%%%%%%%%%%%%%%%%%%%%%%
\subsection{Categorical inclusion and projection for KLR algebras}

Let $\Gamma_n$ and $\Gamma_{n-1}$ the Dynkin diagrams associated to $\sll$ and $\sln$ 
respectively and consider the inclusion $\Gamma_{n-1}\hookrightarrow\Gamma_n$ 
that adds a vertex at the end of $\Gamma_{n-1}$ and the corresponding edge:  
\begin{equation*}
\labellist
\pinlabel $\dotsc$  at 188 60.5    
\tiny \hair 2pt 
\pinlabel $1$   at   7 36  
\pinlabel $2$   at  78 36 
\pinlabel $n-2$ at 287 35 
\pinlabel $n-1$ at 364 35 
\endlabellist
\figins{-19}{0.65}{dynkin}
\mspace{35mu}\hookrightarrow\mspace{35mu}  
\labellist
\pinlabel $\dotsc$  at 188 60.5    
\tiny \hair 2pt 
\pinlabel $1$   at   7 36  
\pinlabel $2$   at  78 36 
\pinlabel $n-2$ at 287 35 
\pinlabel $n-1$ at 364 35 
\pinlabel $n$   at 435 35 
\endlabellist
\figins{-19}{0.65}{dynkinn} 
\end{equation*}
This induces an inclusion of KLR algebras 
\begin{equation*}
\imath\colon R_n\hookrightarrow R_{n+1}
\mspace{50mu}
x\mapsto x
\end{equation*}
which coincides with the obvious map coming from the decomposition
\begin{equation}\label{eq:decKLR2}
R_{n+1} = \bigoplus\limits_{k\geq 0}R_{n+1}(k\alpha_n) 
\cong 
R_n + \bigoplus\limits_{k\geq 1}R_{n+1}(k\alpha_n). 
\end{equation} 
The functors of inclusion and restriction induced by $\imath$ 
\begin{align*}
\Ind_\imath \colon R_n-\amod \to R_{n+1}-\amod
\mspace{60mu}%\\[1ex]
\Res_\imath \colon R_{n+1}-\amod \to R_{n}-\amod
\end{align*}
are biadjoint, take projectives to projectives 
and descend to the natural inclusion and projection maps between the Grothendieck groups. 

To see this we notice that the dual construction takes the 
projection $\rho\colon R_{n+1}\to R_n$ in the decomposition~\eqref{eq:decKLR2} to form the 
functor of restriction of scalars and its left and right adjoints, 
the functors of extension of scalars and coextension of scalars by $\rho$ respectively. 
Recall that $\rho$ endows $R_{n}$ with a structure of $(R_{n+1},R_{n})$-bimodule, where 
the structure of left $R_{n+1}$-module is given by $r.b=\rho(r)b$ for $b\in R_n$, $r\in R_{n+1}$.  
The same procedure can be used to give $R_n$ a structure of $(R_{n},R_{n+1})$-bimodule.
We use the notation $_{n+1}(R_n)_n$ and $_{n}(R_n)_{n+1}$ for $R_n$ 
seen as a $(R_{n+1},R_{n})$-bimodule and  $(R_{n},R_{n+1})$-bimodule respectively.
Then we have the functors 
\begin{align*}
\Res_{\rho} &\colon\mspace{12mu} R_n-\amod\ \to R_{n+1}-\amod 
\\[1ex]
\Ext_{\rho}, \CExt_{\rho} &\colon R_{n+1}-\amod \to R_{n}-\amod 
\end{align*}
with 
\begin{equation*}
\Ext_{\rho}(M)= M \bigotimes\limits_{R_{n+1}}{}_{n+1}(R_n)_n  ,
\mspace{50mu}
\CExt_{\rho}(M)= \Hom_{R_{n+1}-\amod}\bigl( {}_n(R_{n})_{n+1}, M\bigr) 
\end{equation*}
for a right $R_{n+1}$-module $M$. 
We have that the functors $\Ext_{\rho}$ and $\CExt_{\rho}$ coincide and we also have isomorphisms of functors 
$\Ext_\rho\cong \Res_\imath$ and  $\Res_\rho \cong\Ind_\imath$.

%%%%%%%%%%%%%%%%%%%%%%%%%%%%%%%%%%%%%%%%%%%%%%%%%%%%%%%%%
\subsection{Factoring idempotents}\label{ssec:factidemp}%
%%%%%%%%%%%%%%%%%%%%%%%%%%%%%%%%%%%%%%%%%%%%%%%%%%%%%%%%%

In this section we give some properties of $R_{n+1}-\prmod$ that will be used in the sequel. 
For $\nu = \sum\limits_{j=1}^{n-1}\nu_i \alpha_i \in\Lambda_+^{n}$ 
and for an ordered sequence $i_1,\dotsc ,i_k$
we define 
\begin{equation}
\label{eq:nui}
\nu(i)=\sum\limits_{j<i}\nu_j\alpha_j + \sum\limits_{j\geq i}(\nu_j-1)\alpha_j , 
\end{equation}
and $\nu(i_1\dotsm i_k)$ 
as the result of iteration of~\eqref{eq:nui} from $i=i_k$ to $i=i_1$. 

For each $i\in\{1,\dotsm ,n\}$ 
let $p_i$ be the idempotent $1_{i,i+1,\dotsc, n}\in R_{n+1}$  
and for $1 _{\und{j}}\in R_{n+1}(\nu(i))$
let  
$e'(p_i,\und{j})\in R_{n+1}(\nu +\alpha_n)$ be the idempotent obtained by horizontal 
composition of the 
diagram for $p_i$  
at the left of the one for $1 _{\und{j}}$,  
\begin{equation*}
e'(p_i,\und{j}) \ =\  \ 
\labellist
\pinlabel $\dotsc$ at   90 61   
\pinlabel $\dotsc$ at  232 61   
\tiny \hair 2pt
\pinlabel $i$    at   2  10
\pinlabel $i+1$  at  46   9 
\pinlabel $n$    at 130   9
\pinlabel $j_{1}$ at 176 10
\pinlabel $j_{m}$ at 288 10
\endlabellist
\figins{-28}{0.8}{idpi} 
\end{equation*}
This generalizes easily to $R_{n+1}(k\alpha_n)$.
In this case we denote by $p_{i_1\dotsm i_k}\in R_{n+1}(k\alpha_n)$ the idempotent 
$1_{i_1,i_1+1,\dotsc ,n, i_2\dotsc ,n,\dotsc ,i_k,\dotsc ,n}$. 
The idempotent 
 $\tilde{e}'(p_{i_1\dotsm i_k},\und{j})\in R_{n+1}(\nu+k\alpha_n)$ for
$1_{\und{j}}\in R_{n+1}(\nu(i_1\dotsm i_k) )$ 
is defined 
as the horizontal concatenation placing 
the diagram of 
$p_{i_1\dotsm i_k}$ 
at the left of the one for $1_\und{j}$,   
\begin{equation*}
\tilde{e}'(p_{i_1\dotsm i_k},\und{j}) \ =\ \  
\labellist 
\pinlabel $\dotsc$ at   33 61
\pinlabel $\dotsc$ at  126 61
\pinlabel $\dotsc$ at  204 61   
\pinlabel $\dotsc$ at  282 61 
\pinlabel $\dotsc$ at  388 61   
\tiny \hair 2pt
\pinlabel $i_1$   at   4  10
\pinlabel $n$     at  59   9
\pinlabel $i_2$   at  98  10
\pinlabel $n$     at 152   9
\pinlabel $i_{k}$ at 253 10
\pinlabel $n$     at 310 10
\pinlabel $j_{1}$ at 346 10
\pinlabel $j_{\ell}$ at 429 10
\endlabellist
\figins{-28}{0.8}{idpij} 
\end{equation*} 
In the case $1\leq i_1\leq \dotsm \leq i_k\leq n+1$   
we write $e'(p_{i_1\dotsm i_k},\und{j})$ instead of 
$\tilde{e}'(p_{i_1\dotsm i_k},\und{j})$. 

\smallskip

We next introduce the notion of factoring a diagram through a family of idempotents $e_i$. 
%(this is a refinement of the notion of factoring sequences from~\cite{HL}). 
\begin{defn}
We say that $Y\in R_{n+1}(\nu+\alpha_n)$ factors through the family  
of idempotents $\{e_j\}_{j\in J}$ 
if it can be written as a sum $\sum_{j\in J}c_jX_j$ with all $c_j\in\Bbbk$ nonzero 
and where each $X_j$ is in $R_{n+1}(\nu+\alpha_n) e_j R_{n+1}(\nu+\alpha_n)$    
with $j$ minimal. 
\end{defn}

\begin{prop}\label{prop:onestrandn}
Every $1_{\und{j}}$ in $R_{n+1}(\nu+\alpha_n)$ factors through 
the family $\{e'(p_j,\und{j})\}_{j\in J}$ for some indexing set $J$. 
\end{prop}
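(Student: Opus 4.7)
The plan is to induct on the position $k$ in $\und{j}$ of the unique $n$-strand, whose uniqueness follows from $\nu+\alpha_n$ having $\alpha_n$-coefficient one. The base case $k=1$ is immediate: then $\und{j}=(n,\und{j}')$ and $1_{\und{j}}=e'(p_n,\und{j}')$ is already a member of the family.

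For the inductive step I consider the maximal staircase $(i,i+1,\dotsc,n)$ ending at the $n$-strand and occupying positions $p,p+1,\dotsc,k$ in $\und{j}$. If $p=1$ then $1_{\und{j}}=e'(p_i,\und{j}')$, where $\und{j}'$ is the tail of $\und{j}$, and we are done. Otherwise let $a$ be the label at position $p-1$; by maximality of the staircase, $a\neq i-1$. In the easy case $|a-i|>1$, relation~\eqref{eq:R2} shows that the square of the crossing on strands $p-1$ and $p$ equals $1_{\und{j}}$, so $1_{\und{j}}$ factors through $1_{\und{j}'}$, where $\und{j}'$ has the labels at positions $p-1$ and $p$ swapped; the $i$-strand now sits one position to the left, reducing to a situation covered by induction.

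The genuine work is in the exceptional cases $a=i+1$ and $a=i$, where the naive R2 swap is unavailable. For $a=i+1$, the substring $(i+1,i,i+1)$ at positions $p-1,p,p+1$ satisfies the hypothesis of~\eqref{eq:R3}, so $1_{(i+1,i,i+1)}$ equals the difference of two triple-crossing diagrams, each factoring through an intermediate idempotent of the form $1_{(i,i+1,i+1)}$ or $1_{(i+1,i+1,i)}$. Substituting locally expresses $1_{\und{j}}$ as a combination of diagrams passing through idempotents whose configuration near the staircase boundary has been simplified. For $a=i$, the equal-labeled pair at positions $p-1,p$ allows the dot slide relation~\eqref{eq:dotslide} to produce $1_{\und{j}}$ as a commutator of a crossing with a dot, and iterating~\eqref{eq:dotslides} transports the dot along the staircase until it can be absorbed at a point where the previous cases apply. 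The main obstacle will be to choose an induction measure that strictly decreases under all three kinds of moves (R2 swap, R3 reduction, dot slide); since each exceptional case genuinely replaces the obstruction by a structurally simpler one after the manipulation, a lexicographic combination of the position of the staircase with the complexity of its left neighborhood serves the purpose.
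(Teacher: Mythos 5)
Your plan is in the same spirit as the paper's: normalize $1_{\und{j}}$ by building a maximal staircase $(i,i+1,\dotsc,n)$ at the right, and resolve the obstruction immediately to its left by a case split that invokes~\eqref{eq:R2},~\eqref{eq:R3} and the dot-slide. The paper does essentially this, but it pushes the \emph{entire} staircase leftward (Step~1) rather than comparing only position $p-1$ with the staircase start, and that difference matters for the two places where your sketch has real gaps.

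The gap I'd flag most concretely is the case $a=i$. There the dot-slide applied to the two equal-labeled strands at positions $p-1,p$ gives $1_{\und{j}}=x_{p-1}\sigma_{p-1}-\sigma_{p-1}x_{p}$, but $\sigma_{p-1}$ is a crossing between two strands both labeled $i$, so it does not change the label sequence: both terms still factor through $1_{\und{j}}$ itself, and no new idempotent has appeared. Transporting the dot rightward through the staircase also does not help as stated, because dots never vanish in $R_{n+1}$ (only in cyclotomic quotients), so nothing gets ``absorbed.'' The paper's identity~\eqref{eq:invR3} is not a two-strand dot-slide; it is a three-strand identity for $1_{(i,i,i+1)}$ whose two terms genuinely factor through $1_{(i,i+1,i)}$. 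That swap is the crucial move: once the second $i$-strand sits to the \emph{right} of the $i+1$-strand, it R2-slides past $i+2,\dotsc,n$ and ends up to the right of the $n$-strand, which is what makes the position of the $n$-strand strictly drop. Your sketch never produces the idempotent $1_{(\dotsc,i,i+1,i,\dotsc)}$, so it never makes this progress.

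This feeds into your second unresolved point, the induction measure. Your R2 and R3 moves by themselves leave the $n$-strand fixed, and after applying~\eqref{eq:R3} to $(i+1,i,i+1)$ the term $1_{(\dotsc,i,i+1,i+1,i+2,\dotsc,n)}$ has a strictly \emph{shorter} maximal staircase than you started with, so ``position of the staircase'' can go the wrong way. The paper's measure is simply the number of strands to the left of the $n$-strand, and it arranges (via Step~1 together with~\eqref{eq:invR3} and the R3 case) that each complete pass of the procedure decreases that number; the vague ``lexicographic combination'' you defer to would need to be filled in, and I do not think your stated moves admit one until the $a=i$ case is repaired. Fix that case by deriving the analogue of~\eqref{eq:invR3} for $(i,i,i+1)$ and then sliding the freed $i$-strand past the staircase, and the termination argument becomes the paper's: each loop moves one more strand to the right of the $n$-strand.
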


\begin{proof}
The idempotent $1_{\und{j}}$ consists of $\vert\und{j}\vert$ parallel vertical strands labeled 
$j_1,\dotsc ,j_{\vert\und{j}\vert}$ in that order from left to right. 
We give an algorithm to obtain the factorization as claimed.\\ 
Step 1: Take the single strand labeled $n$ and start pushing it to the left by application of 
the move in~\eqref{eq:R2} 
until we find a strand labeled $n-1$. 
\begin{equation*}
1_{\und{j}}\ = 
\mspace{40mu}
\labellist
\pinlabel $\dotsc$ at  -30 48   
\pinlabel $\dotsc$ at  132 48   
\pinlabel $\dotsc$ at  262 48   
\tiny \hair 2pt
\pinlabel $j_{r-2}$ at   0 -10
\pinlabel $n-1$    at  44 -11
\pinlabel $j_r$    at  88 -10
\pinlabel $j_{r'}$  at 176 -10
\pinlabel $n$      at 218 -10
\endlabellist
\figins{-19}{0.6}{idann} 
\mspace{50mu}\longmapsto\mspace{50mu}
\labellist
\pinlabel $\dotsc$ at  -30 48   
\pinlabel $\dotsc$ at  132 48   
\pinlabel $\dotsc$ at  262 48   
\tiny \hair 2pt
\pinlabel $j_{r-2}$ at   0 -10
\pinlabel $n-1$    at  44 -11
\pinlabel $j_r$    at  88 -10
\pinlabel $j_{r'}$  at 176 -10
\pinlabel $n$      at 218 -10
\endlabellist
\figins{-19}{0.6}{idannR2}\mspace{30mu}\vspace*{2ex}
\end{equation*}
Step 2: Pass to the strand labeled $n-1$. There are three cases to consider: 
we can have $(i)$ $j_{r-2}= n-1$, $(ii)$ $j_{r-2}=n-2$ or $(iii)$ $j_{r-2}\neq n-2,n-1$.\\
$(i)$ If $j_{r-2}= n-1$ we use the identity 
\begin{equation}\label{eq:invR3}
\labellist
\tiny \hair 2pt
\pinlabel $n-1$ at   0 -10
\pinlabel $n-1$ at  60 -10
\pinlabel $n$   at 118 -10
\endlabellist
\figins{-23}{0.7}{id3}\ \ \  
=\ \ \ 
\labellist
\tiny \hair 2pt
\pinlabel $n-1$ at   0 -10
\pinlabel $n-1$ at  60 -10
\pinlabel $n$   at 118 -10
\endlabellist
\figins{-23}{0.7}{digdrvert}\ \ \
-\ \ \ 
 \labellist
\tiny \hair 2pt
\pinlabel $n-1$ at   0 -10
\pinlabel $n-1$ at  60 -10
\pinlabel $n$   at 118 -10
\endlabellist
\figins{-23}{0.7}{digdlvert}
\vspace*{2ex}
\end{equation}
which follows easily from~\eqref{eq:R2} and~\eqref{eq:dotslide}.  
We see that $1_{\dotsm n-1,n-1,n\dotsm}$ factors through $1_{\dotsm n-1,n,n-1\dotsm}$. 
This reduces the number of strands on the left of the strand labeled $n$. 
We then apply Step 1 to the block formed by strand labeled $n$ and the one labeled $n-1$ immediately on its left. \\  
$(ii)$ If $j_{r-2}=n-2$ we apply Step 1 to the block formed by the strands labeled $n-2$, $n-1$, $n$. 
We can proceed until we find a strand labeled $n-3$, $n-2$ or $n-1$.  
If we find a strand labeled $n-3$ we repeat Step 1 to the block formed by the strands labeled 
$n-3$, $n-2$, $n-1$ and $n$. 
Case we find a strand labeled $n-2$ we are in the situation of $(i)$ with $n$ replaced by $n-1$. 
Case we find a strand labeled $n-1$ we use~\eqref{eq:R3} to obtain that $1_{\dotsm n-1,n-2,n-1,n \dotsm}$ 
factors through $1_{\dotsm n-1,n-1,n-2,n \dotsm}$  and through $1_{\dotsm n-2,n-1,n-1,n \dotsm}$.  
In the first case we can apply~\eqref{eq:R2} to obtain that $1_{\dotsm n-1,n-1,n-2,n \dotsm}$ factors through 
 $1_{\dotsm n-1,n-1,n\dotsm}$ which is the case $(i)$. In the second case we apply $(i)$ to the strands labeled 
$n-1$, $n-1$, $n$. 
Either way we reduce the number of strands on the left of the one labeled $n$.\\ 
$(iii)$ If $j_{r-2}\neq n-2,n-1$ we apply Step 1 to the block formed by the strands  
labeled $n-1$, $n$ until we find a strand labeled $n-2$ or $n-1$. We then proceed like in $(ii)$.   
We then proceed recursively: each time we get a diagram factoring through $1_{\dotsm s,s+1,\dotsm, n-1,n \dotsm}$ 
we apply Step 1 to the entire block formed by the strands labeled $s$ to $n$ until
we find a strand labeled $j$ for $s-1\leq j\leq n-1$. We then apply the move in~\eqref{eq:R2} to pull this strand 
to the left of the one labeled $j-1$ obtaining the configuration below.
\begin{equation*}
y =
\mspace{30mu}
\labellist
\pinlabel $\dotsc$ at  -20 48   
\pinlabel $\dotsc$ at   92 48   
\pinlabel $\dotsc$ at  258 48
\pinlabel $\dotsc$ at  328 48
\tiny \hair 2pt
\pinlabel $j$   at   0 -10
\pinlabel $s$   at  44 -11
\pinlabel $j-2$ at  128 -10
\pinlabel $j-1$ at 176 -10
\pinlabel $j$   at 218 -10
\pinlabel $n$   at 292 -10
\endlabellist
\figins{-19}{0.6}{idannR2rev}\mspace{30mu}\vspace*{2ex} 
\end{equation*}
Using~\eqref{eq:R3} in the region factoring through $1_{\dotsm j,j-1,j\dotsm}$ one obtains that $y$ 
factors through the idempotent $1_{\dotsm j,j,j-1,j+1\dotsm}$ and through $1_{\dotsm j-1,j,j,j+1\dotsm}$. 
Using~\eqref{eq:R2} on the first term we can slide the strand labeled $j-1$ to the right of the one 
labeled $n$ and then apply the procedure described above in $(i)$ to $1_{\dotsm j,j,j+1\dotsm}$.  
To the second term we apply the procedure of $(i)$ to $1_{\dotsm j,j,j+1\dotsm}$. 
Again, in either case we obtain a linear combination of terms each having less strands on the left 
of the one labeled $n$. 
The procedure ends when we obtain a linear combination of diagrams, each one factoring through  
and idempotent of the form $e'(p_i,\und{i})$ as claimed.  
\end{proof}

We now take care of the case of $k>1$.

\begin{lem}\label{lem:ordseq}
Suppose $k=2$. Then $\tilde{e}'(p_{i_1i_2},\und{i})$ factors through a family 
$\{e'(j_ij_2,\und{j})\}_{j\in J}$ for some indexing set $J$. 
\end{lem}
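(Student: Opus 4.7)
The case $i_1\leq i_2$ is immediate: then $\tilde{e}'(p_{i_1i_2},\und{i})=e'(p_{i_1i_2},\und{i})$ already lies in the ordered family, hence factors trivially through itself. So assume $i_1>i_2$, and write the identity-strand diagram
\begin{equation*}
\tilde{e}'(p_{i_1i_2},\und{i})\;=\;1_{i_1,i_1+1,\dotsc,n,\,i_2,i_2+1,\dotsc,n,\,\und{i}}\,.
\end{equation*}
My goal is to rewrite this as a $\Bbbk$-linear combination of diagrams each factoring through some $e'(p_{j_1j_2},\und{j})$ with $j_1\leq j_2$.

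I would proceed by induction on the ``disorder'' $d=i_1-i_2\geq 1$, in the spirit of the algorithm from the proof of Proposition~\ref{prop:onestrandn}. The leftmost strand of the second block is labeled $i_2$, and every label appearing in the first block $p_{i_1}$ exceeds $i_2$ by at least $d$. Hence, by \eqref{eq:R2} in the free-commutation case (labels differing by more than $1$), each strand of the second block labeled $\ell$ with $i_2\leq\ell\leq i_1-2$ slides past the entire first block without producing correction terms. This shuffles the diagram into a sum of terms whose left prefix is $(i_2,i_2+1,\dotsc,i_1-2)$, followed by the strand labeled $i_1-1$ (which must still interact with the first block), then the remaining strands.

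The only nontrivial interaction is between the strand labeled $i_1-1$ and the strand labeled $i_1$ at the left edge of the first block: the adjacent case of \eqref{eq:R2} there produces a sum of two identity-with-dot diagrams, and subsequent applications of \eqref{eq:R3} and \eqref{eq:dotslide} allow one to reassemble each summand into a diagram whose left part has the form $1_{j_1,j_1+1,\dotsc,n,\,j_2,j_2+1,\dotsc,n,\,\und{j}}$ with strictly smaller disorder $j_1-j_2<d$ (or $j_1\leq j_2$ outright). A final application of Proposition~\ref{prop:onestrandn} absorbs any stray single-$n$-strand configurations arising in the tail into the standard ordered form.

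The main obstacle is the careful bookkeeping of the correction terms produced by applications of \eqref{eq:R2} (adjacent case) and \eqref{eq:R3}: these introduce dots and braidings that must be tracked to verify each resulting summand indeed factors through an ordered $e'(p_{j_1j_2},\und{j})$. The induction on $d$, combined with the concrete algorithm of Proposition~\ref{prop:onestrandn} to handle the single-$n$-strand sub-configurations that appear along the way, closes the argument.
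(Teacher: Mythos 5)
There is a genuine gap at the critical step, and it is worth pinpointing precisely.

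Your reduction by free slides is fine: every strand of the second block with label $\ell\leq i_1-2$ commutes past the entire first block by the non-adjacent case of \eqref{eq:R2}, so you do reach a single term factoring through
\begin{equation*}
1_{\,i_2,\dotsc,i_1-2\ \mid\ i_1,i_1+1,\dotsc,n\ \mid\ i_1-1,i_1,i_1+1,\dotsc,n\ \mid\ \und{i}}\,.
\end{equation*}
The problem is what you do next. You claim the adjacent case of \eqref{eq:R2}, applied to the crossing of $i_1-1$ with the first block's $i_1$, ``produces a sum of two identity-with-dot diagrams'' and that these can then be reassembled. But the adjacent case of \eqref{eq:R2} is a statement about the \emph{double crossing}: it says $\sigma_{i_1,i_1-1}\sigma_{i_1-1,i_1}$ equals a sum of two dotted identities. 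It does not give a way to write the idempotent $1_{\dotsm,i_1,i_1-1,\dotsm}$ as something factoring through $1_{\dotsm,i_1-1,i_1,\dotsm}$; the order of the labels does not change. In other words, adjacent \eqref{eq:R2} simply never performs a re-ordering, so it cannot be the move that reduces your ``disorder'' $d$. Re-ordering adjacent labels requires \eqref{eq:R3}, and \eqref{eq:R3} requires a three-strand pattern $i,j,i$ with $j=i\pm1$.

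Here is why that pattern is not available in your configuration and why the paper's algorithm slides in the opposite direction. After you bring $i_1-1$ next to the first block's $i_1$ (sliding it freely past $i_1+1,\dotsc,n$), you would need a second strand labeled $i_1$ adjacent to it to form $i_1,i_1-1,i_1$. The only other $i_1$ is the leading strand of the second block, but it is separated from $i_1-1$ by $i_1+1,\dotsc,n$, and the crossing with $i_1+1$ is \emph{also} adjacent, so you cannot bring that $i_1$ over for free; you would need yet another \eqref{eq:R3} pattern, and the regress does not terminate in any obvious way. The paper avoids this entirely by sliding the first block's $n$ strand \emph{to the right}, into the interior of the second block; there it immediately meets the second block's $n-1$ followed by its $n$, producing the $n,n-1,n$ pattern to which \eqref{eq:R3} applies. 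One branch of \eqref{eq:R3} terminates (giving a factorization through $e'(p_{jn},\cdot)$) and the other continues the recursion one label lower, ending in the family $\{e'(p_{j,n-s},\cdot)\}_{s}\cup\{e'(p_{ij},\cdot)\}$. So the paper's proof is not an induction on $i_1-i_2$ but a peeling-off recursion on the labels $n,n-1,\dotsc$, and the family it produces is genuinely larger than the single ordered idempotent your sketch seems to aim for. To rescue your version you would have to produce the required \eqref{eq:R3} partner strands, and the natural place to find them is inside the longer second block, which leads you back to the paper's direction of slide.
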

\begin{proof} 
Suppose we have 
\begin{equation*}
\tilde{e}'(p_{ji},\und{i})\ =\ \  
\labellist
\pinlabel $\dotsc$ at   94 48   
\pinlabel $\dotsc$ at  242 48  
\pinlabel $\dotsc$ at  432 48 
\pinlabel $\dotsc$ at  512 48 
\tiny \hair 2pt
\pinlabel $j$      at   0 -10
\pinlabel $j+1$    at  44 -11
\pinlabel $n$      at 132 -10
\pinlabel $i$      at 176 -10
\pinlabel $j-1$    at 300 -10
\pinlabel $j$      at 346 -10
\pinlabel $j+1$    at 390 -10
\pinlabel $n$      at 474 -10
\endlabellist
\figins{-19}{0.6}{idbn} 
\vspace*{2ex} 
\end{equation*}
Using~\eqref{eq:R2} we slide the first strand labeled $n$ from the left to the right until it encounters 
a strand labeled $n-1$ to obtain a factorization through $1_{j\dotsm n-1,i\dotsm n-2, n,n-1,n,*}$: 
\begin{equation}\label{eq:bigslider}
\labellist
\pinlabel $\dotsc$ at   94 48   
\pinlabel $\dotsc$ at  282 48  
\pinlabel $\dotsc$ at  478 48 
\pinlabel $\dotsc$ at  636 48 
\tiny \hair 2pt
\pinlabel $j$      at   0 -10 
\pinlabel $j+1$    at  44 -11 
\pinlabel $n-1$    at 132 -10 
\pinlabel $n$      at 176 -11 
\pinlabel $i$      at 218 -10 
\pinlabel $j-1$    at 342 -10 
\pinlabel $j$      at 386 -10 
\pinlabel $j+1$    at 430 -10 
\pinlabel $n-2$    at 512 -10 
\pinlabel $n-1$    at 560 -10 
\pinlabel $n$      at 602 -10
\endlabellist
\figins{-19}{0.6}{idbnn} 
\vspace*{2ex} 
\end{equation}
We now use~\eqref{eq:R3} in the $(n)-(n-1)-(n)$ part on the right of the diagram to obtain 
\begin{equation*}
\labellist
\pinlabel $\dotsc$ at   33 48   
\pinlabel $\dotsc$ at  190 48  
\pinlabel $\dotsc$ at  398 48 
\tiny \hair 2pt
\pinlabel $j$      at   0 -10 
\pinlabel $n-1$    at  58 -10 
\pinlabel $n$      at 100 -11 
\pinlabel $i$      at 144 -10 
\pinlabel $j$      at 228 -10 
\pinlabel $n-2$    at 280 -10 
\pinlabel $n-1$    at 330 -10 
\pinlabel $n$      at 378 -10
\endlabellist
\figins{-19}{0.6}{idbnnR3l}\mspace{40mu} -\mspace{15mu}    
\labellist
\pinlabel $\dotsc$ at   33 48   
\pinlabel $\dotsc$ at  190 48  
\pinlabel $\dotsc$ at  398 48 
\tiny \hair 2pt
\pinlabel $j$      at   0 -10 
\pinlabel $n-1$    at  58 -10 
\pinlabel $n$      at 100 -11 
\pinlabel $i$      at 144 -10 
\pinlabel $j$      at 228 -10 
\pinlabel $n-2$    at 280 -10 
\pinlabel $n-1$    at 330 -10 
\pinlabel $n$      at 378 -10
\endlabellist
\figins{-19}{0.6}{idbnnR3r}
\vspace*{2ex} 
\end{equation*}

The first term  
factors through the idempotent $1_{j\dotsm n-1,i\dotsm j,n-2,n,n,n-1*}$ and the second through  
$1_{j\dotsm n-1,i\dotsm j,n-2,n-1,n,n*}$. 
For the first term it is easy to see that we can do the same as in~\eqref{eq:bigslider} to slide the entire block 
formed by the strands labeled $1\dotsm j,n-2$ to the right of the two strands labeled $n$ to obtain a 
factorization through $1_{j\dotsm n-1,n,n, i\dotsm j,n-2,n-1*}$ which is of the form $e'(p_{jn},\und{j})$ 
for some $\und{j}$, as wanted. 
For the second term we slide 
the first strand labeled $n-1$ from the left to the right until it finds the strand labeled $n-2$:
\begin{equation*}
\labellist
\pinlabel $\dotsc$ at   94 48   
\pinlabel $\dotsc$ at  282 48  
\pinlabel $\dotsc$ at  478 48 
\pinlabel $\dotsc$ at  726 48 
\tiny \hair 2pt
\pinlabel $j$      at   0 -10 
\pinlabel $j+1$    at  44 -11 
\pinlabel $n-2$    at 129 -10 
\pinlabel $n-1$    at 177 -10 
\pinlabel $i$      at 218 -10 
\pinlabel $j-1$    at 342 -10 
\pinlabel $j$      at 386 -10 
\pinlabel $j+1$    at 430 -10 
\pinlabel $n-3$    at 510 -10 
\pinlabel $n-2$    at 559 -10 
\pinlabel $n-1$    at 604 -10
\pinlabel $n$      at 644 -10
\pinlabel $n$      at 686 -10
\endlabellist
\figins{-19}{0.6}{idbnnn} 
\vspace*{2ex} 
\end{equation*}
Applying~\eqref{eq:R3} to the part containing $(n-1)-(n-2)-(n-1)$ we see that it factors 
through 
\begin{equation}\label{eq:factoriz}
1_{j\dotsm n-2,i\dotsm n-3,n-1,n-1,n-2,n,n*}
\mspace{20mu}\text{and}\mspace{20mu}  
1_{j\dotsm n-2,i\dotsm n-3,n-2,n-1,n-1,n,n*}.
\end{equation} 
Using~\eqref{eq:R2} we can put the first term in the form 
\begin{equation*}
\labellist
\pinlabel $\dotsc$ at   47 65   
\pinlabel $\dotsc$ at  496 65 
\pinlabel $\dotsc$ at  650 65 
\tiny \hair 2pt
\pinlabel $j$      at   0 -10 
\pinlabel $n-2$    at  87 -11 
\pinlabel $i$      at 130 -10 
\pinlabel $n-3$    at 212 -10 
\pinlabel $n-1$    at 270 -10 
\pinlabel $n-1$    at 316 -10 
\pinlabel $n-2$    at 362 -10 
\pinlabel $n$      at 404 -10
\pinlabel $n$      at 446 -10
\endlabellist
\figins{-24}{0.75}{idcR2} 
\vspace*{2ex} 
\end{equation*}
and we see that it factors through $1_{j\dotsm n-2,n-1,n-1,n,n* }$ 
which in turn factors  (twice) through 
an idempotent of the form $e'(p_{j,n-1},\und{j}')$ as wanted
(this uses the identity~\eqref{eq:invR3}). 
For the second term in~\eqref{eq:factoriz} 
we start by sliding 
the first strand labeled $n-2$ from the left  
and repeat the procedure.  
After having slid all the strands at the left of the first one labeled $i$ 
to the right we end up with a factorization through 
the family $\{e'(p_{j,n-r},\und{r})\}_{r\in\{0,\dotsc, n-j\} }$
and through a term of the form
\begin{equation}\label{factorizz}
1_{i\dotsm j-1, j,j,j+1,j+1, \dotsc, n-1,n-1,n,n*}.
\end{equation}
Notice that after the first strand labeled $j-1$ all the strands are labeled 
in pairs each two strands with the same label appearing consecutively. 
Applying the identity~\eqref{eq:invR3}, 
to the part labeled $(j)-(j)-(j+1)$ we get a factorization 
through $1_{i\dotsm j-1, j,j+1,j,j+1, \dotsc, n-1,n-1,n,n*}$.
Doing the same to the part containing $(j+1)-(j+2)-(j+2)$ we
get a factorization through 
$1_{i\dotsm j-1, j,j+1,j,j+2, j+1,j+2, \dotsc, n-1,n-1,n,n*}$ 
which factors through 
$$1_{i\dotsm j-1, j,j+1,j+2,j, j+1,j+2, \dotsc, n-1,n-1,n,n*}$$ 
(this uses~\eqref{eq:R2} between the two consecutive strands labeled $j$ and $j+2$).  
It is clear that we will end up  with a factorization through $e'(p_{ij},\und{\ell})$ 
for some $\und{\ell}$. 
This way we see that $\tilde{e}'(p_{i_1i_2},\und{i})$ factors through the family 
$\{e'(p_{j,n-s},\und{s})\}_{s\in\{0,\dotsm, n-j \} }\cup \{e'(p_{ij},\und{\ell})\}$ 
for some idempotents $\{1_{\und{r}}\}_{r=0,\dotsm, n-j}$ and $1_{\und{\ell}}$.  
\end{proof}

\begin{prop}\label{prop:genfactoriz}
Every $1_{\und{j}}$ in  $R_{n+1}(\nu+k\alpha_n)$ factors through a family 
$\{e'(p_{i_1\dotsm i_k}, \und{i})\}_{i_1\dotsm i_k,\und{i}\in I}$ 
for some indexing set $I$.   
\end{prop}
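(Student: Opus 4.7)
The plan is induction on $k$, with the base case $k=1$ supplied by Proposition~\ref{prop:onestrandn}. The overall strategy for the inductive step is to peel off a single strand labeled $n$, recurse on what remains, and then reorder the resulting blocks via Lemma~\ref{lem:ordseq}.

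For the inductive step, fix $1_{\und{j}}\in R_{n+1}(\nu+k\alpha_n)$ and single out the \emph{leftmost} strand of $1_{\und{j}}$ labeled $n$. First I would run the algorithm from the proof of Proposition~\ref{prop:onestrandn} on this strand alone. Since every strand lying to its left carries a label different from $n$, no crossing between two strands labeled $n$ is ever encountered, and each move in that algorithm applies verbatim; the outcome is a factorization of $1_{\und{j}}$ through a linear combination of idempotents $\tilde{e}'(p_{i_1},\und{j}')$ in which $1_{\und{j}'}\in R_{n+1}(\nu(i_1)+(k-1)\alpha_n)$ still contains the remaining $k-1$ strands labeled $n$. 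Applying the inductive hypothesis to each such $1_{\und{j}'}$ factors it through a family $\{e'(p_{i_2\dotsm i_k},\und{i})\}$ with $1\leq i_2\leq\dotsm\leq i_k\leq n+1$, and horizontally concatenating with the block $p_{i_1}$ on the left then yields a factorization of $1_{\und{j}}$ through idempotents of the form $\tilde{e}'(p_{i_1 i_2\dotsm i_k},\und{i})$.

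What remains is to enforce $i_1\leq i_2\leq\dotsm\leq i_k$ (the condition distinguishing $e'$ from $\tilde e'$). For this I would apply Lemma~\ref{lem:ordseq} locally to the two leftmost blocks, rewriting $\tilde{e}'(p_{i_1 i_2\dotsm i_k},\und{i})$ as a linear combination of $\tilde{e}'(p_{j_1 j_2 i_3\dotsm i_k},\und{i}')$ with $j_1\leq j_2$, and then iterate the same move on any adjacent out-of-order pair in bubble-sort fashion; the argument in the proof of Lemma~\ref{lem:ordseq} is purely local in the diagram and so extends to any adjacent pair among the $k$ blocks. I expect the main obstacle to be the bookkeeping for this iteration, since a single application of Lemma~\ref{lem:ordseq} sorts two adjacent blocks but may alter the block values that appear, so one must exhibit a termination measure (for instance the lexicographic order on the sequence of block labels read from the right, refined by the inversion count) that strictly decreases at each step. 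Once termination is established, the process ends with an expression supported on idempotents $e'(p_{i_1\dotsm i_k},\und{i})$ with $1\leq i_1\leq\dotsm\leq i_k\leq n+1$, yielding the claimed factorization.
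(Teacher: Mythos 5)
Your overall plan (induct on $k$, use Proposition~\ref{prop:onestrandn} to peel off the leftmost strand labeled $n$, then enforce the ordering $i_1\leq\dotsm\leq i_k$ via Lemma~\ref{lem:ordseq}) is the same strategy the paper uses, but there is a genuine gap in the sorting step, specifically in the sentence claiming that Lemma~\ref{lem:ordseq} rewrites $\tilde{e}'(p_{i_1 i_2\dotsm i_k},\und{i})$ as a combination of $\tilde{e}'(p_{j_1 j_2 i_3\dotsm i_k},\und{i}')$. That is not what the lemma delivers. Its output is a family $\{e'(p_{j_1 j_2},\und{j})\}$ in which $\und{j}$ is essentially unconstrained: tracing the proof of Lemma~\ref{lem:ordseq}, the sliding moves push a bunch of strands with labels $<n$ into the region immediately to the right of the two sorted strands labeled $n$. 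In the two-block case that region is just the tail $\und{i}$, so this is harmless; but in your multi-block setting that region begins with the blocks $p_{i_3},\dotsc,p_{i_k}$, and the displaced strands now sit \emph{between} $p_{j_2}$ and $p_{i_3}$, so the resulting idempotent is not of the form $\tilde{e}'(p_{j_1 j_2 i_3\dotsm i_k},\und{i}')$ at all. The claim that the lemma's argument ``is purely local and so extends to any adjacent pair'' glosses over exactly this: the argument is local on the \emph{left} of the pair but dumps debris to the \emph{right}, which is where the rest of your block structure lives.

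Because of this, a bare bubble-sort on block labels cannot be iterated: after a single application of Lemma~\ref{lem:ordseq} the remaining strands labeled $n$ are no longer in block form, so there is no well-defined ``next adjacent pair'' to sort, and your proposed termination measure (lexicographic order on block labels refined by inversion count) is measuring something that has ceased to exist. The paper's proof confronts this head-on: after each use of Lemma~\ref{lem:ordseq} it \emph{re-applies} Proposition~\ref{prop:onestrandn} to the next strand labeled $n$ in order to re-establish the block structure to its left before sorting again, and the termination argument is phrased not in terms of block labels but in terms of the number of strands sitting to the left of each strand labeled $n$, a quantity that each slide strictly decreases. To repair your proof you would need to interleave an application of Proposition~\ref{prop:onestrandn} between the sorting steps (as the paper does) and replace the block-label measure by the strand-count measure.
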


\begin{proof}
We use a combination of Proposition~\ref{prop:onestrandn} and Lemma~\ref{lem:ordseq}. 
By application of the method described in the proof of Proposition~\ref{prop:onestrandn} 
to the leftmost strand of $1_{\und{j}}$ labeled $n$ we factorize it through some family  
the idempotents $\{e'(p_{a_1},\und{a})\}$
where each $1_{\und{a}}$ is in $R_{n}(\nu(a_1)+(k-1)\alpha_n)$. 
Repeating the procedure for the newly created $1_{\und{r}}$ we get that 
$1_{\und{j}}$ factors through  $\{\tilde{e}'(p_{b_1b_2}, \und{b})\}$
for $1_{\und{b}}$ in $R_{n}(\nu(b_1b_2)+(k-2)\alpha_n)$. 
But each of the $\tilde{e}'(p_{b_1b_2}, \und{b})$ factors through
a family $\{e'(p_{c_1c_2},\und{c})\}$ from Lemma~\ref{lem:ordseq} and therefore 
$1_{\und{j}}$ factors through a family $\{e'(p_{d_1d_2},\und{d})\}$. 
Then pass to the third leftmost strand labeled $n$.   
From Proposition~\ref{prop:onestrandn} each of the $e'(p_{d_1d_2},\und{d})$ 
factors through a family $\{\tilde{e}'(p_{d_1d_2f_3},\und{f})\}$ with 
$d_1\leq d_2$ as before and $1_{\und{f}}\in R(\nu(d_1d_2f_3)+(k-3)\alpha_n)$. 
For each of these terms we apply Lemma~\ref{lem:ordseq} again: if $d_2\leq f_3$ do nothing  
otherwise factorize $\tilde{e}'(p_{d_1d_2f_3},\und{f})$ 
through $\{\tilde{e}'(p_{d_1g_2g_3},\und{g}) \}$ where $g_2\leq g_3$. 
If $d_1\leq g_2$ do nothing otherwise
apply Lemma~\ref{lem:ordseq} to the $p_{d_1}p_{g_2}$ part to obtain a factorization 
through $\{e'(p_{h_1h_2},\und{h})\}$ with $h_1\leq h_2$. This procedure slides strands to the space between 
the second and third strands labeled $n$ and therefore we need to 
apply Proposition~\ref{prop:onestrandn} to the third strand labeled $k$ again and repeat the  
procedure described above. Notice that applying Proposition~\ref{prop:onestrandn} and  
Lemma~\ref{lem:ordseq} amounts of ``sliding'' locally some strands \emph{to the right} 
of a strand labeled $n$.  
This means that each time we apply each of these procedures to a strand labeled $n$ 
we decrease the number of strands on its left. This means that the process terminates 
with the factorization of $1_{\und{j}}$ through a family $\{e'(p_{m_1m_2m_3},\und{m})\}$ 
with each $1_{\und{m}}\in R_{n+1}(\nu(m_1m_2m_3)-(k-3)\alpha_n)$. 
We now repeat the whole procedure to the fourth strand labeled $n$. 
Finiteness of the number of strands on its left implies that 
application of Proposition~\ref{prop:onestrandn} and Lemma~\ref{lem:ordseq} as above allows factoring 
$1_{\und{j}}$ through $\{ e'(p_{n_1n_2n_3n_4},\und{n})\}$. 
Proceeding recursively with the remaining strands labeled $n$ we get that $1_{\und{j}}$ factors through 
a family $\{e'(p_{\ell_1\ell_2\ell_3\dotsm\ell_k},\und{\ell})\}_{\ell\in L}$ with each $1_{\ell}$ 
in $R_{n+2}(\nu(\ell_1\dotsm\ell_k))$, as claimed. 
\end{proof}

%%%%%%%%%%%%%%%%%%%%%%%%%%%%%%%%%%
%%%                            %%%
%%%       Cyclotomic KLR       %%%
%%%                            %%%
%%%%%%%%%%%%%%%%%%%%%%%%%%%%%%%%%%

%%%%%%%%%%%%%%%%%%%%%%%%%%%%%%%%%%%%%%%%%%%%%%%%%%%%%%%%%%%%
\subsection{Cyclotomic KLR-algebras}

Fix a partition 
$\lambda$ with $n+1$ parts  
for once and for all and 
let $I^{\lambda}$ the two-sided ideal genera\-ted by $x^{\llambda_{i_1}}_{1,\und{i}}$ 
for all sequences $\und{i}$. 

\begin{defn}\label{def:cycKLR}. 
The cyclotomic KLR algebra $R_{n+1}^\lambda$ is the quotient of 
$R_{n+1}$ by the two-sided ideal $I^{\lambda}$. 
\end{defn}
Differently from the standard convention in the literature we label cyclotomic KLR algebras 
by partitions instead of integral dominant weights. This convention will be useful later. 
In terms of diagrams we are taking the quotient of  $R_{n+1}$ by the two-sided ideal generated by all the diagrams 
of the form 
\begin{equation*}
\labellist
\pinlabel $\lambda$ at  -64 74  
\pinlabel $\dotsc$  at  134 63   
\tiny \hair 2pt
\pinlabel $\llambda_{j_1}$  at  -12 66
\pinlabel $j_1$  at   2 10  \pinlabel $j_2$   at  46 10
\pinlabel $j_3$  at  90 10  \pinlabel $j_k$   at 180 10
\endlabellist
\figins{-19}{0.70}{cyclid} 
\end{equation*}
where the leftmost strand has $\llambda_{j_1}$ dots on it. 
We always label the leftmost region of a diagram with a partition $\lambda$ 
to indicate it is in $R_{n+1}^\lambda$. 
The following was proved in~\cite{W1}.
\begin{lem}\label{lem:frobKLR}
The cyclotomic KLR algebra $R_{n+1}^\llambda$ is Frobenius. 
\end{lem}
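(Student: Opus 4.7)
The plan is to establish the Frobenius property by exhibiting a nondegenerate, associative trace $\tau\colon R_{n+1}^\lambda\to\Bbbk$. A preliminary step is to verify that $R_{n+1}^\lambda$ is finite-dimensional: starting from the cyclotomic relation $x_{1,\und{i}}^{\llambda_{i_1}}=0$ on the leftmost strand and using~\eqref{eq:dotslides} and~\eqref{eq:R3} to propagate the bound on the number of dots inward, one obtains a finite spanning set indexed by reduced words with bounded dot patterns on each strand.

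To construct the trace, I would fix for each sequence $\und{i}$ a distinguished top-degree element $\pi_{\und{i}}\in 1_{\und{i}}R_{n+1}^\lambda 1_{\und{i}}$, namely the longest permutation decorated by the maximal admissible powers of dots compatible with the cyclotomic relation. Define $\tau$ as the linear functional reading off the coefficient of $\pi_{\und{i}}$ on each diagonal block $1_{\und{i}}R_{n+1}^\lambda 1_{\und{i}}$, extended by zero on off-diagonal blocks and on strictly lower-degree components. Associativity $\tau(ab)=\tau(ba)$ is checked locally: no defining relation \eqref{eq:R2}--\eqref{eq:dotslide} can create or cancel a top-degree contribution asymmetrically, so stacking $a$-on-$b$ and $b$-on-$a$ produce the same leading coefficient.

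For nondegeneracy, given a nonzero $x\in 1_{\und{i}}R_{n+1}^\lambda 1_{\und{j}}$, I must produce $y\in 1_{\und{j}}R_{n+1}^\lambda 1_{\und{i}}$ with $\tau(xy)\neq 0$. Here the factoring machinery of Section~\ref{ssec:factidemp}, in particular Proposition~\ref{prop:genfactoriz}, lets us reduce to the case where $x$ factors through an idempotent of the form $e'(p_{i_1\dotsm i_k},\und{i})$, against which a candidate dual $y$ can be constructed by vertical reflection of the diagram together with the appropriate number of added dots to reach $\pi_{\und{i}}$ in degree.

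The main obstacle is showing that these candidate duals genuinely survive the cyclotomic relation rather than being killed by it; this is equivalent to computing the graded dimensions of the weight subspaces of $R_{n+1}^\lambda$, which is exactly the content of Khovanov and Lauda's cyclotomic conjecture. Consequently the Frobenius statement and the cyclotomic conjecture should really be proved in tandem, and the natural way to close the loop is to induct on $n$ via the categorical branching functor $\Pi\colon R_{A_n}^\lambda-\fmod\to\bigoplus_{\mu\in\tau(\lambda)}R_{A_{n-1}}^\mu-\fmod$ introduced in the paper, transferring the Frobenius structure from the smaller algebras $R_{A_{n-1}}^\mu$ to $R_{A_n}^\lambda$.
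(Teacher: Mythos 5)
The paper does not prove this lemma; it simply cites Webster's work \cite{W1} and moves on. So there is no proof of the paper's own to compare yours against, and your sketch should be judged on its own merits.

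Your plan has a genuine gap, which to your credit you identify yourself: the existence of a nonzero top-degree element $\pi_{\und{i}}$ in each block, and the fact that the proposed ``vertically reflected'' dual survives the cyclotomic ideal, amount to knowing the graded dimensions of $R_{n+1}^\lambda$, which is the hard content of the cyclotomic conjecture. The issue is that the fix you propose does not close this gap. First, the map produced by the branching machinery goes the wrong way for a Frobenius transfer: Lemma~\ref{lem:cycinc} and its corollary give a \emph{surjection} of algebras $\pi^\lambda\colon R_{n+1}^\lambda\twoheadrightarrow\bigoplus_{\mu}R_n^{\mu}$, and a quotient of a Frobenius algebra need not be Frobenius, nor does a Frobenius structure on a quotient pull back to the source (the kernel would have to be trivial, which it is not). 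The functor $\Pi$ being an extension of scalars along this surjection does not change that. Second, the structural consequences of the branching functor that would be strong enough to conclude anything about $R_{n+1}^\lambda$ itself (Theorem~\ref{thm:branchrules}, Corollary~\ref{cor:BRinj}, and the ``every projective is ${}_{e(s)}P$'' statements later on) are proved in the paper \emph{using} Theorem~\ref{thm:BK} and, at several points, Lemma~\ref{lem:frobKLR} itself; so running the induction this way is circular. Finally, two smaller issues: the claim that ``no defining relation can create or cancel a top-degree contribution asymmetrically'' is not a proof of cyclicity of $\tau$ — relation~\eqref{eq:dotslide} mixes degree strata and this needs an actual computation (this is one of the technical points in Webster's argument, where the symmetrizing trace is not the naive ``top coefficient'' but a suitable normalization of it); and before any of this one must verify finite-dimensionality, which in type $A$ requires an argument along the lines of Brundan–Kleshchev or Hoffnung–Lauda, not just the one-strand bound you quote. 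In short: the strategy ``trace plus duals via reflection'' is the right shape, but filling in nondegeneracy requires either importing the cyclotomic conjecture (as Webster does via a geometric/diagrammatic argument), or reproving it independently, and your proposed loop does not supply an independent proof.
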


\smallskip

Projective modules over $R_{n+1}^\lambda$ are defined the same way as for $R_{n+1}$, 
we write ${}_eP^{\lambda}$ for $eR_{n+1}^{\lambda}$.  
Denote by $R^{\lambda}_{n+1}-\amod$ and by $R^{\lambda}_{n+1}-\prmod$ the categories of finitely generated graded 
$R^{\lambda}_{n+1}$-modules and finitely generated graded projective $R^{\lambda}_{n+1}$-modules
respectively.  
The module category structure in the $R^{\lambda}_{n+1}-\amod$ was studied in~\cite{KK, W1}. 
Here we describe the necessary to proceed through this paper.  
Let $\imath_i\colon R_{n+1}^{\lambda}(\nu)\to R_{n+1}^{\lambda}(\nu+\alpha_i)$ be the map obtained by adding 
a vertical strand labeled $i$ on the right of a diagram from $R^{\lambda}_{n+1}$. 
The categorical $\sll$-action on $R_{n+1}^{\lambda}$ is obtained by 
the pair of biadjoint exact functors defined by 
\begin{equation}\label{eq:Uaction}
\begin{split}
F_i^{\lambda} =  & 
\colon\mspace{22mu} R^{\lambda}_{n+1}(\nu)-\amod\mspace{16mu} 
\xra{\mspace{63mu}\Ind_i\mspace{63mu}}  R^{\lambda}_{n+1}(\nu+\alpha_i)-\amod 
\\
E_i^{\lambda} = 
&\colon R^{\lambda}_{n+1}(\nu+\alpha_i)-\amod \xra{\ \Res_i\{\alpha_i^\vee(\lambda-\nu)-1\}\ }  
\mspace{20mu}R^{\lambda}_{n+1} (\nu)-\amod  
\end{split}
\end{equation}

The \emph{Khovanov-Lauda cyclotomic conjecture}~\cite{KL1} was proved by Brundan and Kleshchev~\cite{BK}
based on Ariki's categorification theorem~\cite{ariki}:  
%is a particular case of a more general result proved in~\cite{KK} and independently in~\cite{W1}.
%(the proof of a special case appeared already in~\cite{BS}). 
\begin{thm}\label{thm:BK}
There is an isomorphism of $\sll$-representations
\begin{equation*}
K_0\bigl( R_{n+1}^{\lambda}\bigr) \cong V_{\lambda}^{\sll}  .
\end{equation*}
\end{thm}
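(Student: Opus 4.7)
The plan is to proceed by induction on $n$. The base case $n=1$ corresponds to $\mathfrak{sl}_2$, where $R_2^\lambda$ is the cyclotomic nil Hecke algebra and the isomorphism $K_0(R_2^\lambda)\cong V_\lambda^{\mathfrak{sl}_2}$ is classical (it is already treated in~\cite{KL1}). The substance of the argument lies in the inductive step, which I would carry out using the categorical branching functor $\Pi$ advertised in the introduction.

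For the inductive step, fix a partition $\lambda$ with $n+1$ parts and consider the functor
\[
\Pi\colon R_{A_n}^\lambda\text{-fmod}\longrightarrow \bigoplus_{\mu\in\tau(\lambda)} R_{A_{n-1}}^\mu\text{-fmod},
\]
constructed from the factoring-idempotent machinery of Propositions~\ref{prop:onestrandn} and~\ref{prop:genfactoriz}. Its three key properties are: (i) fullness, (ii) essential bijectivity on objects, and (iii) intertwining of the categorical $\sll$-action on the source with the $\sll$-action on the target obtained by transporting $E_n^\lambda, F_n^\lambda$ through $\Pi$. Together these properties force $\Pi$ to descend to an isomorphism of $\sll$-modules on Grothendieck groups,
\[
K_0(\Pi)\colon K_0(R_{A_n}^\lambda)\xrightarrow{\ \sim\ } \bigoplus_{\mu\in\tau(\lambda)} K_0(R_{A_{n-1}}^\mu).
\]

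Applying the inductive hypothesis to each summand gives $K_0(R_{A_{n-1}}^\mu)\cong V_\mu^{\sln}$ as $\sln$-modules. Comparing with the explicit reconstruction of $V_{\cS(\lambda)}$ in Section~\ref{ssec:GT} — where $V_\lambda^{\sll}$ is assembled from $\bigoplus_{\mu\in\tau(\lambda)}V_\mu^{\sln}$ by transporting the action of $E_n, F_n$ through the branching isomorphism of~\eqref{eq:brulsalg} — the right-hand side is identified with $V_\lambda^{\sll}$, completing the induction. Note that the multiplicity-freeness of the classical branching rule is essential here: it guarantees that the transported $\sll$-action recovers an honest irreducible rather than a sum with multiplicities that would need further disambiguation.

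The principal obstacle is therefore not the abstract reduction but the construction of $\Pi$ itself with its three requisite properties. Fullness and essential bijectivity rest on the decomposition of arbitrary diagrams in $R_{n+1}^\lambda(\nu+k\alpha_n)$ supplied by Proposition~\ref{prop:genfactoriz}, coupled with the Frobenius structure of Lemma~\ref{lem:frobKLR} to extract a well-behaved projection on projective generators. Intertwining the categorical action is genuinely routine for the generators $E_i^\lambda, F_i^\lambda$ with $i<n$, since these are the ones inherited from the subalgebra $R_n\hookrightarrow R_{n+1}$; the delicate point is $E_n^\lambda, F_n^\lambda$, whose action on the target is \emph{defined} by transport through $\Pi$, so one must verify that this transported action is well-defined, exact, and satisfies the $\sll$-relations — equivalently, that the target category genuinely carries a categorical $\sll$-action compatible with $\Pi$. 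Once this is in place, the cyclotomic conjecture follows as a clean corollary of the categorical branching rule rather than from Ariki's theorem.
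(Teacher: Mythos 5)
Your argument has a circularity at the key step. You claim that $\Pi$ descends to an isomorphism on $K_0$ because it is full, essentially bijective on objects, and intertwines the categorical action, and you assert that essential bijectivity follows from Proposition~\ref{prop:genfactoriz} and Lemma~\ref{lem:frobKLR}. But injectivity on objects is precisely the content of Corollary~\ref{cor:BRinj}, whose proof in the paper invokes Theorem~\ref{thm:branchrules}, which is itself deduced by combining Proposition~\ref{prop:branchrules} with the very Theorem~\ref{thm:BK} you are trying to prove. The factoring-idempotent machinery of Subsection~\ref{ssec:factidemp} yields fullness and essential surjectivity (Lemma~\ref{lem:BRfull}), hence the surjection of Proposition~\ref{prop:branchrules}, but it does not on its own rule out that some nonzero projective ${}_{e'(p_{i_1\dotsm i_k})}P$ is sent to zero by $\Pi^\lambda$.

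The paper closes the gap differently: it uses only the surjection
\[
K_0(\Pi^\lambda)\colon K_0(R_{n+1}^\lambda) \twoheadrightarrow \bigoplus_{\xi_{i_1\dotsm i_k}\in\cD_\lambda} K_0\bigl(R_n^{\xi_{i_1\dotsm i_k}(\lambda)}\bigr)
\]
from Proposition~\ref{prop:branchrules}, combined with the a priori fact that $K_0(R_{n+1}^\lambda)$, carrying an integrable categorical $\sll$-action and generated by a highest weight vector of weight $\lambda$, is already a cyclic quotient of $V_\lambda^{\sll}$. By the inductive hypothesis and the classical branching rule the target has dimension $\dim V_\lambda^{\sll}$, so both surjections $V_\lambda^{\sll}\twoheadrightarrow K_0(R_{n+1}^\lambda)\twoheadrightarrow V_\lambda^{\sll}$ are isomorphisms. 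You could repair your proposal by replacing the appeal to essential bijectivity with this dimension comparison; as written, the isomorphism on $K_0$ is not yet available at this point in the development. (A minor remark: for the base case the paper argues directly from $1_{\bar{\mu}+1}=0$ in $R_2^\mu$ rather than deferring to~\cite{KL1}, but that is cosmetic.)
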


In Subsection~\ref{ssec:easyBK} we give an alternative, elementary proof  using the
categorical branching rule. 
Theorem~\ref{thm:BK} was subsequently extended to affine type $A$ by Brundan and Kleshchev~\cite{BK2} and to all types by 
Kang and Kashiwara~\cite{KK} and independently by Webster~\cite{W1}. 
Webster also proved that
\begin{equation*}
\gdim\Hom_{R^\lambda_{n+1}-\amod}(P,P') = \langle [P],[P'] \rangle ,
\end{equation*}
where $\langle~,~\rangle$ is the $q$-Shapovalov form.  
%\end{thm}

\smallskip

%This isomorphism sends the isomorphism class of an indecomposable ${}_{j_1\dotsm j_r}P$ to the 
%vector weight $F_{j_r}\dotsm F_{i_1}v_{\lambda}$. 

All the results in Subsection~\ref{ssec:factidemp} descend to the cyclotomic setting.
In particular they allow a presentation of the category 
$R_{n+1}^{\lambda}(\nu+k\alpha_n)-\prmod$ in terms 
of the collection of projectives
 $\{{}_{e'(p_{i_1\dotsm i_k},\und{i})}P^{\lambda}\}$ that turns out to be useful later.

%%%%%%%%%%%%%%%%%%%%%%%%%%%%%%%%%%%%%%%%
%%%                                  %%%
%%%   Categorified Branching Rules   %%%
%%%                                  %%%
%%%%%%%%%%%%%%%%%%%%%%%%%%%%%%%%%%%%%%%%
\section{Categorical branching rules}\label{sec:catbranching}

\subsection{Categorical branching rules}\label{ssec:catbranching}

We have a direct sum decomposition of algebras
\begin{equation*}
R^{\lambda}_{n+1} \cong \bigoplus\limits_{k\geq 0} R^{\lambda}_{n+1}(k\alpha_n) , 
\end{equation*}
where $R^{\lambda}_{n+1}(k\alpha_n)\subseteq R^{\lambda}_{n+1}$ is the subalgebra generated by the diagrams in 
$R^{\lambda}_{n+1}$
containing exactly $k$ strands labeled $n$.
We also have
\begin{equation*}
R^{\lambda}_{n+1}-\amod \cong \bigoplus\limits_{k\geq 0} R^{\lambda}_{n+1}(k\alpha_n)-\amod .
\end{equation*}
Clearly $R^{\lambda}_{n+1}(0)\cong R^{p_\Lambda(\lambda)}_{n}$, 
where $p_\Lambda(\lambda)\colon\Lambda_+^{n+1}\to\Lambda_+^n$ is the projection given 
in~\eqref{eq:projW}.
We want to identify each block of $R^{\lambda}_{n+1}-\amod$ with the categorification of 
 the $\sln$-representations in~\eqref{eq:brulsalg} 
in the sense that $R^{\lambda}_{n+1}(k\alpha_n)-\amod$ will give the 
$\sln$ irreducibles obtained by removing exactly $k$ boxes from the 
Young diagram for $V^\lambda_{\sll}$. 

\medskip

We start by defining a special class of idempotents in $R_{n+1}^\lambda$.

\begin{defn}
The idempotent 
$e'(p_{i_1\dotsm i_k},\und{j})\in R_{n+1}^\lambda(\nu+k\alpha_n)$  
is said to be a \emph{special idempotent},
denoted $e(p_{i_1\dotsm i_k},\und{j})$, 
if $\xi_{i_1\dotsm i_k}$ is $\lambda$-dominant.  
\end{defn}
\n The property of $\lambda$-dominancy of $\xi_{i_1\dotsm i_k}$ implies that $\nu(i_1\dotsm i_k)$ is in $\Lambda_+^{n}$.  

\medskip

In the following we give the maps between some cyclotomic KLR algebras that are necessary to obtain the categorical branching rule.  
\begin{lem}\label{lem:cycinc}
For each $k\geq 0$ there is a surjection of algebras 
\begin{equation*}
R^\lambda_{n+1}(k\alpha_n)\xra{\ \pi_k\ } \bigoplus\limits_{\xi_{i_1 \dotsm i_k}\in\cD_\lambda^k}R_n^{\xi_{i_1\dotsm i_k}(\lambda)} .
\end{equation*} 
\end{lem}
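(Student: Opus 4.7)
The plan is to construct $\pi_k$ explicitly by leveraging the idempotent factorization from Proposition~\ref{prop:genfactoriz}. By that proposition, every diagram in $R^\lambda_{n+1}(\nu+k\alpha_n)$ can be written as a $\Bbbk$-linear combination of diagrams factoring through idempotents $e'(p_{i_1\dotsm i_k},\und{j})$ with $1\leq i_1\leq\dotsm\leq i_k\leq n+1$ and $1_{\und{j}}\in R_n(\nu(i_1\dotsm i_k))$. Since $\und{j}$ contains only labels in $\{1,\dotsc,n-1\}$, the ``right part'' of such a diagram has a natural interpretation as a diagram in $R_n$.

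I would then define $\pi_k$ on a piece factoring through $e'(p_{i_1\dotsm i_k},\und{j})$ as follows. If $\xi_{i_1\dotsm i_k}\in\cD_\lambda^k$ is $\lambda$-dominant, strip off the $p_{i_1\dotsm i_k}$ block on the left and read the remainder as a diagram in the summand $R_n^{\xi_{i_1\dotsm i_k}(\lambda)}$. If $\xi_{i_1\dotsm i_k}$ is not $\lambda$-dominant, send the piece to zero. By Proposition~\ref{prop:genfactoriz} this determines $\pi_k$ on all of $R^\lambda_{n+1}(k\alpha_n)$, and surjectivity is then immediate, since every generator of each $R_n^{\xi_{i_1\dotsm i_k}(\lambda)}$ arises as the image of its obvious preimage obtained by prepending $p_{i_1\dotsm i_k}$.

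The substance of the argument is to check that $\pi_k$ is a well-defined algebra homomorphism, which amounts to two things. First, non-special idempotents $e'(p_{i_1\dotsm i_k},\und{j})$ must already be zero in $R^\lambda_{n+1}$. Failure of $\lambda$-dominancy means that at some intermediate stage $m$ the relevant weight vanishes, i.e.\ removing a box from row $i_m$ violates the partition condition; by repeated application of~\eqref{eq:dotslides} and~\eqref{eq:R3} one can propagate dots from the right-hand side back onto the leftmost strand labeled $i_m$ (or, after stripping, the appropriate leftmost strand), where the ideal $I^\lambda$ forces vanishing. Second, the local relations~\eqref{eq:R2}--\eqref{eq:dotslide} on the $\und{j}$ strands in source and target agree, so only the target cyclotomic relation requires verification: placing $\xi_{i_1\dotsm i_k}(\llambda)_{j_1}$ dots on the leftmost strand of $\und{j}$ must yield zero in the image. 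One dot-slides these dots leftward through the $p_{i_1\dotsm i_k}$ block via~\eqref{eq:dotslides}; each crossing with a strand carrying label $j_1$, $j_1+1$ or $j_1-1$ produces a controlled correction term, and the net shift in dot count is precisely the combinatorial difference $\llambda_{i_1}-\xi_{i_1\dotsm i_k}(\llambda)_{j_1}$ encoded in the definition of $\xi_{i_1\dotsm i_k}$, so the leftmost strand ends up carrying $\llambda_{i_1}$ dots modulo pieces that vanish by induction on $k$.

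The hard part is the second of these: the dot-counting through the $p_{i_1\dotsm i_k}$ block is delicate because each $p_{i_m}$ contains a long string of consecutive labels $i_m,i_m+1,\dotsc,n$, and one must track how each of these interacts, in turn, with the dot being slid — the number of strands in the block that are equal to, or adjacent to, $j_1$ must conspire with the inductive structure of $\xi_{i_1\dotsm i_k}$ to reproduce exactly the $\lambda$-cyclotomic relation. I expect this to be handled by induction on $k$, the case $k=1$ reducing to a direct check using only~\eqref{eq:dotslides} and the description of $\xi_{i_1}$.
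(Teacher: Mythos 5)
Your proposal correctly locates the heart of the lemma: the verification that sliding the cyclotomic dot power through the $p_{i_1\dotsm i_k}$ block reproduces the shifted cyclotomic relation for $R_n^{\xi_{i_1\dotsm i_k}(\lambda)}$ plus controlled error terms. That is exactly what the paper's proof spends essentially all of its effort on, via the case analysis of $X_i(j,r_j)$ for $j<i-1$, $j=i-1$, $j=i$ and $i<j<n$ with $r_j=\bar\lambda_j$, done first for $k=1$ and then recursively. Where your argument has a genuine gap is in the construction of $\pi_k$. Proposition~\ref{prop:genfactoriz} only factors the \emph{idempotents} $1_{\und{j}}$ as sums of elements of $R^\lambda_{n+1}\,e'\,R^\lambda_{n+1}$; it does not decompose arbitrary elements, the decomposition is not unique, and, most importantly, an element of $R^\lambda_{n+1}\,e'(p_{i_1\dotsm i_k},\und{j})\,R^\lambda_{n+1}$ is merely a diagram admitting $e'$ as an intermediate horizontal slice --- it need not have a $p_{i_1\dotsm i_k}$ block running parallel and dot-free along its left edge, so ``strip off the $p$ block and read the rest in $R_n$'' is simply not defined on such a piece. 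You therefore do not obtain a map this way, and surjectivity is certainly not ``immediate''.

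The paper avoids this by turning the construction around: it passes to the corner $A_{i_1\dotsm i_k}=e\,R^\lambda_{n+1}(k\alpha_n)\,e$ with $e=\sum_{\und{j}}e(p_{i_1\dotsm i_k},\und{j})$, distinguishes inside it the subalgebra $\widetilde{A}_{i_1\dotsm i_k}$ of genuine block diagrams together with a vector-space complement $\widetilde{A}^\bot_{i_1\dotsm i_k}$, identifies $R_n^{\xi_{i_1\dotsm i_k}(\lambda)}$ with the $\xi_{i_1\dotsm i_k}(\lambda)$-cyclotomic quotient of $\widetilde{A}_{i_1\dotsm i_k}$, and then shows that the relation $x_{1,\und{j}}^{\bar\lambda_{j_1}}=0$ projects to the relation $x_{1,\und{j}}^{\bar{\xi_{i_1\dotsm i_k}(\lambda)}_{j_1}}=0$ modulo $\widetilde{A}^\bot_{i_1\dotsm i_k}$. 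The $\widetilde{A}/\widetilde{A}^\bot$ decomposition is precisely the ``place to put the correction terms'' that your description of the hard part implicitly relies on but never supplies. Your side claim that the non-$\lambda$-dominant $e'(p_{i_1\dotsm i_k},\und{j})$ already vanish in $R^\lambda_{n+1}$ is plausible, but the dot-propagation sketch you give is not a proof; the paper instead disposes of the non-dominant degenerations inside the main computation (e.g.\ observing $X_i(i,r_i)=0$ for all $r_i$ when $\bar\lambda_i\in\{0,1\}$).
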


\begin{proof}
We first prove that for each $\xi_{i_1 \dotsm i_k}\colon\Lambda_+^{n+1}\to\Lambda_+^n$ 
we have a surjection of algebras 
$$R_{n+1}^\lambda(k\alpha_n)\xra{\ \pi_{i_1\dotsm i_k}\ }R_n^{\xi_{i_1\dotsm i_k}(\lambda)}.$$
To this end it is enough to show that for each $\xi_{i_1\dotsm i_k}$ as above, the subalgebra 
\begin{equation*} 
A_{i_1\dotsm i_k}:= 
\bigoplus\limits_{\und{r},\und{s}\in\seq\{\alpha_1,\dotsc ,\alpha_n\}}
e(p_{i_1\dotsm i_k},\und{r})
\bigl(R^\lambda_{n+1}(k\alpha_{n-1})\bigr)
e(p_{i_1\dotsm i_k},\und{s})
\end{equation*} 
projects onto  $R_n^{\xi_{i_1\dotsm i_k}(\lambda)}$.
Let 
$\widetilde{A}_{i_1\dotsm i_k}\subset {A}_{i_1\dotsm i_k}$
be the subalgebra generated by 
all diagrams having a representative given by diagrams consisting of $k$ blocks of vertical strands 
on the left, where the $n-i_r+1$ strands which belong to the $r$th block from the left are labeled 
$i_r, i_r+1,\dotsc , n$ in that order, as below
\begin{equation*}
\labellist
\small
\pinlabel $R^\lambda_{n+1}(0.\alpha_n)$ at 429 90  
\pinlabel $\dotsc$ at  33 90   \pinlabel $\dotsc$ at 430 145 
\pinlabel $\dotsc$ at 118 90   \pinlabel $\dotsc$ at 430  35 
\pinlabel $\dotsc$ at 300 90 
\pinlabel $\dotsc$ at 185 90 
\pinlabel $\dotsc$ at 230 90  
\pinlabel $\lambda$ at -30 95 
\tiny \hair 2pt
\pinlabel $i_1$ at   5  5  \pinlabel $n$ at  59  5 
\pinlabel $i_2$ at  92  5  \pinlabel $n$ at 145  5
\pinlabel $i_k$ at 272  5  \pinlabel $n$ at 326  5
\endlabellist 
\figins{-20.5}{1.0}{box-cycq}
\end{equation*}
and let $\widetilde{A}_{i_1\dotsm i_k}^\bot$ be its complement vector space. 
Let also $\widetilde{A}_{i_1\dotsm i_k}^\lambda$ be the quotient of $\widetilde{A}_{i_1\dotsm i_k}$ by the two sided ideal 
generated by all diagrams of the form 
\begin{equation*}
\labellist
\pinlabel $\dotsc$ at 445 90 
\small
\pinlabel $\dotsc$ at  33 90   
\pinlabel $\dotsc$ at 118 90   
\pinlabel $\dotsc$ at 300 90 
\pinlabel $\dotsc$ at 185 90 
\pinlabel $\dotsc$ at 230 90 
\pinlabel $\lambda$ at -30 95
\tiny \hair 2pt
\pinlabel $i_1$ at   5  5  \pinlabel $n$ at  59  5 
\pinlabel $i_2$ at  92  5  \pinlabel $n$ at 145  5
\pinlabel $i_k$ at 272  5  \pinlabel $n$ at 326  5
\pinlabel $n$ at 326  5  
\pinlabel $j_1$ at 386  5 
\pinlabel $j_{\ell}$ at 500  5 
\pinlabel $\zeta_{j_1}$ at 364  96  
\endlabellist 
\figins{-20.5}{1.0}{A-cycl}
\end{equation*}
where $\zeta=\bar{\xi_{i_1\dotsm i_k}(\lambda)}$. 
The algebras $\widetilde{A}_{i_1\dotsm i_k}^\lambda$ and $R_n^{\xi_{i_1\dotsm i_k}(\lambda)}$ 
are isomorphic.

We start with the case $k=1$ and use it to prove the general case 
by recursion.   
To this end we compute
\begin{equation*}
X_i(j,r_j) = \mspace{14mu}
\labellist
\pinlabel $\dotsc$ at   95 63    
\pinlabel $\dotsc$ at  262 63 
\pinlabel $\lambda$ at -15 95 
\tiny \hair 2pt
\pinlabel $r_j$    at  -9   65
\pinlabel $i$      at  28    1 
\pinlabel $i+1$    at  59    0 
\pinlabel $n$      at 128    0
\pinlabel $j$      at 158   -1
\endlabellist
\figins{-30}{0.85}{cycR2}
\mspace{24mu}
\end{equation*}
We have several cases to consider, 
$j<i-1$, 
$j=i-1$, 
$j=i$ and 
$i<j<n$.  

For $j<i-1$ we have 
\begin{equation*}
X_i(j,r_j) =\mspace{22mu}   
\labellist
\pinlabel $\dotsc$ at   85 63    
\pinlabel $\dotsc$ at  252 63    
\pinlabel $\lambda$ at -10 75
\tiny \hair 2pt
\pinlabel $r_j$    at 130   65
\pinlabel $i$      at  15    1 
\pinlabel $i+1$    at  46    0 
\pinlabel $n$      at 115    0
\pinlabel $j$      at 145   -1
\endlabellist
\figins{-30}{0.85}{cycR2j}
\mspace{24mu}
\end{equation*}
which follow easily from relations~\eqref{eq:R2} and~\eqref{eq:dotslide}. 

For $j=i-1$ we have 
\begin{align*}
X_i(i-1,r_{i-1}) &= \mspace{10mu}
\labellist
\pinlabel $\dotsc$ at   95 63    
\pinlabel $\dotsc$ at  242 63 
\pinlabel $\lambda$ at -10 85
\tiny \hair 2pt
\pinlabel $r_{i-1}$ at 150  36 
\pinlabel $i$      at  24   1 
\pinlabel $i+1$    at  55   0 
\pinlabel $n$      at 124   0
\pinlabel $i-1$    at 156   0
\endlabellist
\figins{-30}{0.85}{cycR2jj}
\mspace{38mu} 
\displaybreak[0]\\[1ex]
&= \mspace{16mu}
\labellist
\pinlabel $\dotsc$ at   95 63    
\pinlabel $\dotsc$ at  232 63    
\pinlabel $\lambda$ at -10 85
\tiny \hair 2pt
\pinlabel $r_{i-1}$ at 145  36 
\pinlabel $i$      at  15   1 
\pinlabel $i+1$    at  46   0 
\pinlabel $n$      at 115   0
\pinlabel $i-1$    at 147   0
\endlabellist 
\figins{-30}{0.85}{cycR2dvert} 
\mspace{34mu}
+
\mspace{15mu}
\labellist
\pinlabel $\dotsc$ at   95 63    
\pinlabel $\dotsc$ at  232 63 
\pinlabel $\lambda$ at -10 85 
\tiny \hair 2pt
\pinlabel $r_{i-1}$ at 145  36 
\pinlabel $i$      at  15   1 
\pinlabel $i+1$    at  46   0 
\pinlabel $n$      at 115   0
\pinlabel $i-1$    at 147   0
\endlabellist 
\figins{-30}{0.85}{cycR2dvertt} 
\mspace{24mu}
\end{align*}
where we used relation~\eqref{eq:R2}. 
Sliding the newly created dot close to the original $r_{i-1}$ dots on the first term 
and applying relation~\eqref{eq:R2} to both terms  
we get
\begin{equation}\label{eq:cyciminus}
X_i(i-1,r_{i-1}) =\mspace{14mu}   
\labellist
\pinlabel $\dotsc$ at   85 63    
\pinlabel $\dotsc$ at  242 63 
 \pinlabel $\lambda$ at -10 85
\tiny \hair 2pt
\pinlabel $\text{\rotatebox{82}{$r_{i-1}+1$}}$ at 160 85
\pinlabel $i$      at  15    1 
\pinlabel $i+1$    at  46    0 
\pinlabel $n$      at 115    0
\pinlabel $i-1$    at 147    0
\endlabellist
\figins{-30}{0.85}{cycR2j}
\mspace{44mu}
+
\mspace{14mu}   
\labellist
\pinlabel $\dotsc$ at   85 63    
\pinlabel $\dotsc$ at  242 63 
\pinlabel $\lambda$ at -10 85
\tiny \hair 2pt
\pinlabel $\text{\rotatebox{82}{$r_{i-1}$}}$ at 160 85
\pinlabel $i$      at  15    1 
\pinlabel $i+1$    at  46    0 
\pinlabel $n$      at 115    0
\pinlabel $i-1$    at 147    0
\endlabellist
\figins{-30}{0.85}{cycR2j1}
\mspace{24mu}
\end{equation}
which consists of a term in $\widetilde{A}_i$ and a term in $\widetilde{A}_i^\bot$.  

For $j=i$ we compute
\begin{equation*}
X_i(i,r_{i}) = \mspace{22mu}  
\labellist
\pinlabel $\dotsc$ at   85 63    
\pinlabel $\dotsc$ at  252 63    
\pinlabel $\lambda$ at -15 95
\tiny \hair 2pt
\pinlabel $r_i$    at -10   65
\pinlabel $i$      at  28    1 
\pinlabel $i+1$    at  59    0 
\pinlabel $n$      at 128    -1
\pinlabel $i$      at 157    0
\endlabellist
\figins{-30}{0.85}{cycR2}
\mspace{38mu} 
= 
\sum_{\ell_1+\ell_2 = r_i-1}
\labellist
\pinlabel $\dotsc$ at   95 63    
\pinlabel $\dotsc$ at  232 63   
\pinlabel $\lambda$ at -10 75 
\tiny \hair 2pt
\pinlabel $i$      at  15   1 
\pinlabel $i+1$    at  46   0 
\pinlabel $n$      at 115   0
\pinlabel $i$      at 147   0
\pinlabel $\ell_1$ at 32 24 \pinlabel $\ell_2$ at 44 61
\endlabellist 
\figins{-30}{0.85}{cycR2ii-dd} 
\mspace{24mu}
\end{equation*}
where we used relation~\eqref{eq:dotslides} followed by~\eqref{eq:R2}.  
Using~\eqref{eq:dotslides} to slide the $\ell_2$ dots to to upper part of the first strand from the left gives 

\begin{equation*}
X_i(i,r_{i}) =  
\sum_{\ell_1+\ell_2 = r_i-1}
\left(
\labellist
\pinlabel $\dotsc$ at   95 63    
\pinlabel $\dotsc$ at  232 63 
\pinlabel $\lambda$ at 0 75
\tiny \hair 2pt
\pinlabel $i$      at  15   1 
\pinlabel $i+1$    at  46   0 
\pinlabel $n$      at 115   -1
\pinlabel $i$      at 147   0
\pinlabel $\ell_1$ at 5 34 \pinlabel $\ell_2$ at 5 101
\endlabellist 
\figins{-30}{0.85}{cycR2ii-ud} 
\mspace{36mu} +
\sum_{\kappa_1+\kappa_2=\ell_2-1}
\labellist
\pinlabel $\dotsc$ at   85 63    
\pinlabel $\dotsc$ at  232 63    
\pinlabel $\lambda$ at -10 75
\tiny \hair 2pt
\pinlabel $\ell_1+\kappa_1$ at -15 98 \pinlabel $\kappa_2$ at 33 42 
\pinlabel $i$       at  15   1 
\pinlabel $i+1$     at  46   0 
\pinlabel $n$       at 115   -1
\pinlabel $i$       at 147   0
\endlabellist 
\figins{-30}{0.85}{cycR2vertd} 
\mspace{32mu}
\right)
\end{equation*}
For $\bar{\lambda}_i=1,0$ we have $X_i(i,r_i)=0$ for all $r_i\geq 0$ and so it is enough to consider the 
case $r_i>1$ here. 
For $r_i>1$ the first term is in $\widetilde{A}_i^\bot$ and the second term is in $\widetilde{A}_i^\bot$ unless 
$\ell_1=\kappa_1=0$. This results in 
\begin{align*}
X_i(i,r_{i}) 
&=  \mspace{26mu}
\labellist
\pinlabel $\dotsc$ at   85 63    
\pinlabel $\dotsc$ at  232 63   
\pinlabel $\lambda$ at -10 75
\tiny \hair 2pt
\pinlabel $\text{\rotatebox{82}{$r_i-2$}}$ at 27 82 
\pinlabel $i$       at  15   1 
\pinlabel $i+1$     at  46   0 
\pinlabel $n$       at 115   -1
\pinlabel $i$       at 147   0
\endlabellist 
\figins{-30}{0.85}{cycR2vertdd} 
\mspace{38mu}
+\text{ terms in }\widetilde{A}_i^\bot 
\displaybreak[0]\\[1ex]
&\stackrel{\eqref{eq:cyciminus}}{=}
\mspace{20mu}
\labellist
\pinlabel $\dotsc$ at   85 63    
\pinlabel $\dotsc$ at  242 63 
\pinlabel $\lambda$ at -10 75
\tiny \hair 2pt
\pinlabel $\text{\rotatebox{82}{$r_{i}-1$}}$ at 160 85
\pinlabel $i$      at  15    1 
\pinlabel $i+1$    at  46    0 
\pinlabel $n$      at 115    -1
\pinlabel $i$      at 147    0
\endlabellist
\figins{-30}{0.85}{cycR2j}
\mspace{38mu}
+\text{ terms in }\widetilde{A}_i^\bot 
\end{align*}

Finally for $i<j<n$ we have 
\begin{align*}
X_i(j,r_{j}) &= \mspace{28mu}  
\labellist
\pinlabel $\dotsc$ at   85 63    
\pinlabel $\dotsc$ at  252 63    
\pinlabel $\lambda$ at -25 90 
\tiny \hair 2pt
\pinlabel $r_j$    at -10   65
\pinlabel $i$      at  28    1 
\pinlabel $i+1$    at  59    0 
\pinlabel $n$      at 128    -1
\pinlabel $j$      at 157    -1
\endlabellist
\figins{-30}{0.85}{cycR2}
\mspace{38mu} = \mspace{29mu}
\labellist
\pinlabel $\dotsc$ at   25 63   
\pinlabel $\dotsc$ at  115 63    
\pinlabel $\dotsc$ at  262 63    
\pinlabel $\lambda$ at -20 85 
\tiny \hair 2pt
\pinlabel $r_j$    at  40   40
\pinlabel $i$      at   2   1 
\pinlabel $j-1$    at  56   0 
\pinlabel $j$      at  88   0 
\pinlabel $n$      at 145   0
\pinlabel $j$      at 175   0
\endlabellist 
\figins{-30}{0.85}{cycR2jvertd} 
\mspace{24mu}
\displaybreak[0]\\[1ex]
&= \mspace{25mu}
\labellist
\pinlabel $\dotsc$ at   30 63   
\pinlabel $\dotsc$ at  115 63    
\pinlabel $\dotsc$ at  262 63 
\pinlabel $\lambda$ at -20 85 
\tiny \hair 2pt
\pinlabel $r_j$    at  75   40
\pinlabel $i$      at   2   1 
\pinlabel $j-1$    at  56   0 
\pinlabel $j$      at  88   0 
\pinlabel $n$      at 145   0
\pinlabel $j$      at 175   0
\endlabellist 
\figins{-30}{0.85}{cycR2jdvert} 
\mspace{36mu} + \mspace{22mu}
\labellist
\pinlabel $\dotsc$ at   30 63   
\pinlabel $\dotsc$ at  115 63    
\pinlabel $\dotsc$ at  262 63   
\pinlabel $\lambda$ at -20 85 
\tiny \hair 2pt
\pinlabel $\text{\rotatebox{75}{$r_j+1$}}$  at  72 95
\pinlabel $i$      at   2   1 
\pinlabel $j-1$    at  56   0 
\pinlabel $j$      at  88   0 
\pinlabel $n$      at 145   0
\pinlabel $j$      at 175   0
\endlabellist 
\figins{-30}{0.85}{cycR2jjdvert} 
\mspace{24mu}
\end{align*}
The first term is in $\widetilde{A}_i^\bot$. For the second we use the result of the case of $i=j$, done above, 
to obtain
\begin{equation*}
X_i(j,r_j) =\mspace{22mu}   
\labellist
\pinlabel $\dotsc$ at   85 63    
\pinlabel $\dotsc$ at  252 63    
\pinlabel $\lambda$ at -10 75
\tiny \hair 2pt
\pinlabel $r_j$    at 130   65
\pinlabel $i$      at  15    1 
\pinlabel $i+1$    at  46    0 
\pinlabel $n$      at 115    0
\pinlabel $j$      at 145   -1
\endlabellist
\figins{-30}{0.85}{cycR2j}
\mspace{52mu}
+\text{ terms in }\widetilde{A}_i^\bot 
\end{equation*}

Taking $r_i=\bar{\lambda}_i$ we see that $X_i(j,\bar{\lambda}_j)$ consists 
of a sum of a term in $R_n^{\xi_i(\lambda)}$ 
with terms in $\widetilde{A}_i^\bot$, which shows that 
$R_{n+1}^\lambda(\alpha_n)$ projects onto $R_n^{\xi_i(\lambda)}$. 
We call this projection $\pi_i$.  The kernel of $\pi_i$ is the two-sided ideal 
generated by the elements in $\widetilde{A}_i^\bot$ involved above.  
Proceeding recursively one gets that $\pi_{i_1\dotsm i_k}$ is a surjection of algebras. 
The lemma now follows from the observation that $R_{n+1}^\lambda(k\alpha_n)$ projects canonically onto
 $\bigoplus\limits_{\xi_{i_1\dotsm i_k}\in\cD_\lambda^k}{A}_{i_1\dotsm i_k}$.  
\end{proof}

Summing over $k$ in Lemma~\ref{lem:cycinc} we have the following.
\begin{cor}
We have a surjection of algebras
\begin{equation*} 
 R^\lambda_{n+1} 
\xra{\ \ \pi^\lambda \ \ }
\bigoplus\limits_{\xi_{i_1\dotsm i_k}\in\cD_\lambda}R^{\xi_{i_1\dotsm i_k}(\lambda)}_n  .
\end{equation*}
\end{cor}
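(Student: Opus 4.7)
The plan is very short: the corollary is essentially a packaging of Lemma~\ref{lem:cycinc} across all values of $k\geq 0$. The two ingredients to combine are the block decomposition
\begin{equation*}
R^{\lambda}_{n+1} \cong \bigoplus\limits_{k\geq 0} R^{\lambda}_{n+1}(k\alpha_n)
\end{equation*}
noted at the start of Subsection~\ref{ssec:catbranching}, and the stratification $\cD_\lambda=\bigcup_{k\geq 0}\cD_\lambda^k$ introduced in Subsection~\ref{ssec:brules}. The latter tells us that the target in the statement splits as
\begin{equation*}
\bigoplus\limits_{\xi_{i_1\dotsm i_k}\in\cD_\lambda}R_n^{\xi_{i_1\dotsm i_k}(\lambda)}
\cong
\bigoplus\limits_{k\geq 0}\bigoplus\limits_{\xi_{i_1\dotsm i_k}\in\cD_\lambda^k}R_n^{\xi_{i_1\dotsm i_k}(\lambda)}.
\end{equation*}

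I would then define $\pi^\lambda$ as the direct sum $\bigoplus_{k\geq 0}\pi_k$ of the surjections produced by Lemma~\ref{lem:cycinc}. Because both source and target decompose compatibly as direct sums of subalgebras indexed by $k$, and each $\pi_k$ is an algebra homomorphism by the lemma, the resulting map $\pi^\lambda$ is a homomorphism of algebras. Surjectivity is immediate: any element on the right-hand side has only finitely many nonzero components, each of which is hit by the corresponding $\pi_k$.

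There is really no obstacle here beyond observing that all the indexing conventions line up; the content of the statement is entirely carried by Lemma~\ref{lem:cycinc}, and no further diagrammatic manipulation is required.
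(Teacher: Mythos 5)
Your proposal is correct and is exactly the paper's argument: the paper prefaces the corollary with the one-line remark ``Summing over $k$ in Lemma~\ref{lem:cycinc} we have the following,'' and your write-up simply spells out the two compatible direct-sum decompositions (of $R^\lambda_{n+1}$ by the $k\alpha_n$-blocks and of $\cD_\lambda$ by $\cD_\lambda^k$) that make that summation an algebra surjection.
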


\medskip

Fix a $k\geq 1$ and 
let 
\begin{align*}
\Pi_k^\lambda:= \ext_k^\lambda
\colon
R^\lambda_{n+1}(k\alpha_n)-\amod
&\to
\bigoplus\limits_{\xi_{i_1 \dotsm i_k}\in\cD_\lambda^k}R^{\xi_{i_1\dotsm i_k}(\lambda)}_n-\amod 
\\[1ex]
 M &\mapsto M \otimes_{R^\lambda_{n+1}(k\alpha_n)}
\Bigl(\oplus_{\xi_{i_1 \dotsm i_k}}R^{\xi_{i_1\dotsm i_k}(\lambda)}_n\Bigr)
\end{align*}
and $\res_k^\lambda\colon\bigl(\oplus_{\xi_{i_1 \dotsm i_k}}R^{\xi_{i_1\dotsm i_k}(\lambda)}_n\bigr)-\amod 
\to 
R^\lambda_{n+1}(k\alpha_n)-\amod$ 
be respectively the functors of extension of scalars and restriction of scalars 
by the map $\pi_k$ from Lemma~\ref{lem:cycinc}.

Using the surjections 
$\pi_{i_1\dotsm i_k}\colon R_{n+1}^{\lambda}(k\alpha_n)\to R_n^{\xi_{i_1\dotsm i_k}(\lambda)}$
for each $\xi_{i_1\dotsm i_k}$ that are inherited from the map $\pi_k$ from Lemma~\ref{lem:cycinc} we call 
\begin{align*}
\Pi_{i_1\dotsm i_k}^\lambda\colon R_{n+1}^{\lambda}(k\alpha_n)-\amod &\to R_n^{\xi_{i_1\dotsm i_k}(\lambda)}-\amod 
\intertext{and} 
\res_{i_1\dotsm i_k}^\lambda\colon R_n^{\xi_{i_1\dotsm i_k}(\lambda)}-\amod &\to R_{n+1}^{\lambda}(k\alpha_n)-\amod
\end{align*} 
the \emph{components} of $\Pi_k^\lambda$ and $\res_k^\lambda$.
Functors  $\Pi_{i_1\dotsm i_k}^\lambda$ take projectives to projectives. 

\begin{lem}\label{lem:BR-biadj}
The functors $\Pi_k^\lambda$ and $\res_k^\lambda$ are biadjoint.
\end{lem}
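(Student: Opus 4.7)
By construction $\Pi_k^\lambda$ is extension of scalars along the surjection
$\pi_k\colon R^\lambda_{n+1}(k\alpha_n)\twoheadrightarrow \bigoplus_{\xi}R^{\xi(\lambda)}_n$
of Lemma~\ref{lem:cycinc}, while $\res_k^\lambda$ is restriction of scalars along the same map.
The standard tensor-hom adjunction immediately yields that $\Pi_k^\lambda$ is left adjoint to
$\res_k^\lambda$, so the content of the lemma lies in establishing the opposite adjunction.

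Writing $A:=R^\lambda_{n+1}(k\alpha_n)$ and $B:=\bigoplus_{\xi}R^{\xi(\lambda)}_n$, the right adjoint
of $\res_k^\lambda$ is always coextension of scalars, $\Hom_A(B,-)$. My plan is therefore to produce a natural
isomorphism of $(B,A)$-bimodules $B\cong \Hom_A(B,A)$, possibly up to an internal grading
shift, since then $B\otimes_A M\cong \Hom_A(B,A)\otimes_A M\cong \Hom_A(B,M)$ functorially in the right
$A$-module $M$, so that extension and coextension of scalars coincide.

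The essential input is Lemma~\ref{lem:frobKLR}: $R^\lambda_{n+1}$ is graded Frobenius, whence so is
each block $A=R^\lambda_{n+1}(k\alpha_n)$ and likewise each summand of $B$. Writing $B=A/\ker\pi_k$,
one has $\Hom_A(B,A)=\mathrm{Ann}_A(\ker\pi_k)$ as a $(B,A)$-subbimodule of $A$. The Frobenius form
$\sigma_A$ on $A$ yields a bimodule identification
$\mathrm{Ann}_A(\ker\pi_k)\cong B^\ast:=\Hom_\Bbbk(B,\Bbbk)$, twisted by the Nakayama automorphism of $A$.
The Frobenius structure on $B$ in turn identifies $B^\ast\cong B$ as $(B,B)$-bimodules, producing the
desired bimodule isomorphism and hence the right adjunction.

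The main obstacle is the compatibility of the two Frobenius forms: one must verify that $\sigma_A$
descends, up to a controlled grading shift, to (a scalar multiple of) $\sigma_B$ on the quotient
$B=A/\ker\pi_k$, so that the two identifications above cohere. For this I would exploit the explicit
description of $\ker\pi_k$ from the proof of Lemma~\ref{lem:cycinc} (namely the contributions from the
subspaces $\widetilde{A}_{i_1\dotsm i_k}^\bot$), together with the presentation of $A$ via the special
idempotents $e(p_{i_1\dotsm i_k},\und{i})$ furnished by Proposition~\ref{prop:genfactoriz}. These
reduce the comparison of Frobenius forms to a calculation on each diagonal block
$\widetilde{A}^\lambda_{i_1\dotsm i_k}$, which is isomorphic to the corresponding summand
$R_n^{\xi_{i_1\dotsm i_k}(\lambda)}$ of $B$.
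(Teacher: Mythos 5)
Your proposal takes a genuinely different route from the paper. The paper's proof is concrete and diagrammatic: it observes that it suffices to compare the two functors on projectives, then describes $\Pi_k^\lambda({}_{\und{i}}P)$ explicitly as a span of diagrams ending in $\und{i}$ and starting in a special idempotent $e(p_{i_1\dotsm i_k},\und{i}')$ with the $p$-strands dotless and non-crossing, notes that the action of $\oplus R_n^{\xi(\lambda)}$ on this space by bottom composition through $\widetilde A^{\xi(\lambda)}_{i_1\dotsm i_k}$ is exactly the definition of $\Cxt_k^\lambda$ on projectives, and concludes. Your approach instead seeks the abstract bimodule isomorphism $B\cong\Hom_A(B,A)$ via Frobenius duality. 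Conceptually this is the cleaner and more transportable argument, and it would generalize to any surjection of graded symmetric Frobenius algebras; the paper's argument buys explicitness and avoids having to track Nakayama automorphisms and grading shifts.

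However, as written your proposal has a real gap: the crucial step is announced as a plan, not carried out. You write that you would ``produce'' the bimodule isomorphism and later that ``one must verify'' the compatibility of $\sigma_A$ and $\sigma_B$, but neither verification appears. Worse, the verification is not a formality, because Lemma~\ref{lem:frobKLR} as cited gives only that $R_{n+1}^\lambda$ is graded \emph{Frobenius}, not graded \emph{symmetric}. In your chain $\mathrm{Ann}_A(\ker\pi_k)\cong B^\ast\cong B$, the first isomorphism is twisted by the Nakayama automorphism of $A$ and the second by that of $B$; for these twists to cancel and yield an untwisted $(B,A)$-bimodule isomorphism $B\cong\Hom_A(B,A)$ (even up to shift), one needs either that both algebras are graded symmetric (a stronger fact than what the paper's Lemma~\ref{lem:frobKLR} supplies, though true for cyclotomic KLR algebras) or an explicit comparison showing $\pi_k$ intertwines the Nakayama automorphisms. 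There is also a sign/side issue to settle: since the paper works with right modules, $\Hom_A(B,A)\cong\mathrm{Ann}_\ell(\ker\pi_k)$, and for a non-symmetric Frobenius algebra the left and right annihilators of a two-sided ideal differ, so the identification with $B^\ast$ uses the form on the correct side. None of this is insurmountable, but until it is written out your argument is a sketch, whereas the paper's diagrammatic computation already lands the result.
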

\begin{proof}
The functor $\Pi_k^\lambda$ is left adjoint to $\res_k^\lambda$ by definition.  
We have to show that it is also its right adjoint. In other words, we have to show that 
it  coincides with the functor of \emph{co-extension of scalars} by $\pi_k$: 
\begin{align*}
\Cxt_k^\lambda
\colon
R^\lambda_{n+1}(k\alpha_n)-\amod
&\to
\bigoplus\limits_{\xi_{i_1 \dotsm i_k}\in\cD_\lambda^k}R^{\xi_{i_1\dotsm i_k}(\lambda)}_n-\amod 
\\[1ex]
 M &\mapsto \Hom_ {R^\lambda_{n+1}(k\alpha_n)}
\Bigl(\oplus_{\xi_{i_1 \dotsm i_k}}R^{\xi_{i_1\dotsm i_k}(\lambda)}_n, M\Bigr)
\end{align*} 
Since every object in these categories have a presentation
by projectives 
it is enough to show that they coincide as functors
from $R^\lambda_{n+1}(k\alpha_n)-\prmod$
to
$\bigl(\oplus_{\xi_{i_1 \dotsm i_k\in\cD_\lambda^k}}R^{\xi_{i_1\dotsm i_k}(\lambda)}_n\bigr)-\prmod$. 

For a projective ${}_{\und{i}}P$ in $R^\lambda_{n+1}(k\alpha_n)-\prmod$  
the projective $\Pi_k^\lambda({}_{\und{i}}P)$ has a basis given by all the diagrams that 
start in a sequence determined by $e(p_{i_1\dotsm i_k},\und{i}')$
and end in the sequence $\und{i}$, where the strands corresponding to $p_{i_1\dotsm i_k}$ 
do not carry any dots and are not allowed to cross among themselves. 
The algebra $\oplus_{\xi_{i_1 \dotsm i_k\in\cD_\lambda^k}}R^{\xi_{i_1\dotsm i_k}(\lambda)}_n$ acts by 
inclusion in $\oplus_{\xi_{i_1 \dotsm i_k\in\cD_\lambda^k}}\widetilde{A}^{\xi_{i_1\dotsm i_k}(\lambda)}$ followed 
by composition on the bottom of a diagram from $\Pi_k^\lambda({}_{\und{i}}P)$. 
This coincides with the definition of the functor of co-extension by $\pi_k$ on objects.
The same argument works in the check that both functors coincide on morphisms as well.  
\end{proof}

\begin{lem}\label{lem:BRfull}
The functor $\Pi_k^\lambda$ is full and essentially surjective.
\end{lem}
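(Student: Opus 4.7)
The plan is to deduce both properties from Lemma~\ref{lem:cycinc} (surjectivity of $\pi_k$) together with the biadjointness established in Lemma~\ref{lem:BR-biadj}. Throughout, I write $I := \ker(\pi_k) \subset R^\lambda_{n+1}(k\alpha_n)$, so that $\Pi_k^\lambda(M) = M/IM$ for every $M \in R^\lambda_{n+1}(k\alpha_n)-\amod$.

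For essential surjectivity, fix $N$ in the target category. The $R^\lambda_{n+1}(k\alpha_n)$-action on $\res_k^\lambda(N)$ factors through $\pi_k$, so $I$ annihilates $\res_k^\lambda(N)$, and consequently
\begin{equation*}
\Pi_k^\lambda(\res_k^\lambda N) \;=\; \res_k^\lambda(N) \otimes_{R^\lambda_{n+1}(k\alpha_n)} \bigoplus\nolimits_\xi R^{\xi(\lambda)}_n \;\cong\; N.
\end{equation*}
This in fact yields a natural isomorphism $\Pi_k^\lambda \circ \res_k^\lambda \cong \mathrm{id}$, so every $N$ lies in the essential image.

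For fullness, let $q_M \colon M \to M/IM$ be the unit of $\Pi_k^\lambda \dashv \res_k^\lambda$, and let $\epsilon_M \colon M/IM \to M$ be the counit of the second adjunction $\res_k^\lambda \dashv \Pi_k^\lambda$ furnished by Lemma~\ref{lem:BR-biadj}. The corresponding unit $\eta_N \colon N \to \Pi_k^\lambda \res_k^\lambda N$ is the identity (via the natural isomorphism above), so the triangle identity $\Pi_k^\lambda(\epsilon_M) \circ \eta_{\Pi_k^\lambda M} = \mathrm{id}_{\Pi_k^\lambda M}$ forces $q_M \circ \epsilon_M = \Pi_k^\lambda(\epsilon_M) = \mathrm{id}_{M/IM}$. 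Thus $\epsilon_M$ is a natural section of the quotient map $q_M$. Given any $f \colon \Pi_k^\lambda M \to \Pi_k^\lambda M'$, the composite $g := \epsilon_{M'} \circ f \circ q_M \colon M \to M'$ then satisfies
\begin{equation*}
\Pi_k^\lambda(g)(m + IM) \;=\; q_{M'}\bigl(\epsilon_{M'}(f(m + IM))\bigr) \;=\; f(m + IM),
\end{equation*}
so $\Pi_k^\lambda(g) = f$, proving fullness.

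The main point — and the reason this is not merely a formality — is the existence of the natural section $\epsilon_M$ of the quotient $M \twoheadrightarrow M/IM$. Such a section cannot be produced for a generic surjection of algebras (e.g.\ $k[x]/(x^2) \to k$ with $M = k$); here it arises from the biadjointness of Lemma~\ref{lem:BR-biadj}, which itself reflects the Frobenius structure of the cyclotomic KLR algebras (Lemma~\ref{lem:frobKLR}) and the compatibility of $\pi_k$ with the associated trace.
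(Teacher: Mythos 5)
Your proof is correct (up to a small technical caveat discussed below) and takes a genuinely different route from the paper's.  The paper proves essential surjectivity by exhibiting, for each projective ${}_{\und{i}}P$ over $R_n^{\xi_{i_1\dotsm i_k}(\lambda)}$, the explicit preimage ${}_{e(p_{i_1\dotsm i_k},\und{i})}P$ under $\Pi^\lambda_{i_1\dotsm i_k}$; fullness is only \emph{asserted} to follow from the surjectivity of $\pi_k$ and the definition of $\Pi_k^\lambda$.  You instead prove the stronger statement $\Pi_k^\lambda\res_k^\lambda\cong\mathrm{id}$ (giving essential surjectivity on all modules at once, not just projectives), and for fullness you use the biadjunction of Lemma~\ref{lem:BR-biadj} to manufacture a lift of every morphism.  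You are right that this is the real content of the fullness claim: extension of scalars along a surjective ring map is \emph{not} full in general (your $k[x]/(x^2)\to k$ example is correct --- the induced map $\Hom_A(k,A)\to\Hom_k(k,k)$ is zero), so the paper's one-line justification genuinely requires the biadjointness/Frobenius input, and your write-up makes that dependence explicit.

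One caveat: you identify the unit $\eta_N\colon N\to\Pi_k^\lambda\res_k^\lambda N$ of $\res_k^\lambda\dashv\Pi_k^\lambda$ with ``the identity via the natural isomorphism above,'' meaning the tensor-product iso $n\mapsto n\otimes 1$.  But $\eta$ is obtained by transporting the unit of $\Res\dashv\CExt$ across the Frobenius identification of extension with coextension, and there is no a priori reason for these two natural isomorphisms from $\mathrm{id}$ to $\Pi_k^\lambda\res_k^\lambda$ to agree on the nose.  The fix is harmless: the triangle identity gives that $\Pi_k^\lambda(\epsilon_M)=q_M\circ\epsilon_M$ equals $\eta_{\Pi_k^\lambda M}^{-1}$, which is a natural automorphism of $\Pi_k^\lambda M$ but not necessarily the identity.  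Replacing $\epsilon_M$ by $\epsilon_M\circ\eta_{\Pi_k^\lambda M}$ yields a genuine natural section of $q_M$, after which the formula $g=\epsilon_{M'}\circ\eta_{\Pi_k^\lambda M'}\circ f\circ q_M$ satisfies $\Pi_k^\lambda(g)=f$, and fullness follows exactly as you argued.
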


\begin{proof}
Fullness of $\Pi_k^\lambda$ is a consequence of the surjectivity of $\pi_k$
and the definition of $\Pi_k^\lambda$. 
The same argument can be used to prove that each component 
$\Pi_{i_1\dotsm i_k}^\lambda$ is essentially surjective, 
since every projective ${}_{\und{i}}P$ in $R^{\xi_{i_1\dotsm i_k}(\lambda)}_n-\prmod$ 
can be obtained as 
$\Pi_{i_1\dotsm i_k}^\lambda({}_{e(p_{i_1\dotsm i_k},\und{i})}P)$ for ${}_{e(p_{i_1\dotsm i_k},\und{i})}P$ 
in $R_{n+1}^\lambda(k\alpha_n)-\prmod$.
\end{proof}

\medskip

\begin{lem}\label{lem:branch-k}
Each functor $\Pi^\lambda_{i_1\dotsm i_k}$  
intertwines the categorical $\sln$-action. 
\end{lem}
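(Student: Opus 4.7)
The plan is to produce natural isomorphisms
$\Pi^\lambda_{i_1\dotsm i_k}\circ F_i^\lambda \cong F_i^{\xi_{i_1\dotsm i_k}(\lambda)}\circ\Pi^\lambda_{i_1\dotsm i_k}$
and
$\Pi^\lambda_{i_1\dotsm i_k}\circ E_i^\lambda \cong E_i^{\xi_{i_1\dotsm i_k}(\lambda)}\circ\Pi^\lambda_{i_1\dotsm i_k}$
for each $i=1,\dotsc,n-1$, by reducing both statements to a compatibility between the surjection $\pi_{i_1\dotsm i_k}$ of Lemma~\ref{lem:cycinc} and the strand-addition map $\imath_i$ appearing in~\eqref{eq:Uaction}. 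First observe that for $i<n$ the map $\imath_i$ preserves the subalgebra of diagrams containing exactly $k$ strands labeled $n$, so $F_i^\lambda$ and $E_i^\lambda$ restrict to endofunctors on $\bigoplus_\nu R^\lambda_{n+1}(\nu+k\alpha_n)-\amod$ and the statement may be checked block by block.

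The central technical point is the commutative square
\begin{equation*}
\pi_{i_1\dotsm i_k}\circ\imath_i \;=\; \imath_i\circ\pi_{i_1\dotsm i_k}\colon R_{n+1}^\lambda(\nu+k\alpha_n)\longrightarrow R_n^{\xi_{i_1\dotsm i_k}(\lambda)}\bigl(\nu(i_1\dotsm i_k)+\alpha_i\bigr).
\end{equation*}
Inspecting the construction of $\pi_{i_1\dotsm i_k}$ given in the proof of Lemma~\ref{lem:cycinc}, the projection is realized through moves applied only to the leftmost $p_{i_1\dotsm i_k}$-block of each diagram, whereas $\imath_i$ attaches a fresh strand labeled $i<n$ at the extreme right; these two operations act on disjoint regions of any diagram in $R_{n+1}^\lambda(\nu+k\alpha_n)$ and therefore automatically commute. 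Writing $\Pi^\lambda_{i_1\dotsm i_k}$, $F_i^\lambda$ and $E_i^\lambda$ uniformly as tensor products with the appropriate bimodules over the cyclotomic KLR algebras involved, this commuting square supplies canonical bimodule isomorphisms which identify the two compositions on each side, yielding the natural isomorphisms for $F_i^\lambda$ and $E_i^\lambda$.

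The only nontrivial point, and the main obstacle, is verifying that the grading shift $\{\alpha_i^\vee(\lambda-\nu-k\alpha_n)-1\}$ prescribed by~\eqref{eq:Uaction} for $E_i^\lambda$ matches the corresponding shift $\{\alpha_i^\vee(\xi_{i_1\dotsm i_k}(\lambda)-\nu(i_1\dotsm i_k))-1\}$ on the $R_n^{\xi_{i_1\dotsm i_k}(\lambda)}$ side. Reducing by induction on $k$, this comes down to a direct combinatorial check from the definitions~\eqref{eq:projW} of $p_\Lambda$, \eqref{eq:nui} of $\nu(j)$, and of the $\xi'_j$, keeping track of how the $p_{i_1\dotsm i_k}$-block of strands used in $\pi_{i_1\dotsm i_k}$ accounts for exactly the difference between the $\sll$-weight $\lambda-\nu-k\alpha_n$ and the $\sln$-weight $\xi_{i_1\dotsm i_k}(\lambda)-\nu(i_1\dotsm i_k)$ when paired with any $\alpha_i^\vee$ for $i\leq n-1$.
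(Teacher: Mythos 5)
You are following essentially the same route as the paper. The commuting square $\pi_{i_1\dotsm i_k}\circ \phi_j = \phi_j\circ\pi_{i_1\dotsm i_k}$ (in your notation $\imath_j$ for the right strand-addition map) is exactly the observation the paper makes for $F_j^\lambda$. For $E_j^\lambda$ the paper argues slightly differently: rather than packaging both functors as tensoring with bimodules and pushing the commuting square through, it directly compares the diagrammatic bases of $\Pi_{i_1\dotsm i_k}E_j^\lambda(P)$ and $E_j^{\xi_{i_1\dotsm i_k}(\lambda)}\Pi_{i_1\dotsm i_k}(P)$ on projective generators $P$. Both formulations are acceptable, but note that since $E_j^\lambda$ is a (shifted) restriction rather than an induction, the isomorphism of bimodules on the $E$ side does not fall out of the commuting square for $\imath_j$ alone; one must pass through the adjunction, which is effectively the basis comparison the paper performs.

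On the degree shift: you are right that the paper is silent about compatibility of the shift $\{\alpha_j^\vee(\lambda-\nu-k\alpha_n)-1\}$ with $\{\alpha_j^\vee(\xi_{i_1\dotsm i_k}(\lambda)-\nu(i_1\dotsm i_k))-1\}$, and flagging it is a good instinct. However, you should not assert that the "direct combinatorial check" simply succeeds without actually carrying it out. Computing $\alpha_{n-1}^\vee$ on the two weights one finds a discrepancy of $\#\{r : i_r = n+1\}$, which vanishes only under the normalization $\lambda_{n+1}=0$ (when $i_r = n+1$ is excluded by $\lambda$-dominancy). For general partitions you must either invoke that normalization explicitly or explain where the extra shift is absorbed into $\Pi^\lambda_{i_1\dotsm i_k}$. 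As written, this step of your argument is still open.
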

\begin{proof}
It is clear that the projection map $\pi_{i_1\dotsm i_k}\colon R_{n+1}^\lambda(k\alpha_n)\to R_n^{\xi_{i_1\dotsm i_k}(\lambda)}$ 
commutes with the 
map $\phi_j\colon R_{n+1}^\lambda(k\alpha_n)\to R_{n+1}^\lambda(k\alpha_n)$ that adds a vertical strand 
labeled $j\in\{1,\dotsc ,n-1\}$ on the right of a diagram from $R_{n+1}^\lambda(k\alpha_n)$. 
This induces a natural isomorphism of functors 
$\Pi_{i_1\dotsm i_k}F_j^{\lambda}\xra{ \cong }F_j^{\xi_{i_1\dotsm i_k}(\lambda)}\Pi_{i_1\dotsm i_k}$. 
Now consider $E_j^\lambda$ and $E_j^{\xi_{i_1\dotsm i_k}(\lambda)}$. 
Recall that for a projective ${}_{\und{i}}P$ in $R_{n+1}^\lambda(k\alpha_n)$ with $\und{i}\in\seq(\beta)$ 
the projective $E_j^\lambda({}_{\und{i}}P)$ has a basis given by all diagrams starting in a sequence 
$\und{i}'j\in\seq(\beta)$ for fixed $j$ and ending up in the sequence $\und{i}$ and that 
$\Pi_{i_1\dotsm i_k}({}_{\und{i}}P)$ has an analogous description. 
The isomorphism between $\Pi_{i_1\dotsm i_k}E_j^\lambda$ and 
$E_j^{\xi_{i_1\dotsm i_k}(\lambda)}\Pi_{i_1\dotsm i_k}$ 
now follows from comparison between the vector spaces  $\Pi_{i_1\dotsm i_k}E_j^\lambda({}_{\und{i}}P)$ and 
$E_j^{\xi_{i_1\dotsm i_k}(\lambda)}\Pi_{i_1\dotsm i_k}({}_{\und{i}}P)$. 
\end{proof}

\begin{prop}\label{prop:branchrules}
\n Each functor $\Pi^\lambda_{i_1\dotsm i_k}$ descends to  
a surjection 
\begin{equation*}
K_0(\Pi^\lambda_{i_1\dotsm i_k})\colon K_0(R_{n+1}^\lambda(k\alpha_n))\to K_0(R_n^{\xi_{i_1\dotsm i_k}(\lambda)})
\end{equation*} 
of $\sln$-representations. 
\end{prop}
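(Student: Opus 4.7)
The plan is to extract the statement from the three preceding lemmas. First I would argue exactness of $\Pi^\lambda_{i_1\dotsm i_k}$, since this is what one needs for the functor to descend to the Grothendieck group in the first place. By Lemma~\ref{lem:BR-biadj} the functor $\Pi_k^\lambda$ is biadjoint to $\res_k^\lambda$, so it is both left and right exact, hence exact. Each component $\Pi^\lambda_{i_1\dotsm i_k}$ is the composition of $\Pi_k^\lambda$ with the projection onto the $(i_1\dotsm i_k)$-summand of the target direct sum of module categories, and projection onto a direct summand is an exact functor. Thus $\Pi^\lambda_{i_1\dotsm i_k}$ is exact and induces a well-defined homomorphism of abelian groups
\begin{equation*}
K_0(\Pi^\lambda_{i_1\dotsm i_k})\colon K_0(R_{n+1}^\lambda(k\alpha_n))\to K_0(R_n^{\xi_{i_1\dotsm i_k}(\lambda)}).
\end{equation*}

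Next I would deduce surjectivity. By Lemma~\ref{lem:BRfull} each component $\Pi^\lambda_{i_1\dotsm i_k}$ is essentially surjective; more precisely the proof of that lemma exhibits an explicit preimage of every projective ${}_{\und{i}}P$ in $R^{\xi_{i_1\dotsm i_k}(\lambda)}_n-\prmod$, namely the projective ${}_{e(p_{i_1\dotsm i_k},\und{i})}P$ in $R_{n+1}^\lambda(k\alpha_n)-\prmod$. Since the classes of projectives generate $K_0$, it follows at once that $K_0(\Pi^\lambda_{i_1\dotsm i_k})$ is surjective.

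It remains to check that the induced map is a homomorphism of $\sln$-representations. By Lemma~\ref{lem:branch-k} there are natural isomorphisms of functors
\begin{equation*}
\Pi^\lambda_{i_1\dotsm i_k}\circ F_j^\lambda \cong F_j^{\xi_{i_1\dotsm i_k}(\lambda)}\circ\Pi^\lambda_{i_1\dotsm i_k},
\mspace{20mu}
\Pi^\lambda_{i_1\dotsm i_k}\circ E_j^\lambda \cong E_j^{\xi_{i_1\dotsm i_k}(\lambda)}\circ\Pi^\lambda_{i_1\dotsm i_k},
\end{equation*}
for each $j\in\{1,\dotsc,n-1\}$. Passing to Grothendieck groups, these natural isomorphisms turn into equalities of operators, so $K_0(\Pi^\lambda_{i_1\dotsm i_k})$ commutes with the actions of the Chevalley generators $E_j,F_j$ of $U_q(\sln)$. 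As these generate $U_q(\sln)$, the induced map is a surjective morphism of $\sln$-representations, as claimed.

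I do not expect a serious obstacle: the three ingredients (biadjointness hence exactness, essential surjectivity hence $K_0$-surjectivity, and intertwining with the categorical action) have all been established above, and this proposition is essentially the cleanup step that records them at the decategorified level. The one subtlety worth stating carefully is that exactness of a single component $\Pi^\lambda_{i_1\dotsm i_k}$ has to be obtained from the biadjointness of the whole functor $\Pi_k^\lambda$ together with exactness of projection onto a summand, rather than from any self-standing adjoint of the component.
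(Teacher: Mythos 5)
The paper states this proposition without proof, treating it as an immediate corollary of Lemmas~\ref{lem:BR-biadj}--\ref{lem:branch-k}, and your proposal assembles the same three ingredients in the same way; so the approach matches. One small remark on precision, though: the paper's $K_0$ is the (scalar-extended) \emph{split} Grothendieck group of finitely generated graded \emph{projective} modules, not the Grothendieck group of the whole module category. For descent to that group what one needs is not exactness per se but that the functor be additive and carry projectives to projectives. Additivity is indeed automatic from the adjunctions in Lemma~\ref{lem:BR-biadj}, and the paper explicitly records right after defining the components that ``Functors $\Pi^\lambda_{i_1\dotsm i_k}$ take projectives to projectives''; citing that observation is the cleaner route. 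Your exactness argument is not wrong (and gives more than is needed), but it is pitched at the wrong Grothendieck group — if you phrased it as ``biadjoint, hence additive, and preserves projectives, hence descends to $K_0(-\!\prmod)$'' you would be exactly on target. The surjectivity step via the explicit preimage ${}_{e(p_{i_1\dotsm i_k},\und{i})}P$ and the intertwining step via Lemma~\ref{lem:branch-k} are both correct and as the paper intends.
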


\medskip 

Finally define the functor 
\begin{equation}\label{eq:brfunctor}
\Pi^{\lambda}:=\bigoplus_{k\geq 0}\Pi^\lambda_{k} 
\colon 
 R^\lambda_{n+1}-\amod
\to
\bigoplus\limits_{\xi_{i_1\dotsm i_k}\in\cD_\lambda}R^{\xi_{i_1\dotsm i_k}(\lambda)}_n-\amod .
\end{equation} 
Functor $\Pi^\lambda$ is full, essentially surjective 
and  intertwines 
the  $\sln$-action by Lemmas~\ref{lem:BR-biadj} to~\ref{lem:branch-k}.

Combining Proposition~\ref{prop:branchrules} with Theorem~\ref{thm:BK} we have the main result of this section, 
which follows easily by counting dimensions.
\begin{thm}\label{thm:branchrules}
Functor $\Pi^{\lambda}$ descends to 
an isomorphism of $\sln$-representations
\begin{align*} 
K_0(\Pi^{\lambda} )
\colon V^{\sll}_{\lambda} \cong K_0 ( R^\lambda_{n+1} )
\xra{\ \ \cong\ \ }
K_0\biggl(\ \bigoplus\limits_{\xi_{i_1\dotsm i_k}\in\cD_\lambda}R^{\xi_{i_1\dotsm i_k}(\lambda)}_n\biggr)
\cong\bigoplus\limits_{\mu\in\tau(\lambda)} V_{\mu}^{\sln} .
\end{align*}
\end{thm}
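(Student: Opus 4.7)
The plan is to combine the results already assembled in the section with a dimension-count. Essentially everything structural has been established; what remains is to upgrade the componentwise surjection of Proposition~\ref{prop:branchrules} to an isomorphism, and this is purely a matter of comparing $\bQ(q)$-dimensions on both sides.

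First I would assemble the map. By Proposition~\ref{prop:branchrules}, each component $\Pi^\lambda_{i_1\dotsm i_k}$ descends to a surjection of $\sln$-representations
\[
K_0(\Pi^\lambda_{i_1\dotsm i_k})\colon K_0\bigl(R_{n+1}^\lambda(k\alpha_n)\bigr)\twoheadrightarrow K_0\bigl(R_n^{\xi_{i_1\dotsm i_k}(\lambda)}\bigr).
\]
Summing over $k\geq 0$ and over all $\xi_{i_1\dotsm i_k}\in\cD_\lambda$, and using the algebra decomposition $R^\lambda_{n+1}\cong\bigoplus_{k\geq 0}R^\lambda_{n+1}(k\alpha_n)$, gives a surjective $\sln$-equivariant map
\[
K_0(\Pi^\lambda)\colon K_0\bigl(R^\lambda_{n+1}\bigr)\twoheadrightarrow \bigoplus_{\xi_{i_1\dotsm i_k}\in\cD_\lambda}K_0\bigl(R^{\xi_{i_1\dotsm i_k}(\lambda)}_n\bigr).
\]
$\sln$-equivariance comes from Lemma~\ref{lem:branch-k}, and surjectivity from the surjectivity of each component together with the block decomposition.

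Next I would identify both sides representation-theoretically. Applying Theorem~\ref{thm:BK} over $\sll$ gives $K_0(R_{n+1}^\lambda)\cong V_\lambda^{\sll}$, and applying Theorem~\ref{thm:BK} over $\sln$ to each summand yields $K_0\bigl(R^{\xi_{i_1\dotsm i_k}(\lambda)}_n\bigr)\cong V^{\sln}_{\xi_{i_1\dotsm i_k}(\lambda)}$. By the combinatorial identification of $\cD_\lambda$ with $\tau(\lambda)$ made explicit at the end of Subsection~\ref{ssec:brules}, the target is exactly $\bigoplus_{\mu\in\tau(\lambda)}V^{\sln}_\mu$.

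Finally, I would conclude by dimension count. The classical branching rule~\eqref{eq:brulsalg} is an isomorphism of $\sln$-modules
\[
V^{\sll}_\lambda\;\cong\;\bigoplus_{\mu\in\tau(\lambda)}V^{\sln}_\mu,
\]
so source and target have equal $\bQ(q)$-dimension. A surjective $\bQ(q)$-linear map between finite-dimensional spaces of the same dimension is bijective, so $K_0(\Pi^\lambda)$ is an isomorphism. Since it is also $\sln$-equivariant by the previous step, the composition with~\eqref{eq:brulsalg} is the claimed isomorphism of $\sln$-representations.

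I do not expect a genuine obstacle here: the hard work is upstream (the factoring-idempotents results of Subsection~\ref{ssec:factidemp}, the construction of the surjections $\pi_{i_1\dotsm i_k}$ in Lemma~\ref{lem:cycinc}, and the categorification Theorem~\ref{thm:BK}). The only point that deserves care is making sure that the indexing of summands by $\cD_\lambda$ matches $\tau(\lambda)$ bijectively with the correct multiplicities, which is guaranteed because the branching rule~\eqref{eq:brulsalg} is multiplicity-free and $\cD_\lambda$ parameterizes the ordered box-removals with no two boxes from the same column.
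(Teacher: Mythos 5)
Your proposal matches the paper's own argument, which is stated in one sentence just before the theorem: combine Proposition~\ref{prop:branchrules} with Theorem~\ref{thm:BK} and count dimensions; you have simply made the dimension-count explicit. One small imprecision worth flagging: componentwise surjectivity plus the $k$-block decomposition does not by itself force surjectivity of $K_0(\Pi^\lambda)$ (projections of a map onto each factor of a direct sum can all be onto without the map being onto) — what actually closes this is either the essential surjectivity of $\Pi_k^\lambda$ as a whole (Lemma~\ref{lem:BRfull}) or, equivalently, the $\sln$-equivariance you establish together with the multiplicity-freeness of the target $\bigoplus_{\mu\in\tau(\lambda)}V^{\sln}_\mu$, which you do invoke at the end; so the argument does close, but the surjectivity clause should rest on one of these two facts rather than on the block decomposition alone.
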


\begin{cor}\label{cor:BRinj}
The functor $\Pi^{\lambda}$ is injective on objects. 
\end{cor}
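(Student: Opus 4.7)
The plan is to combine the Krull--Schmidt property of both module categories with the fullness of $\Pi^{\lambda}$. Decompose $M=\bigoplus M_i$ and $N=\bigoplus N_j$ into indecomposable summands; since $\Pi^{\lambda}$ is additive and the target is Krull--Schmidt, matching summands reduces the problem to two sub-claims: (a) $\Pi^{\lambda}$ sends nonzero indecomposables to nonzero indecomposables, and (b) $\Pi^{\lambda}$ is injective on isomorphism classes of indecomposables. Together with Krull--Schmidt in the target these imply $M\cong N$.

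For (a), fullness of $\Pi^{\lambda}$ makes $\operatorname{End}(\Pi^{\lambda}M)$ a quotient of the local ring $\operatorname{End}(M)$, hence local or zero, so $\Pi^{\lambda}M$ is indecomposable or zero. Non-vanishing is the delicate point. Since each $\Pi^{\lambda}_k$ is extension of scalars along the surjection $\pi_k$ of Lemma~\ref{lem:cycinc}, a simple source module $S$ satisfies $\Pi^{\lambda}S\in\{0,S\}$, equal to $S$ exactly when $\ker\pi_k$ annihilates $S$ (i.e.\ $S$ factors through $\pi_k$). Thus $\Pi^{\lambda}$ induces a bijection between iso classes of simples of the source that factor through $\pi_k$ and all iso classes of simples of the target. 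By Theorem~\ref{thm:branchrules} the $K_0$-groups of source and target have equal rank, which for finite-dimensional algebras equals the number of iso classes of simple modules; hence every simple of the source factors through $\pi_k$, and no simple is killed. Exactness of $\Pi^{\lambda}$ (biadjoint to the exact restriction functor of Lemma~\ref{lem:BR-biadj}) then propagates non-vanishing from simples to arbitrary nonzero modules via composition series.

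For (b), given an isomorphism $\phi\colon\Pi^{\lambda}M\xra{\cong}\Pi^{\lambda}N$ with $M,N$ indecomposable, fullness lifts $\phi$ and $\phi^{-1}$ to $f\colon M\to N$ and $g\colon N\to M$ satisfying $\Pi^{\lambda}(gf)=\operatorname{id}_{\Pi^{\lambda}M}$. If $gf$ failed to be invertible it would lie in the maximal ideal of the local finite-dimensional ring $\operatorname{End}(M)$ and hence be nilpotent, with $(gf)^r=0$ for some $r\geq 1$ (Fitting's lemma). Applying $\Pi^{\lambda}$ would then give $\operatorname{id}_{\Pi^{\lambda}M}=0$, contradicting $\Pi^{\lambda}M\neq 0$. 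Hence $gf$ is an isomorphism, symmetrically so is $fg$, and $f\colon M\xra{\cong}N$ is the desired isomorphism.

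The main obstacle is the non-vanishing statement of (a); this is precisely where Theorem~\ref{thm:branchrules} enters essentially (beyond mere fullness), the $K_0$-count ruling out any simple of the source being annihilated, given that $\Pi^{\lambda}$ is extension of scalars along a surjection of finite-dimensional algebras. The remaining steps are the standard Krull--Schmidt-plus-fullness formalism for reconstructing objects from their images under a local-endomorphism-preserving functor.
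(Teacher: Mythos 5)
Your proof is correct, and it takes a genuinely different route from the paper's. The paper first uses the factoring-idempotents results of Subsection~\ref{ssec:factidemp} to reduce to injectivity on direct sums of the primed projectives ${}_{e'(p_{i_1\dotsm i_k})}P$, then inducts on the reverse lexicographic order of the $p_{i_1\dotsm i_k}$: if $\Pi^\lambda$ killed such a projective, the inductive hypothesis together with the factorization arguments would force its endomorphism algebra to be one-dimensional, hence the projective would be indecomposable with nonzero $K_0$-class, contradicting the fact that $K_0(\Pi^\lambda)$ is an isomorphism (Theorem~\ref{thm:branchrules}). You instead run a standard Krull--Schmidt argument: fullness (Lemma~\ref{lem:BRfull}) makes endomorphism rings of indecomposables into quotient local rings, biadjointness (Lemma~\ref{lem:BR-biadj}) gives exactness of $\Pi^\lambda$, and a $K_0$-count on simples --- again via Theorem~\ref{thm:branchrules} --- shows no simple is annihilated by the extension of scalars along $\pi_k$, from which composition series and Fitting's lemma do the rest. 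Both arguments bottom out on Theorem~\ref{thm:branchrules}, but yours substitutes module-categorical formalism for the paper's diagrammatic induction, avoiding the combinatorics of Subsection~\ref{ssec:factidemp}; the paper's approach in exchange pinpoints the special projectives as the exact set of surviving indecomposables, which the text uses afterwards. One small imprecision to flag: $R_{n+1}^\lambda$ is not itself finite-dimensional, only each weight block $R_{n+1}^\lambda(\beta)$ is, so the ``rank of $K_0$ equals number of simples'' statement should be read per weight block, or as an equality of $\bQ(q)$-dimensions counting graded simples up to shift; this does not affect the conclusion.
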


\begin{proof}
From the results of Subsection~\ref{ssec:factidemp} translated into the cyclotomic setting 
we see that 
it is enough to prove that  
 $\Pi^{\lambda}$ is injective on the collection of objects 
which can be expressed as direct sums of projectives over $R_{n+1}^\lambda$ of the form
${}_{e'(p_{i_1\dotsm i_k})}P$.
We proceed by induction on the reverse order of the lexicographic order on the $p_{i_1\dotsm i_k}$s 
(this is induced by the lexicographic order on the $k$-tuples  $(i_1, \dotsc , i_k)\in\bZ_{> 0}^k$). 
The base case $(i_1, \dotsc , i_k)=(n,\dotsc , n)$ being trivial we proceed to the general case.  
The fact that $\Pi^{\lambda}{}_{e'(p_{i_1\dotsm i_k})}P =0$ implies that 
$$\Hom_{R_{n+1}^\lambda-\amod}({}_{e'(p_{i_1\dotsm i_k})}P, {}_{e(p_{j_1\dotsm j_k},\und{j})}P)=0$$
for all $p_{j_1\dotsm j_k}$ greater that $p_{i_1\dotsm i_k}$ in the lexicographic order. 
By the induction hypothesis this implies that
\begin{equation}\label{eq:homzero}
\Hom_{R_{n+1}^\lambda-\amod}({}_{e'(p_{i_1\dotsm i_k})}P, {}_{e'(p_{m_1\dotsm m_k},\und{m})}P)=0
\end{equation}
for all $p_{m_1\dotsm m_k}$ greater that $p_{i_1\dotsm i_k}$ in the lexicographic order,  
since all but the special projectives in $\{ {}_{e'(p_{m_1\dotsm m_k},\und{m})}P \}_{p_{m_1\dotsm m_k} > p_{i_1\dotsm i_k} }$ 
are zero. 
Since every object in $R_{n+1}^\lambda$ is isomorphic to a direct summand in 
$\bigoplus_{i_1\dotsm i_k,\und{i}}P^{\lambda}\{ s_{i_1\dotsm i_k,\und{i}}\}^{m_{i_1\dotsm i_k,\und{i}} }$ 
this implies that
$$\End_{R_{n+1}^\lambda-\amod}({}_{e'(p_{i_1\dotsm i_k})}P) = {e'(p_{i_1\dotsm i_k})}R_{n+1}^\lambda{e'(p_{i_1\dotsm i_k})}$$
is one-dimensional, for any diagram other than 
the one consisting only of vertical strands without dots factors through 
elements in a sum of spaces of the form~\eqref{eq:homzero}, as explained in Subsection~\ref{ssec:factidemp}.  
In particular this implies that ${}_{e'(p_{i_1\dotsm i_k})}P$ 
is indecomposable, which contradicts Theorem~\ref{thm:branchrules} for otherwise 
$K_0(\Pi^\lambda)({}_{e'(p_{i_1\dotsm i_k})}P)$ would be nonzero since
$K_0({}_{e'(p_{i_1\dotsm i_k})}P)$ is nonzero and $K_0(\Pi^\lambda)$ is an isomorphism.
\end{proof}

Lemma~\ref{lem:BRfull} and Corollary~\ref{cor:BRinj} altogether imply that 
the category $\bigoplus\limits_{\xi_{i_1\dotsm i_k}\in\cD_\lambda}R^{\xi_{i_1\dotsm i_k}(\lambda)}_n-\prmod$
contains all the objects of $R^\lambda_{n+1}-\prmod$.

%%%%%%%%%%%%%%%%%%%%%%%%%%%%%%%%%%%%%%%%%%%%%%%%%%
\subsection{Categorical projection}\label{ssec:catproj}% and inclusion}%
%%%%%%%%%%%%%%%%%%%%%%%%%%%%%%%%%%%%%%%%%%%%%%%%%%

Although $\Pi_k^\lambda$ has a nice behavior on projectives this is not the case 
for $\res_k^\lambda$. Functor $\res_k^\lambda$ takes a projective ${}_{\und{j}}P$ 
in $R^{\xi_{i_1\dotsm i_k}(\lambda)}_n-\prmod$ to the 
$R_{n+1}^\lambda(k\alpha_n)$-module
${}_{i_1\dotsm i_k,\und{j}}L$, 
which is a quotient of the projective ${}_{i_1\dotsm i_k,\und{j}}P$ over $R_{n+1}^\lambda(k\alpha_n)$. 
Module ${}_{i_1\dotsm i_k,\und{j}}L$ has a presentation by the span of the subset of the set of all diagrams 
from ${}_{i_1\dotsm i_k,\und{j}}P$ whose strands can be regarded as belonging to two groups, 
one consisting of the usual KLR-strands from ${}_{\und{j}}P$ satisfying the KLR relations, 
and other consisting of $\sum_{s=1}^k(n-i_s +1)$ strands, 
with labels that at any height 
are ordered from left two right according to $p_{i_1\dotsm i_k}$.  
Strands from the second group cannot intersect among themselves nor 
carry dots but they can intersect strands from the first group. 
We can also regard a diagram in ${}_{i_1\dotsm i_k,\und{j}}L$ as the overlap of a 
diagram from ${}_{\und{j}}P$ and $\sum_{s=1}^k(n-i_s +1)$ strands that run 
parallel to each other, do not carry dots, have labels determined by  
$p_{i_1\dotsm i_k}$, and end up at the left of all strands from ${}_{\und{j}}P$, as for example in 
\begin{equation*}
\labellist
\small
\pinlabel ${}_{\und{j}}P$ at 220 116  
\pinlabel $\lambda$ at -40 140
\tiny \hair 2pt
\pinlabel $\dotsc$ at  35 190 
\pinlabel $\dotsc$ at  94 190 
\pinlabel $\dotsc$ at 220 190 
\pinlabel $\dotsc$ at  80  40
\pinlabel $\dotsc$ at 225  40
\pinlabel $i_1$ at   4 220
\pinlabel $i_r$ at  65 219
\pinlabel $n$   at 119 219
\pinlabel $j_1$ at 164 220
\pinlabel $j_m$ at 278 220
\endlabellist 
\figins{-20.5}{1.1}{iLmod}
\end{equation*}

We have that $\Pi^\lambda_{\ell_1,\dotsc ,\ell_r}({}_{i_1\dotsm i_k,\und{j}}L)$  
is isomorphic to ${}_\und{j}P$ if $(\ell_1,\dotsc ,\ell_r)=(i_1\dotsm i_k)$ and 
is zero otherwise. 
We also have that  
the functor $\bigoplus_{\xi_{i_1,\dotsc ,i_k}\in\cD_{\lambda}}\Pi^\lambda_{i_1,\dotsc ,i_k}\res^\lambda_{i_1,\dotsc ,i_k} $ 
is isomorphic to the identity functor of $\bigoplus_{\xi_{i_1\dotsm i_k}\in\cD_\lambda}R^{\xi_{i_1\dotsm i_k}(\lambda)}_n-\amod$. 
We can think of the set of the composite functors 
$\{\Pi^\lambda_{i_1,\dotsc ,i_k}\res^\lambda_{i_1,\dotsc ,i_k}\}_{\xi_{i_1,\dotsc ,i_k}\in\cD_{\lambda}}$ 
as a categorical version of a complete system of orthogonal idempotents. 
This is another way of seeing that $R^\lambda_{n+1}-\amod$ contains  
$\bigoplus_{\xi_{i_1\dotsm i_k}\in\cD_\lambda}R^{\xi_{i_1\dotsm i_k}(\lambda)}_n-\amod$.

\medskip

The collection of diagrams described above can be given the structure of associative $\Bbbk$-algebra if 
we do not force the labels on the top to be ordered according to $e(p_{i_1\dotsm i_k},\und{j})$  
(of course, the labels of the strands from the second group have to end up in the order determined by $p_{i_1\dotsm i_k}$) 
and impose the relations inherited from the KLR relations~\eqref{eq:R2}-\eqref{eq:dotslide}.  
Denote this algebra $\widetilde{R}^\lambda_{n+1}(k\alpha_n;p_{i_1\dotsm i_k},\und{j})$ 
and define 
\begin{defn} 
$\widetilde{R}^\lambda_{n+1}(k\alpha_n;p_{i_1\dotsm i_k}) =  \bigoplus_{\und{j}}\widetilde{R}^\lambda_{n+1}(k\alpha_n;p_{i_1\dotsm i_k},\und{j})$. 
\end{defn}
Each element of $\widetilde{R}^\lambda_{n+1}(k\alpha_n;p_{i_1\dotsm i_k})$ can be thought of as an overlap of two diagrams, one from 
$R^\lambda_{n+1}(\nu(i_1\dotsm i_k))$ for some $\nu\in\Lambda_+^{n+1}$ 
and another one from the 1-dimensional algebra consisting of the single 
element given by $\sum_{s=1}^k(n-i_s+1)$ vertical strands without dots and labeled 
in the order determined by $p_{i_1\dotsm i_k}$  
(see Subsection~\ref{ssec:factidemp} for the definition of $\nu(i_1\dotsm 1_k)$). 
\begin{defn}
We define the algebras $\widetilde{R}^\lambda_{n+1}(k\alpha_n)$ and $\widetilde{R}^\lambda_{n+1}$ by 
\begin{align*}
\widetilde{R}^\lambda_{n+1}(k\alpha_n) &= \bigoplus_{\xi_{i_1\dotsm i_k}\in\cD_\lambda^k} \widetilde{R}^\lambda_{n+1}(k\alpha_n;p_{i_1\dotsm i_k})  
\intertext{and}
\widetilde{R}^\lambda_{n+1}      &= \bigoplus_{k\geq 0 } \widetilde{R}^\lambda_{n+1}(k\alpha_n)  .   
\end{align*} 
\end{defn}

We now describe $\widetilde{R}^\lambda_{n+1}(k\alpha_n)$ more intrinsically. 
The kernel $K$ of the action of $R_{n+1}^{\lambda}(\nu)$ on ${}_{i_1\dotsm i_k,\und{j}}L$ contains all diagrams 
in $\widetilde{R}^\lambda_{n+1}(k\alpha_n)$ that have either a crossing between strands belonging to the second group 
or a dot on one of its strands. It is not hard to see that this collection of diagrams exhaust $K$. 
Let $J_{i_1\dotsm i_k,\und{j}}\subset R_{n+1}^{\lambda}(k\alpha_n)$  be the two-sided ideal generated by $K$  
and define the ideal $J_{i_1\dotsm i_k}=\sum_{\und{j}}J_{i_1\dotsm i_k,\und{j}}$ 
where $\und{j}$ runs over all sequences of simple roots in $\Lambda_+^n$.   
%\begin{equation*}
%J_{i_1\dotsm i_k}=\sum_{\und{j}}J_{i_1\dotsm i_k,\und{j}}, 
%\mspace{40mu}
%J_{k}=\sum_{i_1\dotsm i_k \in \cD_k^{\lambda }}J_{i_1\dotsm i_k}. 
%\end{equation*}
%
All the above adds up to the following. 
\begin{lem}
We have an isomorphism of algebras 
$\widetilde{R}^\lambda_{n+1}(k\alpha_n;p_{i_1\dotsm i_k})\cong R^\lambda_{n+1}(k\alpha_n)/ J_{i_1\dotsm i_k}$.
\end{lem}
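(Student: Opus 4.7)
The plan is to exhibit an explicit algebra map
$\phi\colon \widetilde{R}^\lambda_{n+1}(k\alpha_n;p_{i_1\dotsm i_k}) \to R^\lambda_{n+1}(k\alpha_n)/J_{i_1\dotsm i_k}$
sending an overlay diagram to the honest KLR diagram obtained by drawing the $p_{i_1\dotsm i_k}$ block on the left and composing with the right-group diagram, then reducing modulo $J_{i_1\dotsm i_k}$. Well-definedness is immediate: the defining relations of $\widetilde{R}^\lambda_{n+1}(k\alpha_n;p_{i_1\dotsm i_k})$ are inherited from the KLR relations \eqref{eq:R2}--\eqref{eq:dotslide} and the cyclotomic condition, all of which hold in $R^\lambda_{n+1}(k\alpha_n)/J_{i_1\dotsm i_k}$.

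For surjectivity I would invoke Proposition~\ref{prop:genfactoriz}: every $1_{\und{j}}\in R^\lambda_{n+1}(k\alpha_n)$ factors through a family of idempotents of the form $e'(p_{j_1\dotsm j_k},\und{m})$. After passing to the quotient by $J_{i_1\dotsm i_k}$, the summands with $(j_1,\dotsc,j_k)\neq (i_1,\dotsc,i_k)$ are killed: rewriting such a diagram so that its leftmost block has shape $p_{i_1\dotsm i_k}$ forces either a crossing between strands of the $p$-block or a dot on one of them, and both generators lie in $J_{i_1\dotsm i_k}$ by definition of $K$. Only the components factoring through $p_{i_1\dotsm i_k}$ survive, and each of these is visibly in the image of $\phi$.

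For injectivity the strategy is to use the faithful representation $M=\bigoplus_{\und{j}}{}_{i_1\dotsm i_k,\und{j}}L$ of $R^\lambda_{n+1}(k\alpha_n)$ that was constructed just before the statement. By the very definition of $J_{i_1\dotsm i_k}$ as the sum of the annihilators $J_{i_1\dotsm i_k,\und{j}}$, the quotient $R^\lambda_{n+1}(k\alpha_n)/J_{i_1\dotsm i_k}$ acts faithfully on $M$. On the other hand, the overlay description of $M$ also equips it with a faithful action of $\widetilde{R}^\lambda_{n+1}(k\alpha_n;p_{i_1\dotsm i_k})$: the distinguished generator of ${}_{i_1\dotsm i_k,\und{j}}L$ is sent by distinct overlay diagrams to distinct basis vectors of $M$. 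These two actions are intertwined by $\phi$, so faithfulness of the $\widetilde{R}$-action forces $\phi$ to be injective.

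The step I expect to be the main obstacle is the faithfulness of the $\widetilde{R}$-action on $M$: one has to pin down a basis of ${}_{i_1\dotsm i_k,\und{j}}L$ compatible with the overlay presentation and check that different overlay diagrams cannot produce the same element of $M$. Once this bookkeeping is in place, the proof reduces to the clean two-line comparison of faithful actions sketched above.
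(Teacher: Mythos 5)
The paper gives no proof of this lemma --- it simply writes ``All the above adds up to the following'' after defining $K$ and $J_{i_1\dotsm i_k}$ --- so there is nothing in the text to compare against line by line. Your overall strategy (construct $\phi$, check well-definedness, then surjectivity, then injectivity) is the natural way to supply the missing argument, and the well-definedness and injectivity portions are broadly in the spirit of the paper's discussion. You also correctly isolate the main bookkeeping obstruction, namely faithfulness of the $\widetilde{R}^\lambda_{n+1}(k\alpha_n;p_{i_1\dotsm i_k})$-action on $\bigoplus_{\und{j}}{}_{i_1\dotsm i_k,\und{j}}L$; one additional point you should make explicit there is that the different $\und{j}$'s live in different blocks of $R^\lambda_{n+1}(k\alpha_n)$, so that $J_{i_1\dotsm i_k}=\sum_{\und{j}}J_{i_1\dotsm i_k,\und{j}}$ really does annihilate the whole of $M$ (otherwise the quotient need not act on $M$ at all, and the faithful-action comparison would not get off the ground).

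The surjectivity step, however, is the genuine gap. You invoke Proposition~\ref{prop:genfactoriz}, which expresses $1_{\und{j}}$ as a combination of elements factoring through special idempotents $e'(p_{j_1\dotsm j_k},\und{m})$, and then assert that the summands with $(j_1,\dotsc ,j_k)\neq(i_1,\dotsc ,i_k)$ die modulo $J_{i_1\dotsm i_k}$ because ``rewriting such a diagram so that its leftmost block has shape $p_{i_1\dotsm i_k}$ forces a crossing or a dot on the $p$-block.'' This is not supported: Proposition~\ref{prop:genfactoriz} is a statement about $R^\lambda_{n+1}(k\alpha_n)$ itself and says nothing about the quotient, a diagram whose leftmost block is $p_{j_1\dotsm j_k}$ with $(j_1,\dotsc ,j_k)\neq(i_1,\dotsc ,i_k)$ may not even admit a rewriting with leftmost block $p_{i_1\dotsm i_k}$, and there is no reason given why such a component should lie in $J_{i_1\dotsm i_k}$ (whose generators $K$ are defined in terms of the $p_{i_1\dotsm i_k}$-overlay presentation, not in terms of other $p$'s). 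Moreover, ``factoring through $e'(p_{i_1\dotsm i_k},\und{m})$'' is not the same as ``being an overlay diagram,'' so even the surviving summands are not ``visibly in the image of $\phi$'' without further work. The argument the paper intends is much more direct: $J_{i_1\dotsm i_k}$ is constructed precisely to kill every crossing between, and every dot on, the distinguished group of $\sum_s(n-i_s+1)$ strands, so that every residue class in the quotient is represented by an overlay diagram and surjectivity of $\phi$ is essentially built into the definition of $J_{i_1\dotsm i_k}$; no appeal to Proposition~\ref{prop:genfactoriz} is needed here.
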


\n Moreover, we also have
\begin{lem}
Module ${}_{i_1\dotsm i_k,\und{j}}L$ is projective as a module over $\widetilde{R}^\lambda_{n+1}(k\alpha_n)$.
\end{lem}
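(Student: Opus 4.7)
The plan is to identify ${}_{i_1\dotsm i_k,\und{j}}L$ with the right module $e(p_{i_1\dotsm i_k},\und{j})\,\widetilde{R}^\lambda_{n+1}(k\alpha_n)$ and then appeal to the standard fact that a cyclic right module generated by an idempotent is a direct summand of the regular module, hence projective.

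First I would note that $e(p_{i_1\dotsm i_k},\und{j})$, viewed as a diagram with $\sum_{s=1}^k(n-i_s+1)+|\und{j}|$ parallel vertical strands (with labels ordered according to $p_{i_1\dotsm i_k}$ on the left and $\und{j}$ on the right), carries no dots and no crossings, so its image in $\widetilde{R}^\lambda_{n+1}(k\alpha_n;p_{i_1\dotsm i_k})\cong R^\lambda_{n+1}(k\alpha_n)/J_{i_1\dotsm i_k}$ is a nonzero idempotent. Inside $\widetilde{R}^\lambda_{n+1}(k\alpha_n)$, which is a direct sum of the blocks $\widetilde{R}^\lambda_{n+1}(k\alpha_n;p_{i_1\dotsm i_k})$ indexed by $\xi_{i_1\dotsm i_k}\in\cD_\lambda^k$, this gives an idempotent whose action is concentrated on the $(p_{i_1\dotsm i_k})$-block.

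Next I would compare the two descriptions. By the previous lemma, a basis of $e(p_{i_1\dotsm i_k},\und{j})\,\widetilde{R}^\lambda_{n+1}(k\alpha_n;p_{i_1\dotsm i_k})$ is given by diagrams whose bottom sequence is exactly $(i_1,i_1+1,\dotsc,n,\dotsc,i_k,\dotsc,n,\und{j})$, in which the $\sum_s(n-i_s+1)$ strands originating from the $p_{i_1\dotsm i_k}$-group carry no dots and do not cross each other (the relations defining $J_{i_1\dotsm i_k}$ kill all such diagrams), but may cross the strands coming from $\und{j}$ freely. This matches verbatim the spanning set described for ${}_{i_1\dotsm i_k,\und{j}}L$ as a quotient of ${}_{i_1\dotsm i_k,\und{j}}P$, and the right $R^\lambda_{n+1}(k\alpha_n)$-action on ${}_{i_1\dotsm i_k,\und{j}}L$ factors through the quotient $\widetilde{R}^\lambda_{n+1}(k\alpha_n;p_{i_1\dotsm i_k})$ by construction. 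Hence the two modules are isomorphic.

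Once this identification is in place, projectivity is immediate: the decomposition
\begin{equation*}
\widetilde{R}^\lambda_{n+1}(k\alpha_n) \;=\; e(p_{i_1\dotsm i_k},\und{j})\,\widetilde{R}^\lambda_{n+1}(k\alpha_n)\;\oplus\;\bigl(1-e(p_{i_1\dotsm i_k},\und{j})\bigr)\widetilde{R}^\lambda_{n+1}(k\alpha_n)
\end{equation*}
exhibits ${}_{i_1\dotsm i_k,\und{j}}L$ as a direct summand of the regular right module. The one point requiring care, and which I expect to be the main (mild) obstacle, is to verify cleanly that no extra relations beyond those in $J_{i_1\dotsm i_k}$ are present in ${}_{i_1\dotsm i_k,\und{j}}L$, i.e.\ that the quotient defining $L$ and the quotient defining $\widetilde{R}^\lambda_{n+1}(k\alpha_n;p_{i_1\dotsm i_k})$ impose exactly the same relations on diagrams with the prescribed bottom sequence. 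This follows from the explicit description of the kernel $K$ given just before the previous lemma and the fact that $J_{i_1\dotsm i_k}$ was defined precisely as the two-sided ideal generated by $K$.
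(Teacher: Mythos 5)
Your proposal is correct and takes essentially the same approach as the paper: the paper's proof simply observes that $e(p_{i_1\dotsm i_k},\und{j})$ is an idempotent in $\widetilde{R}^\lambda_{n+1}(k\alpha_n)$ and that ${}_{i_1\dotsm i_k,\und{j}}L = e(p_{i_1\dotsm i_k},\und{j})\widetilde{R}^\lambda_{n+1}(k\alpha_n)$. Your write-up spells out the identification and the resulting direct-summand decomposition in more detail, but the argument is the same.
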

\begin{proof}
We have that the element 
$e(p_{i_1\dotsm i_k},\und{j})$ is an idempotent in $\widetilde{R}^\lambda_{n+1}(k\alpha_n)$  
and the module 
${}_{i_1\dotsm i_k,\und{j}}L = e(p_{i_1\dotsm i_k},\und{j})\widetilde{R}^\lambda_{n+1}(k\alpha_n)$.  
\end{proof}

\medskip 

Functor $\res_k^\lambda$ can be regarded as a functor from 
$\bigoplus_{\xi_{i_1\dotsm i_k}\in\,\cD_k^\lambda}R_n^{\xi_{i_1\dotsm i_k}(\lambda)}-\amod$
to %the quotient category 
$\widetilde{R}^\lambda_{n+1}(k\alpha_n)-\amod$, 
the latter category seen as the quotient of $R^\lambda_{n+1}(k\alpha_n)-\amod$ by $\sum_{\xi_{i_1\dotsm i_k}\in\cD^\lambda_k}J_{i_1\dotsm i_k}$.   
This functor takes projectives to projectives. 
With this in mind we see that the quotient functor
\begin{equation}\label{eq:quotQ}
\cQ_k\colon R^\lambda_{n+1}(k\alpha_n)-\amod \to \widetilde{R}^\lambda_{n+1}(k\alpha_n)-\amod
\end{equation} 
is isomorphic to the functor $\res_k^\lambda\Pi_k^\lambda$. 
Moreover, the functor $\Pi^\lambda$ descends to a functor 
$$\widetilde{\Pi}^\lambda \colon 
\widetilde{R}^\lambda_{n+1}-\amod \to \bigoplus\limits_{\xi_{i_1\dotsm i_k}\in\,\cD^\lambda}R_n^{\xi_{i_1\dotsm i_k}(\lambda)}-\amod$$ 
which is full and essentially surjective by Lemma~\ref{lem:BRfull}. 
It is also faithful by the definition of the projections $\pi_k$ . 
The categories $\widetilde{R}^\lambda_{n+1}-\amod$ and 
$\bigoplus_{\xi_{i_1\dotsm i_k}\in\,\cD^\lambda}R_n^{\xi_{i_1\dotsm i_k}(\lambda)}-\amod$ 
are therefore equivalent.  
Denote by 
\begin{align*}
\widetilde{\Pi}_k^\lambda &\colon 
\widetilde{R}^\lambda_{n+1}(k\alpha_n)-\amod \to \bigoplus\limits_{\xi_{i_1\dotsm i_k}\in\,\cD_k^\lambda}R_n^{\xi_{i_1\dotsm i_k}(\lambda)}-\amod
\intertext{and}
\widetilde{\res}_k^\lambda &\colon
 \bigoplus\limits_{\xi_{i_1\dotsm i_k}\in\,\cD_k^\lambda}R_n^{\xi_{i_1\dotsm i_k}(\lambda)}-\amod \to
\widetilde{R}^\lambda_{n+1}(k\alpha_n)-\amod 
\end{align*}
The functors induced by $\Pi_k^\lambda$ and $\res_k^\lambda$ from Subsection~\ref{ssec:catbranching}.  
It is not hard to see that the composite functors  
$\widetilde{\res}_k^\lambda\widetilde{\Pi}_k^\lambda$ and $\widetilde{\Pi}_k^\lambda\widetilde{\res}_k^\lambda$ 
are isomorphic to the identity functors on  $\widetilde{R}^\lambda_{n+1}-\amod$
and 
$\bigoplus_{\xi_{i_1\dotsm i_k}\in\,\cD^\lambda}R_n^{\xi_{i_1\dotsm i_k}(\lambda)}-\amod$.

%%%%%%%%%%%%%%%%%%%%%%%%%%%%%%%%%%%%%%%%%%%%
%%%                                      %%%
%%%       Gelfand-Tsetlin basis          %%%
%%%                                      %%%
%%%%%%%%%%%%%%%%%%%%%%%%%%%%%%%%%%%%%%%%%%%%%
\section{Categorifying the Gelfand-Tsetlin basis}\label{sec:GelfTse}    

%%%%%%%%%%%%%%%%%%%%%%%%%%%%%%%%%%%%%%%%%%%%%%%%%%%%%%%%%%%%%%%%%%%%%%%%
\subsection{Recovering the categorical $\sll$-action}\label{sec:recsll}%
%%%%%%%%%%%%%%%%%%%%%%%%%%%%%%%%%%%%%%%%%%%%%%%%%%%%%%%%%%%%%%%%%%%%%%%%

A key step in constructing a categorical $\sll$-action on the categorified 
Gelfand-Tsetlin basis consists in recovering the categorical 
$\sll$-action on $\Pi^\lambda(R_{n+1}^\lambda-\amod)$
from its categorical action on $R_{n+1}^\lambda-\amod$.
This amounts to understand the interplay between the functors
$E_{n}^\lambda$, $F_{n}^\lambda$,  $\Pi^\lambda$ and $\res_k^\lambda$. 
For a module $M$ in $R_{n+1}^\lambda-\amod$ 
we want to see how the functors $E_{n}^\lambda$, $F_{n}^\lambda$ 
allow to move between the components $\Pi_{i_1\dotsm i_k}(M)$
of the different categories
$R^{\xi_{i_1\dotsm i_k}(\lambda)}_n-\amod$.

Contrary to the case of $\Pi^\lambda_{i_1\dotsm i_k}$ 
the functors $\res_{i_1\dotsm i_k}^\lambda$ do not intertwine the categorical $\sln$-action.  
Nevertheless we can define functors 
\begin{equation*}
E_j^{\xi_{i_1\dotsm i_k}^{j_1\dotsm j_r}(\lambda)}, F_j^{\xi_{i_1\dotsm i_k}^{j_1\dotsm j_r}(\lambda)}
\colon
R_{n}^{\xi_{i_1\dotsm i_k}(\lambda)}-\amod\to R_{n}^{\xi_{j_1\dotsm j_r}(\lambda)}-\amod
\end{equation*}
for $j=1,\dotsc , n-1$, by
\begin{align*}
E_j^{\xi_{i_1\dotsm i_k}^{j_1\dotsm j_r}(\lambda)} &= \Pi^\lambda_{j_1\dotsm j_r}E_j^\lambda\res^\lambda_{\xi_{i_1\dotsm i_k}}
\intertext{and} 
F_j^{\xi_{i_1\dotsm i_k}^{j_1\dotsm j_r}(\lambda)} &= \Pi^\lambda_{j_1\dotsm j_r}F_j^\lambda\res^\lambda_{\xi_{i_1\dotsm i_k}}
\end{align*}

\begin{lem}
For $j\in\{1,\dotsc ,n-1\}$ the functors  $E_j^{\xi_{i_1\dotsm i_k}^{j_1\dotsm j_r}(\lambda)}$ 
and $F_j^{\xi_{i_1\dotsm i_k}^{j_1\dotsm j_r}(\lambda)}$ are zero unless $(i_1,\dotsc , i_k)=(j_1,\dotsc, j_r)$.  
In this case they coincide with the functors $E_j^{\xi_{i_1 \dotsm i_k}(\lambda)}$ and 
$F_j^{\xi_{i_1 \dotsm i_k}(\lambda)}$ inherit from the structure of categorical $\sll$-module 
on $R_{n+1}^\lambda-\amod$. 
\end{lem}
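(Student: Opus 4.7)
The statement splits naturally into three cases according to the length and content of the two tuples, and the key input is that for $j\in\{1,\dotsc,n-1\}$ the functors $E_j^\lambda$ and $F_j^\lambda$ are implemented by attaching or detaching a strand labeled $j\neq n$. Such a strand does not affect the number of strands labeled $n$, so $E_j^\lambda$ and $F_j^\lambda$ preserve the block decomposition $R_{n+1}^\lambda-\amod\cong\bigoplus_{k\geq 0}R_{n+1}^\lambda(k\alpha_n)-\amod$. First I would dispatch the length mismatch $r\neq k$: in that case $\res_{\xi_{i_1\dotsm i_k}}^\lambda$ lands in $R_{n+1}^\lambda(k\alpha_n)-\amod$, and this block is closed under $E_j^\lambda$ and $F_j^\lambda$; but $\Pi_{j_1\dotsm j_r}^\lambda$ is supported on the $r\alpha_n$-block and annihilates the $k\alpha_n$-block, so the composite vanishes.

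Now assume $r=k$. The observation already used in the proof of Lemma~\ref{lem:branch-k} is that each projection $\pi_{j_1\dotsm j_k}$ commutes with the ``append a vertical $j$-strand on the right'' map. This gives natural isomorphisms
$$\Pi_{j_1\dotsm j_k}^\lambda F_j^\lambda \cong F_j^{\xi_{j_1\dotsm j_k}(\lambda)}\Pi_{j_1\dotsm j_k}^\lambda, \qquad \Pi_{j_1\dotsm j_k}^\lambda E_j^\lambda \cong E_j^{\xi_{j_1\dotsm j_k}(\lambda)}\Pi_{j_1\dotsm j_k}^\lambda,$$
and both the $F$- and $E$-statements reduce to analysing the iterated composite $\Pi_{j_1\dotsm j_k}^\lambda\res_{\xi_{i_1\dotsm i_k}}^\lambda$. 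The surjection $\pi_k$ of Lemma~\ref{lem:cycinc} identifies its target with a \emph{direct sum} of algebras, so the central idempotents $\{e_\xi\}_{\xi\in\cD_\lambda^k}$ attached to the summands are pairwise orthogonal. A module pushed in along the $\xi_{i_1\dotsm i_k}$-component is annihilated by every $e_{\xi_{j_1\dotsm j_k}}$ with a distinct tuple, so the extension-of-scalars $\Pi_{j_1\dotsm j_k}^\lambda\res_{\xi_{i_1\dotsm i_k}}^\lambda$ vanishes. This yields the first assertion of the lemma.

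For the diagonal case $(j_1,\dotsc,j_k)=(i_1,\dotsc,i_k)$, combining the observation from Subsection~\ref{ssec:catproj} that $\bigoplus_{\xi\in\cD_\lambda^k}\Pi_\xi^\lambda\res_\xi^\lambda$ is isomorphic to the identity with the off-diagonal vanishing just established forces $\Pi_{i_1\dotsm i_k}^\lambda\res_{\xi_{i_1\dotsm i_k}}^\lambda\cong\mathrm{Id}$ on $R_n^{\xi_{i_1\dotsm i_k}(\lambda)}-\amod$. Substituting this into the intertwining isomorphism above recovers $F_j^{\xi_{i_1\dotsm i_k}(\lambda)}$ and $E_j^{\xi_{i_1\dotsm i_k}(\lambda)}$ respectively, which is precisely the $\sln$-action on $R_n^{\xi_{i_1\dotsm i_k}(\lambda)}-\amod$. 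The only step that requires real care is the off-diagonal vanishing, where one must resist arguing at the level of individual diagrams and instead use genuinely that $\pi_k$ presents its target as a direct sum of algebras with orthogonal units; once this is granted, the remainder is a formal manipulation of adjunctions and the intertwining formula from Lemma~\ref{lem:branch-k}.
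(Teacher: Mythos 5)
Your proof is correct and follows essentially the same route as the paper: you invoke the intertwining isomorphism from Lemma~\ref{lem:branch-k} to reduce everything to understanding $\Pi^\lambda_{j_1\dotsm j_r}\res^\lambda_{\xi_{i_1\dotsm i_k}}$, and then conclude from the fact that this composite is the identity when the tuples match and zero otherwise. The paper simply cites this latter fact (it is recorded in Subsection~\ref{ssec:catproj} as a consequence of $\bigoplus_{\xi}\Pi^\lambda_\xi\res^\lambda_\xi\cong\mathrm{Id}$ and the description of $\Pi^\lambda_{\ell_1\dotsm\ell_r}({}_{i_1\dotsm i_k,\und{j}}L)$), whereas you re-derive it from the orthogonality of the central idempotents in the target of $\pi_k$, and you also split off the length-mismatch case via the block decomposition in $k\alpha_n$. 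Both of these elaborations are sound, but they reproduce content the paper already established; the core argument is the same.
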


\begin{proof}
From Lemma~\ref{lem:branch-k} we have, for $j\in\{1,\dotsc ,n-1\}$,  
\begin{align*}
E_j^{\xi_{i_1\dotsm i_k}^{j_1\dotsm j_r}(\lambda)} &= \Pi^\lambda_{j_1\dotsm j_r}E_j^\lambda\res^\lambda_{\xi_{i_1\dotsm i_k}}
\cong E_j^{\xi_{j_i\dotsm j_r}(\lambda)}\Pi^\lambda_{j_1\dotsm j_r} \res^\lambda_{\xi_{i_1\dotsm i_k}} .
\end{align*} 
The claim follows from the fact that the functor 
$\Pi^\lambda_{j_1\dotsm j_r} \res^\lambda_{\xi_{i_1\dotsm i_k}}$
is the identity functor acting on $R_n^{\xi_{i_1\dotsm i_k}(\lambda)}$ if the sequences $(i_1,\dotsc , i_k)$ and $(j_1,\dotsc, j_r)$  
are equal
or the zero functor if they are different.  
The same reasoning proves the case of 
$E_j^{\xi_{i_1\dotsm i_k}^{j_1\dotsm j_r}(\lambda)}$. 
\end{proof}

\begin{defn}
For $i\in\{1,\dotsc ,n-1\}$ we define the functors
\begin{align*}
F_j^\xi := \bigoplus\limits_{\xi_{i_1\dotsm i_k}\in\cD_\lambda} F_j^{\xi_{i_1\dotsm i_k}^{i_1\dotsm i_k}(\lambda)}  
\mspace{25mu}\text{and}\mspace{25mu} 
E_j^\xi := \bigoplus\limits_{\xi_{i_1\dotsm i_k}\in\cD_\lambda} E_j^{\xi_{i_1\dotsm i_k}^{i_1\dotsm i_k}(\lambda)} 
\end{align*}
with the obvious source and target categories.
\end{defn}

Let us now treat the case of the functors
 $F_n^\lambda$ and $E_n^\lambda$. 
Each object ${}_{i_1\dotsm i_k,\und{j}}L$ in  $\widetilde{R}^\lambda_{n+1}-\prmod$ is also an object in $R^\lambda_{n+1}-\amod$ 
which is not projective in general. 
It is not hard to see that 
%any surjection $P\to{}_{i_1\dotsm i_k,\und{j}}L$ 
%from a projective in $R^\lambda_{n+1}-\fmod$ factors through 
%the canonical projection ${}_{e(p_{i_1\dotsm i_k},\und{j})}P\to {}_{i_1\dotsm i_k,\und{j}}L$. Thus 
the projective cover of ${}_{i_1\dotsm i_k,\und{j}}L$ in $R^\lambda_{n+1}-\amod$ is 
%the projective 
${}_{e(p_{i_1\dotsm i_k},\und{j})}P$.  
%in $R^\lambda_{n+1}-\prmod$.  
Lemmas~\ref{lem:frobKLR} and~\ref{lem:BRfull} together with Corollary~\ref{cor:BRinj} imply 
that every object in $R^\lambda_{n+1}-\prmod$ arises this way.  
For an endofunctor $G$ acting on $R^\lambda_{n+1}-\prmod$ we define a functor $\widetilde{G}$ on 
$\widetilde{R}^\lambda_{n+1}-\prmod$ as follows. 
For an object $M$ in $\widetilde{R}^\lambda_{n+1}-\prmod$ 
we define $\widetilde{G}(M)$ as $QG(P(M))$
where $P(M)$ is the projective cover of $M$ in  
${R}^\lambda_{n+1}-\amod$ and $\cQ=\oplus_{k\geq 0}\cQ_k$ is the quotient functor from Eq.~\eqref{eq:quotQ}. 
The action of $\widetilde{G}$  on a morphism $f$ in $\Hom_{\widetilde{R}^\lambda_{n+1}-\prmod}(M, M')$ 
is defined in an analogous way. This operations are well defined, because the composite 
$QG'P(QGP(M))$ is isomorphic to $QG'GP(M)):=\widetilde{G'G}(M)$ for $G'$ an endofunctor on ${R}^\lambda_{n+1}-\prmod$.   
For morphisms $f$, $f'$ we observe that $P(QGP(f))$ equals $GP(f)$  yielding 
$\widetilde{G}(f'f)=\widetilde{G}(f')\widetilde{G}(f)$. 
\begin{lem}
The pair of (biadjoint) endofunctors $\{\widetilde{F}_n^\lambda,\widetilde{E}_n^\lambda \}$ 
take projectives to projectives and 
define a categorical $\mathfrak{sl}_2$-action on  $\widetilde{R}^\lambda_{n+1}-\prmod$. 
\end{lem}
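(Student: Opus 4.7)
The plan is to transfer the categorical $\mathfrak{sl}_2$-action carried by the pair $(F_n^\lambda, E_n^\lambda)$ on $R^\lambda_{n+1}-\amod$ (which exists as the restriction of the Kang--Kashiwara--Webster categorical $\mathfrak{sl}_{n+1}$-action recalled in~\eqref{eq:Uaction}) through the construction $\widetilde{G} = \mathcal{Q}\,G\,P(-)$. The crucial preliminary observation is that for any projective $M = {}_{i_1\dotsm i_k,\und{j}}L$ in $\widetilde{R}^\lambda_{n+1}-\prmod$, its projective cover in $R^\lambda_{n+1}-\amod$ is the special projective $P(M) = {}_{e(p_{i_1\dotsm i_k},\und{j})}P$, so that $\mathcal{Q}\,P(M) \cong M$. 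This identity $\mathcal{Q}P \cong \mathrm{Id}$ on projectives is what allows properties on $R^\lambda_{n+1}-\amod$ to descend to $\widetilde{R}^\lambda_{n+1}-\prmod$.

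For ``projectives to projectives'': the induction functor $F_n^\lambda$ sends ${}_{e(p_{i_1\dotsm i_k},\und{j})}P$ to ${}_{e(p_{i_1\dotsm i_k},\und{j}n)}P$, and by Proposition~\ref{prop:genfactoriz} this factors as a direct sum of special projectives; applying $\mathcal{Q}$ then yields a direct sum of ${}_{\ell_1\dotsm \ell_{k+1},\und{m}}L$'s, each projective in $\widetilde{R}^\lambda_{n+1}-\prmod$ by the lemma preceding the statement. For $E_n^\lambda = \Res_n\{\cdot\}$, the Frobenius property (Lemma~\ref{lem:frobKLR}) combined with biadjointness of $(F_n^\lambda, E_n^\lambda)$ ensures that restriction preserves projectives on the $R^\lambda_{n+1}$-side, and the same factorization argument applies after passing through $\mathcal{Q}$.

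Biadjointness of $(\widetilde{F}_n^\lambda, \widetilde{E}_n^\lambda)$ then follows from the chain
\begin{equation*}
\Hom_{\widetilde{R}}(\widetilde{F}_n^\lambda M, N) \cong \Hom_{R}(F_n^\lambda P(M), P(N)) \cong \Hom_{R}(P(M), E_n^\lambda P(N)) \cong \Hom_{\widetilde{R}}(M, \widetilde{E}_n^\lambda N),
\end{equation*}
where the outer isomorphisms use $\mathcal{Q}P \cong \mathrm{Id}$ on projectives together with the adjunction between $\mathcal{Q}$ (extension of scalars along $R^\lambda_{n+1}(k\alpha_n) \twoheadrightarrow \widetilde{R}^\lambda_{n+1}(k\alpha_n)$) and restriction of scalars, and the middle isomorphism is the biadjunction on $R^\lambda_{n+1}-\amod$. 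The symmetric computation gives the other adjunction.

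The hardest step is verifying the defining $\mathfrak{sl}_2$-commutation relation, i.e.\ the natural isomorphism between $\widetilde{E}_n^\lambda \widetilde{F}_n^\lambda$ and $\widetilde{F}_n^\lambda \widetilde{E}_n^\lambda$ up to a direct sum of identity functors, corresponding to $[E_n, F_n] = (K_n - K_n^{-1})/(q-q^{-1})$ on the Grothendieck group. The approach is first to establish that $\widetilde{G'}\widetilde{G}(M) \cong \mathcal{Q}\,G'G\,P(M)$ whenever $GP(M)$ is projective in $R^\lambda_{n+1}-\amod$, because then $P\mathcal{Q}GP(M) \cong GP(M)$ and the intermediate quotient disappears; this applies to $G, G' \in \{F_n^\lambda, E_n^\lambda\}$ by the previous step. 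The known categorical $\mathfrak{sl}_2$-relation for $(F_n^\lambda, E_n^\lambda)$ on $R^\lambda_{n+1}-\amod$ then descends through the exact functor $\mathcal{Q}$. The main obstacle is a combinatorial bookkeeping: one must check that the $\mathfrak{sl}_2$-weight controlling the multiplicity of the direct sum of identities on each summand ${}_{i_1\dotsm i_k,\und{j}}L$ matches the weight of $P(M) = {}_{e(p_{i_1\dotsm i_k},\und{j})}P$ in $R^\lambda_{n+1}-\amod$, which reduces to matching $\alpha_n^\vee$ evaluated against $\lambda - \nu$ with the corresponding evaluation governed by the formulas for $\xi_{i_1\dotsm i_k}(\lambda)$ and $\nu(i_1\dotsm i_k)$ from Subsection~\ref{ssec:factidemp}.
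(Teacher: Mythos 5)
Your overall strategy matches what the paper leaves implicit: the entire lemma is meant to follow from the tilde construction $\widetilde{G} = \cQ\,G\,P(-)$, the observation that $\cQ P \cong \mathrm{Id}$ on the special projectives ${}_{i_1\dotsm i_k,\und{j}}L$, and the composition compatibility $\widetilde{G'}\widetilde{G}(M)\cong\cQ\,G'G\,P(M)$ whenever $GP(M)$ is projective (which the preceding paragraph of the paper records). Your treatment of ``projectives to projectives'' and of the descent of the $\mathfrak{sl}_2$-relation through the exact functor $\cQ$ is sound and is essentially the argument the author intends.

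However, the biadjointness chain as you have written it contains a gap. The first and third isomorphisms rely on the identification
$\Hom_{\widetilde{R}}\bigl(\cQ A,\cQ B\bigr)\cong\Hom_R(A,B)$
for $A,B$ projective over $R^\lambda_{n+1}$, and this is false in general: writing $A=eR$, $B=fR$ one has $\Hom_R(A,B)=fRe$ while $\Hom_{\widetilde{R}}(\cQ A,\cQ B)=f\widetilde{R}e=fRe/fKe$, so the map is only a surjection, and $fKe$ is genuinely nonzero here (the kernel $K$ contains, e.g., crossings among the strands making up $p_{i_1\dotsm i_k}$). Equivalently, after applying the adjunction $\cQ\dashv\mathrm{Res}_\cQ$ what you obtain is $\Hom_R(F_n^\lambda P(M),\mathrm{Res}_\cQ N)$, and $\mathrm{Res}_\cQ N$ is the quotient module ${}_{i_1\dotsm i_k,\und{j}}L$, not its projective cover $P(N)$. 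The correct way to get biadjointness is the same tool you already invoke for the $\mathfrak{sl}_2$-relation: transfer the unit and counit of the biadjunction $(F_n^\lambda,E_n^\lambda)$ through the composition isomorphism. Since both $F_n^\lambda$ and $E_n^\lambda$ preserve projectives, one has $\widetilde{E}_n^\lambda\widetilde{F}_n^\lambda\cong\cQ E_n^\lambda F_n^\lambda P$ and $\widetilde{F}_n^\lambda\widetilde{E}_n^\lambda\cong\cQ F_n^\lambda E_n^\lambda P$, so applying $\cQ(-)P$ to the unit $\mathrm{Id}\to E_n^\lambda F_n^\lambda$ and counit $F_n^\lambda E_n^\lambda\to\mathrm{Id}$ (and to their mates) produces candidate units and counits for $(\widetilde{F}_n^\lambda,\widetilde{E}_n^\lambda)$; the zig-zag identities then follow from those at the $R^\lambda_{n+1}$-level because $\cQ(-)P$ is compatible with composition on these projectives. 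You should replace the Hom-space chain with this argument.

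One smaller point: the final ``combinatorial bookkeeping'' worry is not really an issue. The multiplicity of the identity summands in the $\mathfrak{sl}_2$-relation for $(F_n^\lambda,E_n^\lambda)$ is controlled by $\alpha_n^\vee(\lambda-\nu)$ on the block $R^\lambda_{n+1}(\nu+k\alpha_n)$, and both $\cQ$ and $P$ preserve the $\Lambda^{n+1}$-grading; no translation through $\xi_{i_1\dotsm i_k}(\lambda)$ or $\nu(i_1\dotsm i_k)$ is required at this stage.
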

This action extends canonically to a categorical action on $\widetilde{R}^\lambda_{n+1}-\amod$. 
We now use this result to construct a $\mathfrak{sl}_2$-pair of functors $\{F_n^\xi, E_n^\xi\}$ acting 
on the category 
$\bigoplus_{\xi_{i_1\dotsm i_k}\in\cD_\lambda}R^{\xi_{i_1\dotsm i_k}(\lambda)}_n-\amod$. 
We first define the functors
\begin{equation*}
{F}_n^{\xi_{i_1\dotsm i_k}^{j_1\dotsm j_r}(\lambda)}, {E}_n^{\xi_{i_1\dotsm i_k}^{j_1\dotsm j_r}(\lambda)}
\colon
R_{n}^{\xi_{i_1\dotsm i_k}(\lambda)}-\amod\to R_{n}^{\xi_{j_1\dotsm j_r}(\lambda)}-\amod
\end{equation*}
by
\begin{align*}
{F}_n^{\xi_{i_1\dotsm i_k}^{j_1\dotsm j_r}(\lambda)} &= \widetilde{\Pi}^\lambda_{j_1\dotsm j_r}\widetilde{F}_n^\lambda\widetilde{\res}^\lambda_{\xi_{i_1\dotsm i_k}}
\intertext{and} 
{E}_n^{\xi_{i_1\dotsm i_k}^{j_1\dotsm j_r}(\lambda)} &= \widetilde{\Pi}^\lambda_{j_1\dotsm j_r}\widetilde{E}_n^\lambda\widetilde{\res}^\lambda_{\xi_{i_1\dotsm i_k}}
\end{align*}
Both functors are zero if the sequences $(i_1,\dotsc , i_k)$ and $(j_1,\dotsc, j_r)$ 
have the same length, since this would mean that the source and target categories 
would correspond to weights of the form $\nu+k\alpha_n$ with $\nu=\nu_1\alpha_1+\dotsm + \nu_{n-1}\alpha_{n-1}$ 
with common $k$ (all diagrams in $\widetilde{R}^\lambda_{n+1}-\prmod$ and ${R}^\lambda_{n+1}-\prmod$  
would contain the same number of strands labeled $n$) 
and both functors $E_n^\lambda$ and $F_n^\lambda$ 
(and $\widetilde{E}_n^\lambda$, $\widetilde{F}_n^\lambda$)
change the number of strands labeled $n$.   
Summing over all ${i_1\dotsm i_k}$ and all $k$ we get the functors
\begin{equation*}
{F}_n^{\xi},\ {E}_n^{\xi} 
\colon
\bigoplus\limits_{\xi_{i_1\dotsm i_k}\in\cD_\lambda}R_{n}^{\xi_{i_1\dotsm i_k}(\lambda)}-\amod
\ \to\
\bigoplus\limits_{\xi_{i_1\dotsm i_k}\in\cD_\lambda}R_{n}^{\xi_{i_1\dotsm i_k}(\lambda)}-\amod
\end{equation*}
given by
\begin{equation*}
{F}_n^{\xi} := \bigoplus\limits_{\substack{\xi_{i_1\dotsm i_k}\in\cD_\lambda\\[0.5ex] \xi_{j_1\dotsm j_r}\in\cD_\lambda}}
{F}_n^{\xi_{i_1\dotsm i_k}^{j_1\dotsm j_r}(\lambda)} 
\mspace{45mu}\text{and}\mspace{45mu}
{E}_n^{\xi} := \bigoplus\limits_{\substack{\xi_{i_1\dotsm i_k}\in\cD_\lambda\\[0.5ex] \xi_{j_1\dotsm j_r}\in\cD_\lambda}}
{E}_n^{\xi_{i_1\dotsm i_k}^{j_1\dotsm j_r}(\lambda)} . 
\end{equation*}

\begin{prop}
The functors ${F}_n^\xi$ and ${E}_n^\xi$ take projectives to projectives and 
define a categorical $\mathfrak{sl}_2$-action on 
$$\bigoplus\limits_{\xi_{i_1\dotsm i_k}\in\cD_\lambda}R^{\xi_{i_1\dotsm i_k}(\lambda)}_n-\amod .$$  
\end{prop}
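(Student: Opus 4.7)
The plan is to transport the categorical $\mathfrak{sl}_2$-action on $\widetilde{R}^\lambda_{n+1}-\amod$ supplied by the preceding lemma through the equivalence of categories
\begin{equation*}
\widetilde\Pi^\lambda\colon \widetilde{R}^\lambda_{n+1}-\amod \ \longleftrightarrow\ \bigoplus_{\xi_{i_1\dotsm i_k}\in\cD_\lambda}R^{\xi_{i_1\dotsm i_k}(\lambda)}_n-\amod\ \colon \widetilde{\res}^\lambda
\end{equation*}
established in Subsection~\ref{ssec:catproj}. Summing the defining formulas of the components over all pairs of sequences $(i_1,\dotsc,i_k)$ and $(j_1,\dotsc,j_r)$ yields natural isomorphisms $F_n^\xi\cong\widetilde\Pi^\lambda\,\widetilde F_n^\lambda\,\widetilde{\res}^\lambda$ and $E_n^\xi\cong\widetilde\Pi^\lambda\,\widetilde E_n^\lambda\,\widetilde{\res}^\lambda$, since by definition $\widetilde\Pi^\lambda=\bigoplus_{j_1\dotsm j_r}\widetilde\Pi^\lambda_{j_1\dotsm j_r}$ and $\widetilde{\res}^\lambda=\bigoplus_{i_1\dotsm i_k}\widetilde{\res}^\lambda_{i_1\dotsm i_k}$.

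Next, using $\widetilde{\res}^\lambda\widetilde\Pi^\lambda\cong\mathrm{id}$, one inserts identities in the middle of any composition of $F_n^\xi$'s and $E_n^\xi$'s to obtain an isomorphism with $\widetilde\Pi^\lambda\,\Phi\,\widetilde{\res}^\lambda$, where $\Phi$ is the corresponding composition of $\widetilde F_n^\lambda$'s and $\widetilde E_n^\lambda$'s. In particular the biadjunction of the pair $(F_n^\xi,E_n^\xi)$, and the natural isomorphisms expressing the $\mathfrak{sl}_2$ commutation relations on weight spaces, transport from the analogous data on $\widetilde R^\lambda_{n+1}-\amod$; the weight grading by $k$ (the number of strands labeled $n$) on $\widetilde R^\lambda_{n+1}-\amod$ corresponds under the equivalence to the grading of the target category by the length $k$ of the sequence $(i_1,\dotsc,i_k)$.

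For the projectivity claim I would check that each of the three functors in the composition preserves projectives: $\widetilde{\res}^\lambda$ sends a projective ${}_{\und j}P$ over $R^{\xi_{i_1\dotsm i_k}(\lambda)}_n$ to ${}_{i_1\dotsm i_k,\und j}L$, which is projective in $\widetilde R^\lambda_{n+1}-\amod$ by the explicit idempotent realization established just above in Subsection~\ref{ssec:catproj}; $\widetilde F_n^\lambda$ and $\widetilde E_n^\lambda$ preserve projectives by the lemma quoted immediately above; and $\widetilde\Pi^\lambda$ preserves projectives because its components $\widetilde\Pi^\lambda_{j_1\dotsm j_r}$ do, as noted just after their introduction in Subsection~\ref{ssec:catbranching}. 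The main point requiring care is the weight-by-weight bookkeeping, i.e.\ confirming that $\widetilde F_n^\lambda$ and $\widetilde E_n^\lambda$ move between the components indexed by $\cD_\lambda^k$ and $\cD_\lambda^{k\pm 1}$ in the way dictated by Theorem~\ref{thm:branchrules}, so that the transported pair $(F_n^\xi,E_n^\xi)$ genuinely encodes a single branching step rather than an uncontrolled mixing of blocks.
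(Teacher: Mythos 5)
Your proof is correct and takes essentially the same approach as the paper: both express $F_n^\xi$ and $E_n^\xi$ as conjugates $\widetilde{\Pi}^\lambda\,\widetilde{F}_n^\lambda\,\widetilde{\res}^\lambda$ and $\widetilde{\Pi}^\lambda\,\widetilde{E}_n^\lambda\,\widetilde{\res}^\lambda$, then use $\widetilde{\res}^\lambda\widetilde{\Pi}^\lambda\cong\mathrm{id}$ to insert identities in the middle of compositions so that biadjunction and the $\mathfrak{sl}_2$ relations transport from $\widetilde{R}^\lambda_{n+1}-\amod$. Your treatment of the projectivity claim, tracing it through the three factors of the conjugate, is a bit more explicit than the paper's one-line appeal to the definition of $\widetilde{F}_n^\lambda,\widetilde{E}_n^\lambda$, but the substance is the same.
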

\begin{proof}
The first claim is a consequence of the definition of 
functors $\{ \widetilde{F}_n^\lambda, \widetilde{E}_n^\lambda\}$.
Biadjointness is a consequence of biadjointness of the pair $\{ F_n, E_n \}$ and the definition of $\{F_n^\xi, E_n^\xi \}$. 
Since $\bigoplus_{\xi_{i_1\dotsm i_k}\in\cD_\lambda^k}\widetilde{\res}^\lambda_{i_1\dotsm i_k}\widetilde{\Pi}_{i_i\dotsm i_k}^\lambda$ is the 
identity functor on $\widetilde{R}_{n+1}^\lambda(k\alpha_n)-\amod$ we have
\begin{align*}
{F}_n^{\xi}
{E}_n^{\xi}
&= \bigoplus\limits_{\substack{\xi_{i_1\dotsm i_k}\in\cD_\lambda\\[0.5ex] \xi_{j_1\dotsm j_r}\in\cD_\lambda\\[0.5ex]
\xi_{\ell_1\dotsm \ell_s}\in\cD_\lambda\\[0.5ex] \xi_{m_1\dotsm m_t}\in\cD_\lambda}}
\widetilde{\Pi}^\lambda_{j_1\dotsm j_r}\widetilde{F}_n^\lambda\widetilde{\res}^\lambda_{\xi_{j_1\dotsm j_r}}
\widetilde{\Pi}^\lambda_{\ell_1\dotsm \ell_s}\widetilde{E}_n^\lambda\widetilde{\res}^\lambda_{\xi_{m_1\dotsm m_t}}
\\[1ex]
&\cong
\bigoplus\limits_{\substack{\xi_{i_1\dotsm i_k}\in\cD_\lambda\\[0.5ex] \xi_{m_1\dotsm m_t}\in\cD_\lambda}}
\widetilde{\Pi}^\lambda_{j_1\dotsm j_r}\widetilde{F}_n^\lambda 
\widetilde{E}_n^\lambda\widetilde{\res}^\lambda_{\xi_{m_1\dotsm m_t}} 
\intertext{and} 
{E}_n^{\xi}
{F}_n^{\xi}
&\cong
\bigoplus\limits_{\substack{\xi_{i_1\dotsm i_k}\in\cD_\lambda\\[0.5ex] \xi_{j_1\dotsm j_r}\in\cD_\lambda}}
\widetilde{\Pi}^\lambda_{i_1\dotsm i_k}\widetilde{E}_n^\lambda 
\widetilde{F}_n^\lambda\widetilde{\res}^\lambda_{\xi_{j_1\dotsm j_r}}  
\end{align*}
and the claim follows. 
\end{proof}

\begin{cor}
The functors $\{F_j^\xi, E_j^\xi\}_{\in \{1,\dotsc ,n\}}$ define a categorical $\sll$-action on 
$$\bigoplus\limits_{\xi_{i_1\dotsm i_k}\in\cD_\lambda}R^{\xi_{i_1\dotsm i_k}(\lambda)}_n-\amod .$$  
\end{cor}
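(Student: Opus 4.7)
The plan is to reduce to the categorical $\sll$-action on $R^\lambda_{n+1}-\amod$ given by \eqref{eq:Uaction} and transport it through the equivalence $\widetilde{\Pi}^\lambda\colon\widetilde R^\lambda_{n+1}-\amod\xrightarrow{\sim}\bigoplus_{\xi_{i_1\dotsm i_k}\in\cD_\lambda}R_n^{\xi_{i_1\dotsm i_k}(\lambda)}-\amod$ established after Lemma~\ref{lem:BRfull}. The $\mathfrak{sl}_2$-part of the statement, for the pair $\{F_n^\xi,E_n^\xi\}$, is exactly the preceding proposition, while the $\mathfrak{sl}_n$-part, for $\{F_j^\xi,E_j^\xi\}_{j<n}$, is immediate from Lemma~\ref{lem:branch-k}: each $F_j^\xi$ (resp.\ $E_j^\xi$) intertwines through $\Pi^\lambda$ with $F_j^\lambda$ (resp.\ $E_j^\lambda$), so the Serre and commutation relations among these generators are inherited directly from the $\mathfrak{sl}_n$-substructure of the categorical $\sll$-action on $R^\lambda_{n+1}-\amod$. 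Only the cross-relations between $\{F_n^\xi,E_n^\xi\}$ and $\{F_j^\xi,E_j^\xi\}_{j<n}$ remain, namely $[E_n^\xi,F_j^\xi]\cong 0$, $F_j^\xi F_n^\xi\cong F_n^\xi F_j^\xi$, $E_j^\xi E_n^\xi\cong E_n^\xi E_j^\xi$ for $|j-n|>1$, and the Serre relations at $j=n-1$.

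To handle these cross-relations uniformly, I would apply the tilding operation $G\mapsto\widetilde G:=\cQ\,G\,\cP$ used before the $\mathfrak{sl}_2$-proposition to \emph{every} Chevalley functor $F_j^\lambda,E_j^\lambda$ and not only to $F_n^\lambda,E_n^\lambda$. The structural properties already exploited for $j=n$ transfer verbatim: the operation is well-defined on objects and morphisms, is multiplicative ($\widetilde{G'G}\cong\widetilde{G'}\,\widetilde G$), and preserves biadjunctions. Consequently every categorical identity satisfied by the family $\{F_j^\lambda,E_j^\lambda\}_{j=1,\dotsc,n}$ on $R^\lambda_{n+1}-\amod$ descends to the corresponding identity for $\{\widetilde F_j^\lambda,\widetilde E_j^\lambda\}$ on $\widetilde R^\lambda_{n+1}-\amod$, producing a categorical $\sll$-action on the latter category.

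Finally I would identify the tilded family with $\{F_j^\xi,E_j^\xi\}$ under $\widetilde\Pi^\lambda$. For $j=n$ this identification is the content of the preceding proposition; for $j<n$ it follows from the factorization $\Pi^\lambda\cong\widetilde\Pi^\lambda\circ\cQ$ together with Lemma~\ref{lem:branch-k}. Transporting the $\sll$-action from $\widetilde R^\lambda_{n+1}-\amod$ along the equivalence $\widetilde\Pi^\lambda$ then yields the required categorical $\sll$-action on $\bigoplus_{\xi_{i_1\dotsm i_k}\in\cD_\lambda}R^{\xi_{i_1\dotsm i_k}(\lambda)}_n-\amod$.

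The main obstacle is the verification that the tilding procedure is genuinely well-defined and multiplicative for \emph{all} Chevalley generators simultaneously, i.e.\ that $F_j^\lambda$ and $E_j^\lambda$ for $j<n$ preserve (up to the relevant isomorphisms on induced modules) the ideals $J_{i_1\dotsm i_k}$ collapsed by $\cQ$, so that $\widetilde F_j^\lambda,\widetilde E_j^\lambda$ actually pass to $\widetilde R^\lambda_{n+1}-\amod$. Diagrammatically this reduces to the observation that inserting or extracting a strand labeled $j\neq n$ on the right of a diagram does not introduce crossings or dots among the leftmost frozen strands defining $e(p_{i_1\dotsm i_k},\underline{j})$, which is transparent from the description of these idempotents in Subsection~\ref{ssec:factidemp}; once multiplicativity and well-definedness across all indices is secured, the descent of the relations in \eqref{eq:Uaction} is automatic.
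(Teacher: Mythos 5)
The paper gives no explicit proof for this corollary — it is left as an immediate consequence of the two preceding items, namely the lemma identifying $\{F_j^\xi,E_j^\xi\}_{j<n}$ with the inherited $\sln$-action and the proposition establishing the categorical $\mathfrak{sl}_2$-action of $\{F_n^\xi,E_n^\xi\}$ — so your fully written-out argument is supplying detail the paper omits, which is fine. Your overall strategy is correct and is in the spirit of what the paper does: reduce everything to the known action on $R^\lambda_{n+1}-\amod$ and transport it along the equivalence $\widetilde\Pi^\lambda$.

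One remark on the mechanics: your idea of applying the tilding operation $G\mapsto\widetilde G:=\cQ G\cP$ uniformly to all Chevalley generators is a clean unification, but for $j<n$ it is actually simpler (and faithful to the paper's viewpoint) to observe that $F_j^\lambda$ and $E_j^\lambda$ preserve the ideals $J_{i_1\dotsm i_k}$ outright, because adding or stripping a strand labeled $j\neq n$ on the right of a diagram never touches the leftmost frozen block carrying the $p_{i_1\dotsm i_k}$-strands. Hence $F_j^\lambda,E_j^\lambda$ pass directly to endofunctors of $\widetilde R^\lambda_{n+1}-\amod$ without any detour through projective covers, and coincide there with $\widetilde F_j^\lambda,\widetilde E_j^\lambda$. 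Using this shortcut makes the identification of $\{F_j^\xi,E_j^\xi\}_{j<n}$ under $\widetilde\Pi^\lambda$ immediate from Lemma~\ref{lem:branch-k}, whereas in your formulation the same fact is deferred to the verification at the end of the proposal. Either way, once the tilding is known to be well-defined and multiplicative across all indices (as the paper asserts for $G'$ an arbitrary endofunctor of $R^\lambda_{n+1}-\prmod$), the full set of categorical $\sll$-relations — including the cross-relations between $j=n$ and $j<n$ — descends from $R^\lambda_{n+1}-\amod$ to $\widetilde R^\lambda_{n+1}-\amod$ and transports across the equivalence, which is exactly what you argue. Your proof is correct.
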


\begin{cor}
With the action $E_n^\xi$ and $F_n^\xi$ as above the surjection $K_0(\Pi^\lambda)$ in  
Proposition~\ref{prop:branchrules} is a surjection of $\sll$-representations. 
\end{cor}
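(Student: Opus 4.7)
The plan is to show that the isomorphism $K_0(\Pi^\lambda)$, which we already know intertwines the $\sln$-action by the Corollary just before, also intertwines the action of the remaining generators $E_n$ and $F_n$. By biadjointness of $(F_n^\lambda,E_n^\lambda)$ and of $(F_n^\xi,E_n^\xi)$, it suffices to handle $F_n$ (the case of $E_n$ follows by passing to adjoints at the level of $K_0$).

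The key is the factorization $\Pi^\lambda\cong\widetilde{\Pi}^\lambda\circ\cQ$, where $\cQ=\oplus_k\cQ_k$ is the quotient functor of~\eqref{eq:quotQ} and $\widetilde{\Pi}^\lambda$ is the equivalence of categories established in Subsection~\ref{ssec:catproj}. Using this, $F_n^\xi\Pi^\lambda(M) = \widetilde{\Pi}^\lambda\widetilde{F}_n^\lambda\widetilde{\res}^\lambda\widetilde{\Pi}^\lambda\cQ(M)\cong \widetilde{\Pi}^\lambda\widetilde{F}_n^\lambda\cQ(M)$, since $\widetilde{\res}^\lambda\widetilde{\Pi}^\lambda$ is isomorphic to the identity. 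So what must be checked is the natural isomorphism
\begin{equation*}
\widetilde{F}_n^\lambda\,\cQ\ \cong\ \cQ\,F_n^\lambda
\end{equation*}
on projectives. For any projective $P={}_{e(p_{i_1\dotsm i_k},\und{j})}P$ in $R^\lambda_{n+1}-\prmod$, the module $\cQ(P)={}_{i_1\dotsm i_k,\und{j}}L$ has projective cover $P$ back in $R^\lambda_{n+1}-\amod$, as noted in Subsection~\ref{ssec:catproj}. By the very definition $\widetilde{G}(M):=\cQ\,G\,(P(M))$, this gives $\widetilde{F}_n^\lambda(\cQ P) = \cQ\,F_n^\lambda(P)$, and this identification is natural in morphisms between projectives because applying the projective cover to morphisms in $\widetilde{R}_{n+1}^\lambda-\prmod$ obtained from $\cQ$ just recovers the original map in $R_{n+1}^\lambda-\prmod$.

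Combining the two identifications yields $F_n^\xi\,\Pi^\lambda\cong \widetilde{\Pi}^\lambda\,\cQ\,F_n^\lambda = \Pi^\lambda\,F_n^\lambda$ at the level of functors $R^\lambda_{n+1}-\prmod \to \bigoplus_{\xi\in\cD_\lambda} R_n^{\xi(\lambda)}-\prmod$. Passing to Grothendieck groups this translates into the identity $K_0(\Pi^\lambda)\circ F_n = F_n\circ K_0(\Pi^\lambda)$, as required; the analogous identity for $E_n$ follows by biadjointness. Together with the Corollary intertwining the $\sln$-action, this shows that $K_0(\Pi^\lambda)$ is $\sll$-equivariant, proving the claim.

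The main subtlety is step checking $\widetilde{F}_n^\lambda\,\cQ\cong \cQ\,F_n^\lambda$ in enough generality to descend to $K_0$: one must be careful that the projective cover $P(\cQ M)$ is well-behaved under $F_n^\lambda$, which is only evident on projective $M$. Fortunately $K_0$ is computed on projectives, so working in $R^\lambda_{n+1}-\prmod$ throughout is sufficient, and the Frobenius property (Lemma~\ref{lem:frobKLR}) guarantees the necessary finiteness to pass to the Grothendieck group.
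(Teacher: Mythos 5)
Your proposal is correct and follows essentially the same route as the paper's own proof: both rewrite $F_n^\xi\Pi^\lambda$ by unwinding the definitions of $F_n^\xi$, $\widetilde F_n^\lambda$ (via projective covers) and the quotient functor $\cQ$, then use $\widetilde\Pi^\lambda\widetilde\res^\lambda\cong\mathrm{id}$ and the fact that the projective cover of $\cQ(M)$ is $M$ for projective $M$ to land on $\Pi^\lambda F_n^\lambda(M)$. Your formulation is a touch more explicit in isolating the commutation $\widetilde F_n^\lambda\cQ\cong\cQ F_n^\lambda$ as the key step and in stating the biadjointness reduction to the $F_n$ case (which the paper only treats implicitly), but these are presentational refinements, not a different argument.
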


\begin{proof}
It is enough to show that the functor $\Pi^\lambda$ intertwines the categorical 
$\mathfrak{sl}_2$-action defined by $\{F_n^\xi,E_n^\xi\}$. 
For an object $M$ in $R_{n+1}^\lambda-\prmod$ we have 
\begin{align*}
F_n^\xi\Pi^\lambda(M)
&=\widetilde{\Pi}^\lambda\widetilde{F}_n^\lambda\widetilde{res}^\lambda\Pi^\lambda(M)
\\
&=\widetilde{\Pi}^\lambda \cQ{F}_n^\lambda P(\widetilde{res}^\lambda\Pi^\lambda(M))
\\
&=\widetilde{\Pi}^\lambda \widetilde{\res}^\lambda\Pi^\lambda{F}_n^\lambda P(\widetilde{\res}^\lambda\Pi^\lambda(M))
\\
&=\Pi^\lambda{F}_n^\lambda P(\widetilde{\res}^\lambda\Pi^\lambda(M))
\end{align*} 
The claim now follows from a
 comparison between the vector spaces $P(\widetilde{\res}^\lambda\Pi^\lambda(M))$ and $M$. 
\end{proof}

%%%%%%%%%%%%%%%%%%%%%%%%%%%%%%%%%%%%%%%%%%%%%%%%%%%%%%%%%%
\subsection{The cyclotomic quotient conjecture revisited}%
%%%%%%%%%%%%%%%%%%%%%%%%%%%%%%%%%%%%%%%%%%%%%%%%%%%%%%%%%%
\label{ssec:easyBK}

We can now give an elementary proof of the Khovanov-Lauda cyclotomic conjecture in Type $A$. 
Recall that from Proposition~\ref{prop:branchrules} we have a surjection of  $\sll$-representations 
\begin{align*} 
K_0(\Pi^{\lambda} )
\colon 
K_0 ( R^\lambda_{n+1} )
\to 
%\xra{\ \ \cong\ \ }
\bigoplus\limits_{\xi_{i_1\dotsm i_k}\in\cD_\lambda}K_0(R^{\xi_{i_1\dotsm i_k}(\lambda)}_n)  
\end{align*}
and so, if we know that 
$K_0(R^{\xi_{i_1\dotsm i_k}(\lambda)}_n) \cong  V^{\xi_{i_1\dotsm i_k}(\lambda)}_{\sln}$
we are done. 
The cyclotomic conjecture for $\sll$ follows from the cyclotomic conjecture for $\mathfrak{sl}_2$  
by recursion, which in turn is a consequence of the fact that in $R^{\mu}_2$ we have $1_{\bar{\mu}+1}=0$, where 
$1_{\bar{\mu}+1}=0$ is the diagram consisting of ${\bar{\mu}+1}$ vertical parallel strands.

%%%%%%%%%%%%%%%%%%%%%%%%%%%%%%%%%%%%%%%%%%%%%%%%%%%%%%%%%%%%%%%%%%%%%%%%%%%%%%
\subsection{Classes of special indecomposables and the Gelfand-Tsetlin basis}%
%%%%%%%%%%%%%%%%%%%%%%%%%%%%%%%%%%%%%%%%%%%%%%%%%%%%%%%%%%%%%%%%%%%%%%%%%%%%%%

Applying the procedure described in Section~\ref{sec:catbranching} 
recursively we end up with a direct sum 
of $d_\lambda:=\dim(V_\lambda^{\sll})$ one-dimensional $\Bbbk$-vector spaces. 
We  now reverse this procedure. 

\begin{defn}
For each Gelfand-Tsetlin pattern $s\in\cS(\lambda)$ we define a functor
\begin{equation*}
\res^s := \res^{\lambda}_{\mu^n}\res^{\mu^n}_{\mu^{n-1}}\dotsm\res^{\mu^1} 
\colon 
\Bbbk-\amod \to R_{n+1}^\lambda-\amod.
\end{equation*}
\end{defn}
Here each of the functors $\res^{\mu^j}_{\mu^{j-1}}$ is the restriction functor 
corresponding to the surjection $\pi_{\mu^j,\mu^{j-1}}\colon R_j^{\mu^{j}}\to R_{j-1}^{\mu^{j-1}}$, 
as in Lemma~\ref{lem:cycinc} which uniquely determines a sequence  
$(i_{1_j}\dotsm i_{k_j})$.
Functor $\res^s$ takes the one-dimensional $\Bbbk$-module $\Bbbk$ to the module 
${}_{e(p_{i_{1_1}\dotsm i_{k_1}},p_{i_{1_2}\dotsm i_{k_2}}, \dotsc ,p_{i_{1_n}\dotsm i_{k_n}})}L$ which is the $R_{n+1}^\lambda$-module 
consisting of $n$ sets of non-intersecting strands, labeled by the 
order given by the $p_{i_{1_r}\dotsm i_{k_r}}$, carrying no dots, and ending in the sequence 
determined by the idempotent $e(p_{i_{1_1}\dotsm i_{k_1}},p_{i_{1_2}\dotsm i_{k_2}},\dotsc ,p_{i_{1_n}\dotsm i_{k_n}})$.  
To keep the notation simple from now on we write 
$\pi_s$ instead of $\pi_{i_{1_n}\dotsm i_{k_n}}\dotsm\pi_{i_{1_n}\dotsm i_{k_2}}\pi_{i_{1_1}\dotsm i_{k_1}}$ 
and $e(s)$ instead of 
$e(p_{i_{1_1}\dotsm i_{k_1}},p_{i_{1_2}\dotsm i_{k_2}}, \dotsc ,p_{i_{1_n}\dotsm i_{k_n}})$. 

\begin{defn}\label{def:quotGT}
%Let $\ker(\Pi)$ be the sum over all $s$ in $\cS(\lambda)$ of $\ker(\pi_s)$. 
The algebra $\widecheck{R}_{n+1}^\lambda$ is defined as the quotient
\begin{equation}
\widecheck{R}_{n+1}^\lambda = 
\bigoplus\limits_{s\in\cS(\lambda)} R_{n+1}^\lambda/\ker(\pi_s). 
\end{equation}
\end{defn}

As in the case of the algebras $\widetilde{R}_{n+1}^\lambda$ of Subsection~\ref{ssec:catproj}
each algebra $R_{n+1}^\lambda/\ker(\pi_s)$ admit a presentation by diagrams consisting of groups of strands 
labeled by the entries $s(j)$ of the string $s$. 
Strands within the same group cannot cross among themselves and are labeled in the order given by the idempotent 
$e(p_{i_{1_j}\dotsm i_{k_j}})$ determined by $s(j)$.

\begin{lem}
We have $\Hom_{\widecheck{R}_{n+1}^\lambda-\amod}({}_{e(s)}L,{}_{e(s')}L)=0$ if $s\neq s'$. 
\end{lem}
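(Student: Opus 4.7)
The plan is to exploit the block decomposition of $\widecheck{R}_{n+1}^\lambda$. By definition $\widecheck{R}_{n+1}^\lambda = \bigoplus_{s\in\cS(\lambda)} R_{n+1}^\lambda/\ker(\pi_s)$ is a finite direct sum of algebras, so it carries a canonical system of pairwise orthogonal central idempotents $\{\epsilon_s\}_{s\in\cS(\lambda)}$, where $\epsilon_s$ is the unit of the $s$-th summand. These satisfy $\epsilon_s\epsilon_{s'} = \delta_{s,s'}\epsilon_s$ and $\sum_s \epsilon_s = 1$, so every $\widecheck{R}_{n+1}^\lambda$-module splits as a direct sum of its $\epsilon_s$-isotypic pieces indexed by $\cS(\lambda)$.

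First I would verify that, for each $s\in\cS(\lambda)$, the $\widecheck{R}_{n+1}^\lambda$-action on ${}_{e(s)}L$ is concentrated on the $s$-th summand. Indeed, ${}_{e(s)}L = \res^s(\Bbbk)$ with $\res^s = \res^\lambda_{\mu^n}\res^{\mu^n}_{\mu^{n-1}}\dotsm\res^{\mu^1}$ a composition of restriction-of-scalars functors along the successive surjections $\pi_{\mu^j,\mu^{j-1}}$. Unwinding this composition, the $R_{n+1}^\lambda$-action on ${}_{e(s)}L$ factors through the composite surjection $\pi_s$ onto the $s$-th summand of $\widecheck{R}_{n+1}^\lambda$. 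Consequently $\epsilon_s$ acts as the identity on ${}_{e(s)}L$ while $\epsilon_{s'}$ acts as zero whenever $s'\neq s$.

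The conclusion then follows from a one-line equivariance argument: given $s\neq s'$ and any $f\in\Hom_{\widecheck{R}_{n+1}^\lambda-\amod}({}_{e(s)}L,{}_{e(s')}L)$, for every $m\in {}_{e(s)}L$ we have
\[
f(m) = f(m\cdot\epsilon_s) = f(m)\cdot\epsilon_s = 0,
\]
since $\epsilon_s$ acts as zero on the target ${}_{e(s')}L$. Therefore $f=0$, as desired.

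The argument is essentially formal, and the only step that really warrants attention is the bookkeeping in the second paragraph, namely the verification that the action on ${}_{e(s)}L$ genuinely factors through the $s$-th summand of $\widecheck{R}_{n+1}^\lambda$ (and not merely through some larger quotient). But this is immediate from the iterated restriction-of-scalars definition of $\res^s$, so there is no serious obstacle.
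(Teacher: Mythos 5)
The paper states this lemma without proof, so there is nothing to compare against directly; your argument appears to be the intended one. It is correct: the definition $\widecheck{R}_{n+1}^\lambda = \bigoplus_{s\in\cS(\lambda)} R_{n+1}^\lambda/\ker(\pi_s)$ exhibits $\widecheck{R}_{n+1}^\lambda$ explicitly as a finite direct sum of algebras, and since ${}_{e(s)}L = \res^s(\Bbbk)$ is obtained by iterated restriction of scalars along the composite surjection $\pi_s$, its action factors through the single summand $R_{n+1}^\lambda/\ker(\pi_s)$, so the central idempotent $\epsilon_s$ acts as the identity on ${}_{e(s)}L$ and annihilates ${}_{e(s')}L$ for $s'\neq s$; the vanishing of the hom-space is then the usual formal consequence. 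The only thing worth making explicit in a writeup is that the $\widecheck{R}_{n+1}^\lambda$-module structure on ${}_{e(s)}L$ is precisely the one obtained by letting the $s$-th summand act through $\pi_s$ and all other summands act by zero, which is what makes the idempotent bookkeeping in your second paragraph legitimate.
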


\begin{lem}
Module ${}_{e(s)}L$ is projective indecomposable 
as a module over $\widecheck{R}_{n+1}^\lambda$. 
\end{lem}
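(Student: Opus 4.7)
The plan is to unpack the definition of $\widecheck{R}_{n+1}^\lambda$ and use the iterated structure of $\pi_s$. For projectivity I would first observe that $e(s)$ is an idempotent in $\widecheck{R}_{n+1}^\lambda$ sitting inside the summand $R_{n+1}^\lambda/\ker(\pi_s)$ and mapping to the unit there; hence ${}_{e(s)}L = e(s)\widecheck{R}_{n+1}^\lambda$ is a direct summand of the regular right module and is automatically projective.

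For indecomposability I would compute the endomorphism ring directly as
\[
\End_{\widecheck{R}_{n+1}^\lambda}({}_{e(s)}L) = e(s)\widecheck{R}_{n+1}^\lambda e(s).
\]
The previous lemma ensures that morphisms between summands for distinct patterns vanish, so this reduces to $e(s)\bigl(R_{n+1}^\lambda/\ker(\pi_s)\bigr)e(s)$. The key step is to identify $R_{n+1}^\lambda/\ker(\pi_s)$ with $\Bbbk$: iterating Lemma~\ref{lem:cycinc} along the pattern $s = (\mu^{(n+1)} = \lambda, \mu^{(n)}, \dotsc, \mu^{(1)})$ produces a chain of algebra surjections
\[
R_{n+1}^\lambda \twoheadrightarrow R_n^{\mu^{(n)}} \twoheadrightarrow \dotsb \twoheadrightarrow R_1^{\mu^{(1)}} \cong \Bbbk,
\]
whose composite is precisely $\pi_s$. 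Thus $R_{n+1}^\lambda/\ker(\pi_s) \cong \Bbbk$, and the endomorphism ring collapses to $\Bbbk$, a field, forcing indecomposability.

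The step requiring the most care will be confirming that $\pi_s(e(s))$ equals the unit of $\Bbbk$, which I expect to follow directly from the recursive construction of $e(s)$: at each stage of the chain the idempotent $e(p_{i_{1_j}\dotsm i_{k_j}},\ldots)$ factors precisely through the subalgebra $\widetilde{A}_{i_{1_j}\dotsm i_{k_j}}$ appearing in the proof of Lemma~\ref{lem:cycinc}, so it is carried to the identity idempotent in the next algebra of the chain. Everything else reduces to the direct-sum description of $\widecheck{R}_{n+1}^\lambda$ together with the base-case identity $R_1^{\mu^{(1)}} \cong \Bbbk$ referred to in the introduction.
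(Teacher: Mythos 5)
The paper states this lemma without proof, so your argument must be judged on its own terms; it cannot be compared to an author-supplied argument.

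The projectivity half is fine: $e(s)$ is an idempotent of $\widecheck{R}_{n+1}^\lambda$ sitting in the summand $R_{n+1}^\lambda/\ker(\pi_s)$, so ${}_{e(s)}L=e(s)\widecheck{R}_{n+1}^\lambda$ is a direct summand of the regular module. The reduction of the endomorphism algebra to $e(s)\bigl(R_{n+1}^\lambda/\ker(\pi_s)\bigr)e(s)$ is also essentially correct, though the Hom-vanishing lemma is not the cleanest justification; what really makes it work is that $e(s)$ is sent to zero in the summands $R_{n+1}^\lambda/\ker(\pi_{s'})$ for $s'\neq s$, so $e(s)\widecheck{R}_{n+1}^\lambda e(s)$ concentrates in the $s$-summand.

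The step that needs scrutiny is the identification $R_{n+1}^\lambda/\ker(\pi_s)\cong\Bbbk$. If one reads $\ker(\pi_s)$ literally, as the kernel of the composite surjection $R_{n+1}^\lambda\twoheadrightarrow R_1^{\mu^{(1)}}\cong\Bbbk$, this is automatic. But the text of the paper describes the quotient ``as in the case of the algebras $\widetilde{R}_{n+1}^\lambda$'' and says it admits a presentation by diagrams consisting of groups of strands, with strands within a group forbidden from crossing. In the $\widetilde{R}$ story of Subsection~\ref{ssec:catproj} the relevant ideal is $J_{i_1\dotsm i_k}$, which is \emph{strictly smaller} than the kernel of $\pi_{i_1\dotsm i_k}$, so the quotient $\widetilde{R}$ is strictly larger than the target cyclotomic algebra. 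By that analogy, $\ker(\pi_s)$ in Definition~\ref{def:quotGT} is meant to denote the ideal generated by dots and within-group crossings, and then $R_{n+1}^\lambda/\ker(\pi_s)$ is genuinely larger than $\Bbbk$: crossings between strands of \emph{different} groups survive and give diagrams from $e(s)$ to other idempotents. Under this reading your step $R_{n+1}^\lambda/\ker(\pi_s)\cong\Bbbk$ is false.

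What you actually need is the weaker statement that the truncation $e(s)\bigl(R_{n+1}^\lambda/\ker(\pi_s)\bigr)e(s)$ is $\Bbbk$, and this can be argued directly from the diagrammatics. In this quotient \emph{every} strand lies in some group, so no dots and no within-group crossings occur. An endomorphism of $e(s)$ returns every strand to its starting position, hence inter-group crossings must occur in cancelling pairs; by~\eqref{eq:R2} a double crossing of same-label strands is zero, a double crossing of adjacent-label strands produces dots (zero in the quotient), and a double crossing of distant-label strands is the identity, while the correction term in~\eqref{eq:R3} again involves dots. So every endomorphism of $e(s)$ collapses to a scalar multiple of the identity, giving the local endomorphism ring and hence indecomposability. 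That is the argument you should supply in place of the too-strong claim about the full quotient algebra. Your observation that $\pi_s(e(s))\neq 0$ needs verifying is well placed: without it ${}_{e(s)}L$ could be zero, and the statement would fail trivially.
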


Denote by $\widecheck{\psi}: R_{n+1}^\lambda\to \widecheck{R}_{n+1}^\lambda $ the projection map. 
The action of functors $\{F_i^\lambda, E_i^\lambda\}_{i\in\{1,\dotsc n\}}$ on 
the collection of all the modules ${}_{e(s)}L$ in 
$\widecheck{R}_{n+1}^\lambda-\amod$ inherited from the one on 
$R_{n+1}^\lambda-\amod$ 
does not commute with the quotient functor  
$\widecheck{\Psi}\colon R_{n+1}^\lambda-\amod\to\widecheck{R}_{n+1}^\lambda-\amod$ 
induced by $\widecheck{\psi}$. 
Nevertheless 
if we change the action of 
the $F_i^\lambda$s and of the $E_i^\lambda$s on 
$\widecheck{R}_{n+1}^\lambda-\amod$
we obtain a commutative diagram 
\begin{equation}\label{eq:cdiagGT}
\raisebox{9.5mm}{
\xymatrix@C=18mm{
R_{n+1}^\lambda-\amod \ar[d]^{\widecheck{\Psi}}\ar[r]^{F_i^{\lambda} , E_i^{\lambda}} 
&  
R_{n+1}^\lambda-\amod \ar[d]^{\widecheck{\Psi}} 
\\
\widecheck{R}_{n+1}^\lambda-\amod \ar[r]^{\widecheck{F}_i^{\lambda} , \widecheck{E}_i^{\lambda}} 
& 
\widecheck{R}_{n+1}^\lambda-\amod 
} }
\end{equation}
To this end we define 
\begin{equation*}
\widecheck{F}_j^{\lambda}, \widecheck{E}_j^{\lambda}
\colon
\widecheck{R}_{n+1}^{\lambda}-\amod\to \widecheck{R}_{n+1}^{\lambda}-\amod
\end{equation*}
as the composite functors
\begin{equation*}
\widecheck{F}_j^{\lambda} = 
\bigoplus\limits_{s_1,s_2,s_3\in\,\cS(\lambda)}
\widetilde{\res}^\lambda_{s_3}\widetilde{\Pi}^\lambda_{s_2}F_j^\lambda\widetilde{\res}^\lambda_{s_1}\Pi^\lambda_{s_1}
\mspace{55mu}
\widecheck{E}_j^{\lambda} = 
\bigoplus\limits_{s_1,s_2,s_3\in\,\cS(\lambda)}
\widetilde{\res}^\lambda_{s_3}\widetilde{\Pi}^\lambda_{s_2}E_j^\lambda\widetilde{\res}^\lambda_{s_1}\Pi^\lambda_{s_1} .
\end{equation*}

\begin{lem}
Functors $\widecheck{F}_j^{\lambda}$ and $\widecheck{E}_j^{\lambda}$ are biadjoint and take projectives to projectives.
\end{lem}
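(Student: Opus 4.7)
The plan is to deduce both properties from the corresponding properties of each constituent functor appearing in the definitions of $\widecheck{F}_j^{\lambda}$ and $\widecheck{E}_j^{\lambda}$, since biadjointness and preservation of projectives are both stable under composition and direct sums.

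For biadjointness, I would invoke biadjointness of each piece separately. The pair $\{F_j^\lambda, E_j^\lambda\}$ is biadjoint as part of the categorical $\sll$-action on $R_{n+1}^\lambda-\amod$ (see~\eqref{eq:Uaction}). The pair $\{\Pi^\lambda, \res^\lambda\}$ is biadjoint by Lemma~\ref{lem:BR-biadj}, so in particular each component pair $\{\Pi^\lambda_{s_1}, \widetilde{\res}^\lambda_{s_1}\}$ is biadjoint. Finally, the pair $\{\widetilde{\Pi}^\lambda, \widetilde{\res}^\lambda\}$ is biadjoint because these functors are mutually inverse equivalences, as established at the end of Subsection~\ref{ssec:catproj}. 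Since a composition of biadjoint pairs is biadjoint (with the adjoint obtained by reversing the order of composition and replacing each factor by its adjoint), the adjoint of the composite $\widetilde{\res}^\lambda_{s_3}\widetilde{\Pi}^\lambda_{s_2}F_j^\lambda\widetilde{\res}^\lambda_{s_1}\Pi^\lambda_{s_1}$ is of the form $\widetilde{\res}^\lambda_{s_1}\widetilde{\Pi}^\lambda_{s_1}E_j^\lambda\widetilde{\res}^\lambda_{s_2}\Pi^\lambda_{s_3}$. After relabeling the dummy summation indices $(s_1,s_2,s_3)\mapsto (s_3,s_2,s_1)$, the resulting direct sum matches exactly the defining formula of $\widecheck{E}_j^{\lambda}$, giving the claim.

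For preservation of projectives, I would check each constituent separately. The functors $F_j^\lambda$ and $E_j^\lambda$ send projectives to projectives by exactness and biadjointness of~\eqref{eq:Uaction}; explicitly $F_j^\lambda$ is an induction and its biadjoint $E_j^\lambda$ is a shifted restriction along the same algebra inclusion, both well known to preserve projectives. The components $\Pi^\lambda_{i_1\dotsm i_k}$ are noted to take projectives to projectives right after their definition in Subsection~\ref{ssec:catbranching}, and the same applies to $\res^\lambda_k$, as used in the discussion surrounding~\eqref{eq:quotQ}. The tilded versions $\widetilde{\Pi}^\lambda$ and $\widetilde{\res}^\lambda$ inherit the property from the equivalence of categories between $\widetilde{R}^\lambda_{n+1}-\amod$ and $\bigoplus_{\xi_{i_1\dotsm i_k}\in\cD_\lambda} R^{\xi_{i_1\dotsm i_k}(\lambda)}_n-\amod$. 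The composite and direct sum therefore preserve projectives, proving the claim for $\widecheck{F}_j^\lambda$ and $\widecheck{E}_j^\lambda$.

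The step I expect to require the most care is the bookkeeping in the biadjointness argument. While in principle ``composition of biadjoints is biadjoint'' is formal, one needs to verify that the permutation of summation labels actually matches the defining formula of $\widecheck{E}_j^\lambda$ on the nose, and that no non-trivial identifications between the indices $s_1,s_2,s_3$ are needed for individual summands to be well defined. This should reduce to checking that the middle factors $\widetilde{\Pi}^\lambda_{s_2}\circ (-)\circ\widetilde{\res}^\lambda_{s_1}$ contribute nontrivially only for compatible choices of $(s_1,s_2)$, so that the sum over all triples correctly encodes the partner functor after reindexing.
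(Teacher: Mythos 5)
The paper states this lemma without proof, so there is no in-text argument to compare against; I will evaluate your attempt on its own merits.

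Your overall strategy (biadjointness and projectivity are stable under composition and direct sums; reduce to each constituent) is the natural one, and the projectivity half is essentially sound once one is careful that it is the \emph{tilded} restriction $\widetilde{\res}^\lambda$ (target the quotient category $\widetilde{R}^\lambda_{n+1}-\amod$) that preserves projectives --- the paper is explicit that $\res^\lambda_k$ to the untilded category does \emph{not}, so your attribution ``the same applies to $\res^\lambda_k$'' is misleading, even if the conclusion survives because the defining formula actually uses $\widetilde{\res}^\lambda$.

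The biadjointness half, however, has a genuine gap. First, the biadjoint of the summand $\widetilde{\res}^\lambda_{s_3}\widetilde{\Pi}^\lambda_{s_2}F_j^\lambda\widetilde{\res}^\lambda_{s_1}\Pi^\lambda_{s_1}$ is, applying reversal factor by factor with $(\Pi^\lambda_{s_1})^\dagger=\res^\lambda_{s_1}$, $(\widetilde{\res}^\lambda)^\dagger=\widetilde{\Pi}^\lambda$, $(\widetilde{\Pi}^\lambda)^\dagger=\widetilde{\res}^\lambda$, $(F_j^\lambda)^\dagger=E_j^\lambda$, equal to
\[
\res^\lambda_{s_1}\,\widetilde{\Pi}^\lambda_{s_1}\,E_j^\lambda\,\widetilde{\res}^\lambda_{s_2}\,\widetilde{\Pi}^\lambda_{s_3},
\]
which differs from what you wrote (you have $\widetilde{\res}^\lambda_{s_1}$ and $\Pi^\lambda_{s_3}$ at the ends, i.e.\ you swapped the tilded and untilded functors). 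Second, even after correcting this, the relabeling $(s_1,s_2,s_3)\mapsto(s_3,s_2,s_1)$ yields $\res^\lambda_{s_3}\widetilde{\Pi}^\lambda_{s_3}E_j^\lambda\widetilde{\res}^\lambda_{s_2}\widetilde{\Pi}^\lambda_{s_1}$, which does not coincide summand-by-summand with the defining expression $\widetilde{\res}^\lambda_{s_3}\widetilde{\Pi}^\lambda_{s_2}E_j^\lambda\widetilde{\res}^\lambda_{s_1}\Pi^\lambda_{s_1}$: the index that appears twice sits at the outer pair (positions 1--2) in your relabeled adjoint but at the inner pair (positions 4--5) in $\widecheck{E}^\lambda_j$, and the tilded/untilded decoration does not match either. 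You flag this as ``requiring care'' but do not resolve it, and as written the claim that the two direct sums ``match exactly'' is false. To close the argument one would have to exploit the orthogonality of blocks, namely that $\widetilde{\Pi}^\lambda_{s}\widetilde{\res}^\lambda_{s'}=0$ for $s\neq s'$ and $\widetilde{\res}^\lambda_{s}\widetilde{\Pi}^\lambda_{s}$ is a block projector, so that both sums collapse onto the same diagonal set of nonzero summands; this is a real additional step, not bookkeeping.
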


\begin{prop}
The collection of endofunctors $\{\widetilde{F}_i^\lambda, \widetilde{E}_i^\lambda\}_{i\in\{1,\dotsc n\}}$ 
defines a categorical $\sll$-action on $\widecheck{R}_{n+1}^\lambda-\amod$.
%\end{prop}
%\begin{prop}
The functor $\widecheck{\Psi}$ intertwines the categorical $\sll$-action. 
\end{prop}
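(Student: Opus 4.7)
The plan is to transport the categorical $\sll$-action from $R_{n+1}^\lambda-\amod$, where it is already known via the functors $\{F_j^\lambda, E_j^\lambda\}$ of~\eqref{eq:Uaction}, down to $\widecheck{R}_{n+1}^\lambda-\amod$ using the nested recursion of projection and restriction functors. The central tool is the ``categorical resolution of the identity'' from Subsection~\ref{ssec:catproj}: the isomorphisms $\widetilde{\Pi}^\lambda_k\widetilde{\res}^\lambda_k \cong \operatorname{id}$ and $\widetilde{\res}^\lambda_k\widetilde{\Pi}^\lambda_k \cong \operatorname{id}$ that equivalence $\widetilde{R}_{n+1}^\lambda-\amod$ with $\bigoplus_{\xi\in\cD_\lambda}R_n^{\xi(\lambda)}-\amod$, iterated down the full Gelfand-Tsetlin recursion. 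At the bottom level this yields the orthogonality $\widetilde{\Pi}^\lambda_{s'}\widetilde{\res}^\lambda_s \cong \delta_{s,s'}\operatorname{id}$ for Gelfand-Tsetlin patterns $s,s'\in\cS(\lambda)$, which will collapse the triple direct sums in the definitions of $\widecheck{F}_j^\lambda$ and $\widecheck{E}_j^\lambda$.

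Biadjointness and preservation of projectives is the content of the preceding lemma, so what remains is to verify the categorical $\sll$-relations. To check the Chevalley commutator relation, I would expand
\begin{equation*}
\widecheck{E}_i^\lambda \widecheck{F}_j^\lambda \cong \bigoplus_{s_1,\dotsc, s_6}
\widetilde{\res}^\lambda_{s_6}\widetilde{\Pi}^\lambda_{s_5}E_i^\lambda\widetilde{\res}^\lambda_{s_4}\Pi^\lambda_{s_4}
\widetilde{\res}^\lambda_{s_3}\widetilde{\Pi}^\lambda_{s_2}F_j^\lambda\widetilde{\res}^\lambda_{s_1}\Pi^\lambda_{s_1},
\end{equation*}
then apply the orthogonality of the patterns to collapse the inner composite $\Pi^\lambda_{s_4}\widetilde{\res}^\lambda_{s_3}\widetilde{\Pi}^\lambda_{s_2}$ (forcing equalities among $s_2,s_3,s_4$), reducing to an expression of the shape
$\bigoplus_{s,s'}\widetilde{\res}^\lambda_{s'}\widetilde{\Pi}^\lambda_{s}(E_i^\lambda F_j^\lambda)\widetilde{\res}^\lambda_s\Pi^\lambda_s$. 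Comparing with the analogous expansion of $\widecheck{F}_j^\lambda \widecheck{E}_i^\lambda$ and invoking the known relation $[E_i^\lambda, F_j^\lambda]$ on $R_{n+1}^\lambda-\amod$ yields the desired relation on $\widecheck{R}_{n+1}^\lambda-\amod$. The quantum Serre relations and the $|i-j|\geq 2$ commutations follow by the same expansion-and-collapse scheme.

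For the intertwining claim, the commutative square~\eqref{eq:cdiagGT} at the level of functors is precisely what must be verified. For $M\in R_{n+1}^\lambda-\amod$, expanding
\begin{equation*}
\widecheck{F}_j^\lambda\widecheck{\Psi}(M) \cong \bigoplus_{s_1,s_2,s_3}\widetilde{\res}^\lambda_{s_3}\widetilde{\Pi}^\lambda_{s_2}F_j^\lambda\widetilde{\res}^\lambda_{s_1}\Pi^\lambda_{s_1}\widecheck{\Psi}(M)
\end{equation*}
and using that $\widecheck{\Psi}$ factors (by Definition~\ref{def:quotGT}) through the iterated quotient identified with $\bigoplus_s\widetilde{\res}^\lambda_s\widetilde{\Pi}^\lambda_s$, one obtains $\widecheck{\Psi}F_j^\lambda(M)$ by the same orthogonality collapse.

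The main obstacle is the fully nested bookkeeping. The orthogonality relations were stated in Subsection~\ref{ssec:catproj} only for the top level, i.e.\ the restriction from $R_{n+1}^\lambda$ to $R_n^{\xi(\lambda)}$; here we need them level-by-level down the entire recursion, together with compatibility between consecutive levels. One must verify that $\widetilde{\Pi}^\lambda_{s'}\widetilde{\res}^\lambda_s$ indeed realises $\delta_{s,s'}\operatorname{id}$ as a functor between the correct intermediate nested quotients, and that the collapse of the multiple direct sums produces no extraneous multiplicity and no unexpected weight shift that would spoil the $\sll$-commutator.
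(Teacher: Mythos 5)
The paper gives no proof of this proposition — it is asserted after the definitions of $\widecheck{F}_j^\lambda$, $\widecheck{E}_j^\lambda$ and the commutative square~\eqref{eq:cdiagGT}, with no argument at all (note also that the statement itself has a typo, $\widetilde{F}_i^\lambda$, $\widetilde{E}_i^\lambda$ for $\widecheck{F}_i^\lambda$, $\widecheck{E}_i^\lambda$, which you have silently corrected). So there is nothing to compare against; I can only review your proposal on its own terms.

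Your outline captures what the author evidently intends: transport the $\sll$-relations from $R_{n+1}^\lambda-\amod$ along the nested $\widetilde{\Pi}/\widetilde{\res}$ machinery and collapse the direct sums by orthogonality. The intertwining direction likewise reduces to identifying $\widecheck{\Psi}$ with $\bigoplus_s\widetilde{\res}^\lambda_s\Pi^\lambda_s$. Your proof of biadjointness and projective-preservation by citation of the preceding lemma is fine.

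The genuine gap is that your ``collapse'' step does not follow from the stated orthogonality alone, and you half-acknowledge this but do not resolve it. When you expand $\widecheck{E}_i^\lambda\widecheck{F}_j^\lambda$ into six indices, orthogonality of the form $\Pi^\lambda_{s_4}\widetilde{\res}^\lambda_{s_3}\cong\delta_{s_3,s_4}\mathrm{id}_{\Bbbk}$ does eliminate the middle index pair, but it does \emph{not} force $s_2=s_3$: the composite $\widetilde{\res}^\lambda_{s_3}\widetilde{\Pi}^\lambda_{s_2}$ with $s_2\neq s_3$ is a genuine, generally nonzero functor transporting the $s_2$-block of $\widecheck{R}_{n+1}^\lambda$ to the $s_3$-block. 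As written, the definition of $\widecheck{F}_j^\lambda$ in the paper already contains these off-diagonal $(s_2,s_3)$-terms, and your expansion inherits them; they will not cancel automatically against the analogous terms of $\widecheck{F}_j^\lambda\widecheck{E}_i^\lambda$ without a further argument showing they vanish or pair up. Either the paper intends $s_3=s_2$ in the definition (equivalently $\widecheck{F}_j^\lambda=\widecheck{\Psi}F_j^\lambda\widecheck{\iota}$ for $\widecheck{\iota}$ the section $\bigoplus_s\widetilde{\res}^\lambda_s\Pi^\lambda_s$), in which case the transport is clean and your argument goes through once the nested orthogonality is established level by level, or one has to prove the off-diagonal terms are zero. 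Your proposal should pin down which of these is the case before the Serre and commutator relations can be said to descend; presently the argument stops precisely at the point where this choice matters.

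A second, smaller point: you invoke the nested orthogonality down the full Gelfand--Tsetlin recursion as needed, but the paper only records it at the top level (the isomorphisms $\widetilde{\Pi}^\lambda_k\widetilde{\res}^\lambda_k\cong\mathrm{id}$ and the vanishing of $\Pi^\lambda_{\ell_1\dotsm\ell_r}({}_{i_1\dotsm i_k,\und{j}}L)$ for mismatched indices in Subsection~\ref{ssec:catproj}). Passing this through the iterated quotients of Definition~\ref{def:quotGT} requires an induction on the level and a compatibility check between consecutive levels, which you flag but do not carry out. This is the part of the proof that actually has content; the rest is formal once it is in place.
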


\begin{thm}\label{thm:GTcat}
There is an isomorphism of $\sll$ representations 
\begin{equation*}
K_0(\widecheck{R}_{n+1}^\lambda)\xra{\ \ \cong\ \ } V_\lambda^{\sll} 
\end{equation*} 
taking the projective ${}_{e(s)}L$ to the Gelfand-Tsetlin basis element $\ket{s}$. 
\end{thm}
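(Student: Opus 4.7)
The plan is to leverage the proposition stated just before the theorem: the quotient functor $\widecheck{\Psi}\colon R_{n+1}^\lambda-\amod\to\widecheck{R}_{n+1}^\lambda-\amod$ intertwines the categorical $\sll$-action. Consequently, passing to Grothendieck groups yields a morphism
\begin{equation*}
K_0(\widecheck{\Psi})\colon K_0(R_{n+1}^\lambda) \to K_0(\widecheck{R}_{n+1}^\lambda)
\end{equation*}
of $\sll$-representations. By Theorem~\ref{thm:BK} (or, equivalently, by the elementary proof given in Subsection~\ref{ssec:easyBK}), the source is isomorphic to the irreducible $V_\lambda^{\sll}$. Since every nonzero morphism out of an irreducible is injective, it suffices to produce a single $s\in\cS(\lambda)$ with $[{}_{e(s)}L]\neq 0$ in $K_0(\widecheck{R}_{n+1}^\lambda)$; but the two lemmas preceding the theorem guarantee that each ${}_{e(s)}L$ is a nonzero indecomposable projective, so its class is nonzero. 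Hence $K_0(\widecheck{\Psi})$ is injective.

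To conclude that it is an isomorphism, I would verify surjectivity by a dimension count. The Hom-orthogonality lemma ($\Hom({}_{e(s)}L,{}_{e(s')}L)=0$ for $s\neq s'$), together with indecomposability, implies that the classes $\{[{}_{e(s)}L]\}_{s\in\cS(\lambda)}$ are $\bQ(q)$-linearly independent in $K_0(\widecheck{R}_{n+1}^\lambda)$. Since $|\cS(\lambda)|=\dim_{\bQ(q)}V_\lambda^{\sll}$ (the Gelfand-Tsetlin basis is indexed by $\cS(\lambda)$), linear independence of $|\cS(\lambda)|$ elements in the quotient of a space of that dimension forces both that the $[{}_{e(s)}L]$ form a basis and that $K_0(\widecheck{\Psi})$ is an isomorphism.

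It remains to identify $[{}_{e(s)}L]$ with $\ket{s}$. The module ${}_{e(s)}L$ was manufactured, by construction, as $\res^{\lambda}_{\mu^n}\res^{\mu^n}_{\mu^{n-1}}\dotsm\res^{\mu^1}(\Bbbk)$ for the sequence of partitions $s=(\lambda,\mu^n,\dotsc,\mu^1)$. On the decategorified side, Theorem~\ref{thm:branchrules} says that the iterated restriction functors descend precisely to the iterated projections implementing the branching rule $V^\sll_\lambda\cong\bigoplus_{\mu\in\tau(\lambda)}V^\sln_\mu$. Iterating this identification $n$ times and following the one-dimensional summand cut out by $s$ gives exactly the subspace that the classical construction of Subsection~\ref{ssec:GT} labels $\ket{s}$. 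The recovery of the $\sll$-action from Subsection~\ref{sec:recsll} ensures that this match is compatible not merely with the projections but with all of $\{E_i^\xi,F_i^\xi\}$, so that $[{}_{e(s)}L]\mapsto\ket{s}$ is the claimed isomorphism.

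The main obstacle I anticipate is the last identification: being careful that the composite $\res^{\lambda}_{\mu^n}\dotsm\res^{\mu^1}$, defined one cyclotomic quotient at a time via the surjections $\pi_{\mu^j,\mu^{j-1}}$ of Lemma~\ref{lem:cycinc}, really matches the \emph{same} sequence of one-dimensional subquotients that appear in the iterated branching decomposition at the level of $V_\lambda^\sll$. The commutative square~\eqref{eq:cdiagGT} together with the fact that $K_0(\Pi^\lambda)$ is a surjection of $\sll$-representations (rather than merely of $\sln$-representations, by the corollary in Subsection~\ref{sec:recsll}) is what makes this bookkeeping go through; the remaining steps are essentially formal.
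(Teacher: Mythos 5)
Your overall strategy — pass to $K_0$, use that the source is the irreducible $V_\lambda^\sll$ via Theorem~\ref{thm:BK}, and invoke Schur's lemma — is the same as the paper's, and the conclusion is correct, but the middle steps are organized differently and one of them is presented in a way that does not quite do what you claim. The paper notes that $\widecheck{\psi}$ is a \emph{surjection} of graded algebras, so $K_0(\widecheck{\psi})$ is automatically surjective; combined with Schur's lemma, one only has to check that the target is not the zero module, which the paper does by running a chain of induced surjections $K_0(\widecheck{R}_{n+1}^\lambda)\to\dotsm\to K_0(\Bbbk)\neq 0$. You instead argue for injectivity by showing the map is nonzero (via $[{}_{e(s)}L]\neq 0$ from the indecomposable-projective lemma), which is fine, and is arguably a slightly more direct way to see non-triviality than the paper's chain. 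Where your write-up slips is in the stated purpose of the linear-independence count: you say you ``verify surjectivity by a dimension count,'' but linear independence of $\lvert\cS(\lambda)\rvert$ classes only gives the \emph{lower} bound $\dim K_0(\widecheck{R}_{n+1}^\lambda)\geq\lvert\cS(\lambda)\rvert$, which is the same direction your injectivity argument already supplied. The upper bound, and hence surjectivity, is hidden in your phrase ``in the quotient of a space of that dimension,'' i.e.\ you are silently invoking exactly the surjectivity of $K_0(\widecheck{\psi})$ that comes from $\widecheck{\psi}$ being a surjection of algebras. You should state that explicitly rather than attribute surjectivity to the dimension count; once you do, the linear-independence step becomes a pleasant cross-check but is no longer load-bearing, since surjectivity plus Schur already gives the isomorphism. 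Finally, for the identification $[{}_{e(s)}L]\mapsto\ket{s}$, you track the module through iterated restriction functors and match it with the iterated branching on the decategorified side; the paper instead argues more abstractly that every indecomposable projective over $R_{n+1}^\lambda$ splits under $\widecheck{\Psi}$ into indecomposables labelled by elements of $\cS(\lambda)$, and that the counts of indecomposables agree. Your version is more conceptually transparent but needs some care in checking that $\res^s$, built one step at a time from the surjections of Lemma~\ref{lem:cycinc}, really singles out the same one-dimensional summand as the classical recursion of Subsection~\ref{ssec:GT} — the commutative square~\eqref{eq:cdiagGT} plus the corollary in Subsection~\ref{sec:recsll} is indeed the right way to close this, as you anticipate.
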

\begin{proof}
The surjection $\widecheck{\Psi}$ of algebras induces a surjective map between the Grothendieck groups
\begin{equation*}
K_0(\widecheck{\psi})\colon 
K_0(R_{n+1}^\lambda)\xra{\ \ \cong\ \ } K_0(\widecheck{R}_{n+1}^\lambda)
\end{equation*} 
intertwining the action of $\sll$ 
which is an isomorphism if $K_0(\widecheck{R}_{n+1}^\lambda)$ is not zero, by Schur's lemma. 
To prove it is not zero we use the categorical branching rule to reduce  the size of the category
$\widecheck{R}_{n+1}^\lambda-\prmod$ 
recursively until we get something with nonzero $K_0$. 
Chose a string $s\in\cS(\lambda)$. 
Each surjection 
$\pi_{i_1\dotsm i_k}\colon R_{n+1}^\lambda(k\alpha_n)\to R_{n}^{\xi_{i_1\dotsm i_k}(\lambda)}$ 
from Lemma~\ref{lem:cycinc} induces a surjection 
$\widecheck{\pi}_{i_1\dotsm i_k}\colon\widecheck{R}_{n+1}^\lambda(k\alpha_n)\to\widecheck{R}_{n}^{\xi_{i_1\dotsm i_k}(\lambda)}$ 
which in turn results in a map 
$K_0(\widecheck{\pi}_{i_1\dotsm i_k})\colon K_0(\widecheck{R}_{n+1}^\lambda(k\alpha_n))\to K_0(\widecheck{R}_{n}^{\xi_{i_1\dotsm i_k}(\lambda)})$ 
which is surjective.  
Continuing recursively we end up with a chain of surjections  
\begin{equation*}
K_0(\widecheck{R}_{n+1}^\lambda(k\alpha_n))\to \dotsm \to K_0(\widecheck{R}_{1}^{\mu^1})=K_0(\Bbbk)\neq 0  
\end{equation*}
which implies that 
$K_0(\widecheck{R}_{n+1}^\lambda)=\bigoplus_{k\geq 0}K_0(\widecheck{R}_{n+1}^\lambda(k\alpha_n))$ 
is nonzero. 

The second claim follows from the fact that every indecomposable  
in $R_{n+1}^\lambda-\prmod$ splits under $\widecheck{\Psi}$ into a direct sum of 
indecomposables in $\widecheck{R}_{n+1}^\lambda-\prmod$, each one labeled by an element of  $\cS(\lambda)$ 
together with the fact that the number of projective indecomposables is the same in both categories 
and the already established result that the map $K_0(\widecheck{\psi})$  
is an isomorphism. 
\end{proof}

\medskip

The results above allow us to give a  presentation of the category 
$R_{n+1}^\lambda-\prmod$ in terms of the Gelfand-Tsetlin basis using  
the idempotents $e(s)$.

\begin{prop}
Every object in $R_{n+1}^\lambda-\prmod$ is isomorphic to a direct summand of 
some ${}_{e(s)}P\{ \varepsilon_s \}$, for some $s\in\cS(\lambda)$ and some shift $\varepsilon_s$. 
\end{prop}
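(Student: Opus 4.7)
The plan is to use graded Krull--Schmidt together with Theorem~\ref{thm:GTcat} to identify a complete list of indecomposable projectives in $R_{n+1}^\lambda-\prmod$ among the modules ${}_{e(s)}P$, each up to grading shift. It is enough to prove the statement for indecomposable objects, the general case then following by additive decomposition.

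\emph{Step 1 (counting indecomposables).} Since $R_{n+1}^\lambda$ is a finite-dimensional graded Frobenius algebra by Lemma~\ref{lem:frobKLR}, the category $R_{n+1}^\lambda-\prmod$ has the graded Krull--Schmidt property: every object decomposes, uniquely up to isomorphism and permutation of summands, as a finite direct sum of indecomposables (each determined up to grading shift), and the number of isoclasses of indecomposables up to shift equals the rank of $K_0(R_{n+1}^\lambda)$ over $\bZ[q,q^{-1}]$. By Theorem~\ref{thm:BK}, this rank is $d_\lambda = \dim_{\bQ(q)} V^{\sll}_\lambda = |\cS(\lambda)|$.

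\emph{Step 2 (producing an indecomposable summand of ${}_{e(s)}P$).} Fix $s \in \cS(\lambda)$. From the description of $\widecheck{R}_{n+1}^\lambda$ after Definition~\ref{def:quotGT}, the idempotent $e(s)$ projects non-trivially onto the summand $R_{n+1}^\lambda/\ker(\pi_s)$ of $\widecheck{R}_{n+1}^\lambda$, and $\widecheck{\Psi}({}_{e(s)}P)$ contains ${}_{e(s)}L$ as a direct summand. Because ${}_{e(s)}L$ is an indecomposable projective in $\widecheck{R}_{n+1}^\lambda-\prmod$ (one of the two lemmas stated just before~\eqref{eq:cdiagGT}), Krull--Schmidt applied inside $\widecheck{R}_{n+1}^\lambda-\prmod$, together with the additivity of $\widecheck{\Psi}$, yields an indecomposable direct summand $P_s$ of ${}_{e(s)}P$ whose image under $\widecheck{\Psi}$ has ${}_{e(s)}L$ as a summand, up to a grading shift $\varepsilon_s$.

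\emph{Step 3 (distinguishing the $P_s$).} For $s \ne s'$ the projectives ${}_{e(s)}L$ and ${}_{e(s')}L$ are supported on distinct summands of $\widecheck{R}_{n+1}^\lambda$ and are not isomorphic, even after any grading shift, since under the isomorphism of Theorem~\ref{thm:GTcat} they give the linearly independent Gelfand--Tsetlin basis vectors $\ket{s}$ and $\ket{s'}$. Consequently the $P_s$, for $s \in \cS(\lambda)$, are pairwise non-isomorphic up to shift.

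\emph{Step 4 (conclusion).} Steps 2--3 produce $|\cS(\lambda)| = d_\lambda$ pairwise non-isomorphic indecomposable projectives $P_s$ in $R_{n+1}^\lambda-\prmod$; by Step 1 this exhausts the list of indecomposables up to shift. Every indecomposable of $R_{n+1}^\lambda-\prmod$ is therefore isomorphic to some $P_s\{\varepsilon_s\}$, and by construction $P_s$ is a direct summand of ${}_{e(s)}P$. The general statement then follows from graded Krull--Schmidt.

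\emph{Main obstacle.} The only delicate point is Step~2: verifying that $\widecheck{\Psi}({}_{e(s)}P)$ really contains ${}_{e(s)}L$ as a direct summand. This is where one must carefully track the image of the idempotent $e(s)$ under $\widecheck{\psi}\colon R_{n+1}^\lambda\to\widecheck{R}_{n+1}^\lambda$ and its interaction with the block decomposition of $\widecheck{R}_{n+1}^\lambda$. Once this is checked, the remainder is a standard Krull--Schmidt dimension count.
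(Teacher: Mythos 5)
Your overall strategy (pass to the quotient $\widecheck{R}_{n+1}^\lambda$, count indecomposables against Gelfand--Tsetlin patterns, match them one per pattern with a summand of ${}_{e(s)}P$) is in the same spirit as the paper's, which instead argues that ${}_{e(s)}P$ is itself the projective cover of ${}_{e(s)}L$ and hence indecomposable. However, Step~3 contains a genuine gap. From ``${}_{e(s)}L\not\cong{}_{e(s')}L$'' you conclude ``consequently the $P_s$ are pairwise non-isomorphic'', but this inference does not follow from what you have established. All you know after Step~2 is that ${}_{e(s)}L$ occurs as \emph{one} direct summand of $\widecheck{\Psi}(P_s)$; an isomorphism $P_s\cong P_{s'}\{a\}$ would only tell you that $\widecheck{\Psi}(P_s)$ contains both ${}_{e(s)}L$ and ${}_{e(s')}L\{a\}$ among its summands, which is perfectly consistent with the ${}_{e(s)}L$ being pairwise non-isomorphic. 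Without an argument that $\widecheck{\Psi}(P_s)$ is \emph{equal} to ${}_{e(s)}L$, Step~4's count is unjustified.

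The fix uses ingredients you already have in hand. By your own Step~1 count, $R_{n+1}^\lambda$ has exactly $d_\lambda$ indecomposable projectives up to shift (Theorem~\ref{thm:BK}); by the two lemmas preceding~\eqref{eq:cdiagGT}, $\widecheck{R}_{n+1}^\lambda$ has exactly $d_\lambda$ as well, namely the ${}_{e(s)}L$. A surjection $\widecheck\psi\colon R_{n+1}^\lambda\to\widecheck{R}_{n+1}^\lambda$ of finite-dimensional graded algebras whose source and target have the same set of graded simple modules (equivalently, the same number of indecomposable projectives) has $\ker\widecheck\psi$ contained in the graded Jacobson radical; then $\widecheck\psi$ carries primitive idempotents to primitive idempotents, and $\widecheck{\Psi}$ restricts to a bijection on isomorphism classes of indecomposable projectives up to shift. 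In particular $\widecheck{\Psi}(P_s)\cong{}_{e(s)}L$ on the nose, and only then does ${}_{e(s)}L\not\cong{}_{e(s')}L$ force $P_s\not\cong P_{s'}$. With this insertion Steps~1, 2 and~4 go through. One more remark: you flag Step~2 as the delicate point, but it is actually immediate from Definition~\ref{def:quotGT}, since $\widecheck{\Psi}({}_{e(s)}P)\cong\widecheck\psi(e(s))\widecheck{R}_{n+1}^\lambda\cong\bigoplus_{s'}e(s)\bigl(R_{n+1}^\lambda/\ker\pi_{s'}\bigr)$ and the $s'=s$ summand is precisely ${}_{e(s)}L$; the grading-shift qualifier in Step~2 is also superfluous, as ${}_{e(s)}L$ appears in degree zero.
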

\begin{proof}
Every object in $\widecheck{R}_{n+1}^\lambda-\prmod$ is a quotient of an object in 
${R}_{n+1}^\lambda-\prmod$. 
An inductive argument, starting with the modules ${}_{i_1\dotsm i_k,\und{j}}L$ of Subsection~\ref{ssec:catproj}, 
shows that each object ${}_{e(s)}L$ in $\widecheck{R}_{n+1}^\lambda-\prmod$ has a projective cover in 
${R}_{n+1}^\lambda-\prmod$ which coincides with ${}_{e(s)}P$. 
The claim follows from this observation together with 
Lemmas~\ref{lem:frobKLR},~\ref{lem:BRfull} and Corollary~\ref{cor:BRinj}. 
\end{proof}

\begin{thm}
The isomorphism $K_0(R_{n+1}^\lambda)\to V_\lambda^{\sll} $ of Theorem~\ref{thm:BK}  sends the projective 
${}_{e(s)}P$ to the Gelfand-Tsetlin basis element $\ket{s}$. 
\end{thm}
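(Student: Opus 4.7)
The plan is to factor the isomorphism of Theorem~\ref{thm:BK} through the quotient $\widecheck{\psi}\colon R_{n+1}^\lambda\to\widecheck{R}_{n+1}^\lambda$ and invoke Theorem~\ref{thm:GTcat}.

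First I would verify that for each $s\in\cS(\lambda)$ one has $\widecheck{\Psi}({}_{e(s)}P)\cong {}_{e(s)}L$ as modules over $\widecheck{R}_{n+1}^\lambda$. Using the direct sum decomposition
$$\widecheck{R}_{n+1}^\lambda=\bigoplus_{s'\in\cS(\lambda)}R_{n+1}^\lambda/\ker(\pi_{s'})$$
from Definition~\ref{def:quotGT}, the image of ${}_{e(s)}P=e(s)R_{n+1}^\lambda$ under $\widecheck{\Psi}$ splits componentwise, with the $s'$-component equal to $\pi_{s'}(e(s))\cdot R_{n+1}^\lambda/\ker\pi_{s'}$. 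By iterating Lemma~\ref{lem:cycinc}, the surjection $\pi_{s'}$ kills $e(s)$ whenever $s'\neq s$, since the nested sequence of $p_{i_1\dotsm i_k}$'s encoded by $s$ differs at some stage from the one dictated by $s'$, and this mismatch places the corresponding weight-block of $e(s)$ inside $\ker\pi_{s'}$. The only surviving component is the $s$-summand, which is ${}_{e(s)}L$ by definition.

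Passing to Grothendieck groups, this gives $K_0(\widecheck{\psi})([{}_{e(s)}P])=[{}_{e(s)}L]$, which under the isomorphism of Theorem~\ref{thm:GTcat} maps to $\ket{s}\in V_\lambda^\sll$. It then remains to compare the composition
$$K_0(R_{n+1}^\lambda)\xra{K_0(\widecheck{\psi})}K_0(\widecheck{R}_{n+1}^\lambda)\xra{\cong}V_\lambda^\sll$$
just constructed with the isomorphism of Theorem~\ref{thm:BK}. Both are $\sll$-module isomorphisms between $K_0(R_{n+1}^\lambda)$ and the irreducible $V_\lambda^\sll$, so by Schur's lemma they differ by a nonzero scalar. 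To normalize this scalar to $1$ I would evaluate on the class of the highest weight projective of $R_{n+1}^\lambda$, corresponding to the pattern $s_0\in\cS(\lambda)$ in which no boxes are removed at any stage; both maps send this class to the highest weight vector $v_\lambda\in V_\lambda^\sll$.

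The main obstacle will be the first step, namely establishing that $\pi_{s'}(e(s))=0$ for $s'\neq s$. This requires careful bookkeeping of how the composite surjections $\pi_{i_1\dotsm i_k}$ of Lemma~\ref{lem:cycinc} act on the nested special idempotents $e(p_{i_1\dotsm i_k},\und{j})$. The weight-grading on cyclotomic KLR algebras together with the block-wise behavior of the $\pi_{i_1\dotsm i_k}$'s should force the vanishing at the first layer where the two patterns diverge, but the argument needs to be made precise by recursion on $n$, using at each level the explicit formulas for the surjections derived in the proof of Lemma~\ref{lem:cycinc}.
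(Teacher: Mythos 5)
Your overall strategy matches the paper's implicit line of argument: factor the map of Theorem~\ref{thm:BK} through the quotient $\widecheck{\psi}\colon R_{n+1}^\lambda\to\widecheck{R}_{n+1}^\lambda$, apply Theorem~\ref{thm:GTcat} to land at $\ket{s}$, and normalize the scalar ambiguity from Schur's lemma by evaluating on the highest weight class. The point of divergence is in how you establish the identification of $\widecheck{\Psi}({}_{e(s)}P)$ with ${}_{e(s)}L$. The paper's route is more indirect: the proposition just before the statement asserts (via an inductive argument starting from the modules ${}_{i_1\dotsm i_k,\und{j}}L$) that ${}_{e(s)}P$ is the projective cover of ${}_{e(s)}L$ in $R_{n+1}^\lambda-\amod$; combined with the isomorphism $K_0(\widecheck{\psi})$ from the proof of Theorem~\ref{thm:GTcat}, this already identifies $[{}_{e(s)}P]\mapsto[{}_{e(s)}L]$ at the level of Grothendieck groups, without any need to compute $\pi_{s'}(e(s))$. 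Your approach instead tries to show the vanishing $\pi_{s'}(e(s))=0$ for $s'\neq s$ explicitly, which is a valid reformulation but places the burden on a diagrammatic computation.

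There is a concrete soft spot in the way you argue that vanishing. You say the mismatch between the nested $p$-sequences of $s$ and $s'$ "places the corresponding weight-block of $e(s)$ inside $\ker\pi_{s'}$." This is only a block-theoretic argument, and it is not strong enough on its own: two distinct Gelfand–Tsetlin patterns $s\neq s'$ that remove the same number of boxes at every stage produce idempotents $e(s)$ and $e(s')$ lying in the \emph{same} weight block $R_{n+1}^\lambda(\beta)$ of the cyclotomic KLR algebra, so one cannot conclude $\pi_{s'}(e(s))=0$ purely from a grading or block mismatch. What is actually needed is the finer structure used in the proof of Corollary~\ref{cor:BRinj}: the lexicographic order on the $(i_1,\dotsc,i_k)$'s, the factorization of arbitrary idempotents through special idempotents $e(p_{i_1\dotsm i_k},\und{j})$ (Propositions~\ref{prop:onestrandn} and~\ref{prop:genfactoriz}), and the description of $\ker\pi_{i_1\dotsm i_k}$ as the ideal generated by the $\widetilde{A}_{i_1\dotsm i_k}^\bot$-terms from Lemma~\ref{lem:cycinc}. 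Those facts together control the $\widetilde{A}_{i'_1\dotsm i'_k}$-component of $e(s)$ even inside a common weight block, and give the vanishing you want. So the gap you flag is real, and the block argument alone will not close it; you would need to run the recursion through the factorization machinery of Subsection~\ref{ssec:factidemp}, or simply invoke the projective cover statement as the paper does.
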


This basis is not orthogonal with respect to the $q$-Shapovalov form, but 
it can be use to redefining another bilinear form $(\ \,,\ )$ on $V_\lambda^{\sll}$ as 
\begin{equation*}
( [P],[P'] ) := \gdim\Hom_{\widecheck{R}_{n+1}^\lambda-\amod}( P , P' )
\end{equation*}
for $P$, $P'$ objects in ${R}_{n+1}^\lambda-\prmod$,  
clearly giving $( {}_{e(s)}P, {}_{e(s')}P )=0$ if $s\neq s'$.

%%%%%%%%%%%%%%%%%%%%%%%%%%%%%%%%%%%%%%%%%%%%%%%%%%%%%%%%%%%%%%%%%%%
\subsection{A functorial realisation of the Gelfand-Tsetlin basis}%
%%%%%%%%%%%%%%%%%%%%%%%%%%%%%%%%%%%%%%%%%%%%%%%%%%%%%%%%%%%%%%%%%%%

For each $s\in\cS(\lambda)$ we also have functors
\begin{align*}
\Pi^{s} := \Pi^{\mu^1}\dotsm\Pi^{\mu^n}\Pi^\lambda 
&\colon 
R_{n+1}^\lambda-\amod \to \Bbbk-\amod 
\\[1ex]
\widetilde{\Pi}^{s} := \widetilde{\Pi}^{\mu^1}\dotsm\widetilde{\Pi}^{\mu^n}\widetilde{\Pi}^\lambda 
&\colon 
\widetilde{R}_{n+1}^\lambda-\amod \to \Bbbk-\amod 
\intertext{and}
\widetilde{\res}^{s} := \widetilde{\res}^{\lambda}\widetilde{\res}^{\mu^n}\dotsm\widetilde{\res}^{\mu^1} 
&\colon 
\Bbbk-\amod \to \widetilde{R}_{n+1}^\lambda-\amod 
\end{align*}
with the obvious definition of the categories $\widetilde{R}_{j}^{\mu^j}-\amod$.

\begin{lem}
Functors $\Pi^s$ have orthogonal hom-spaces, in the sense that 
for an $R_{n+1}^\lambda$-module $M$ we have that $\Hom_{\oplus_{d_\lambda}\Bbbk-\amod}(\Pi^s(M),\Pi^{s'}(M))=0$ 
if $s\neq s'$. 
\end{lem}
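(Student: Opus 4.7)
The strategy is to show that the functors $\Pi^s$ factor through pairwise disjoint summands of the target category $\oplus_{d_\lambda}\Bbbk-\amod$, after which the orthogonality becomes automatic from the fact that morphisms between distinct summands of a direct sum category vanish.

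First, I would unwind the composition $\Pi^s = \Pi^{\mu^1}\dotsm\Pi^{\mu^n}\Pi^\lambda$. At each level $j$ the projection $\Pi^{\mu^j}$ decomposes as $\bigoplus_{\xi \in \cD_{\mu^j}} \Pi^{\mu^j}_\xi$ with the $\xi$-component landing in the block $R_{j-1}^{\xi(\mu^j)}-\amod$, as provided by Lemma~\ref{lem:cycinc} and Proposition~\ref{prop:branchrules}. The Gelfand-Tsetlin pattern $s = (\lambda, \mu^n, \dotsc, \mu^1)$ determines, via the inbetweenness condition~\eqref{eq:inbetw}, a unique $\lambda$-dominant map $\xi^j_s$ at each level sending $\mu^{j+1}$ to $\mu^j$. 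Thus $\Pi^s$ is literally the composite of these chosen components, landing in a specific copy of $\Bbbk-\amod$ indexed by $s$ inside the iterated direct sum. This identifies $\oplus_{d_\lambda}\Bbbk-\amod$ with the terminal category obtained from iterating the branching rule of Theorem~\ref{thm:branchrules}, with summands indexed by $\cS(\lambda)$.

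Given distinct patterns $s \neq s' \in \cS(\lambda)$, let $j^*$ be the largest level at which they still agree. At the very next projection, the components $\xi^{j^*-1}_s$ and $\xi^{j^*-1}_{s'}$ differ and deposit their outputs in distinct blocks, corresponding to distinct cyclotomic KLR algebras. All subsequent projections preserve this block splitting (each $\Pi^{\mu^j}_\xi$ is defined on a single block of its source), so $\Pi^s(M)$ and $\Pi^{s'}(M)$ end up in disjoint copies of $\Bbbk-\amod$ inside $\oplus_{d_\lambda}\Bbbk-\amod$, giving the vanishing of the hom-space. The main obstacle is purely bookkeeping: matching compositions of $\lambda$-dominant $\xi$-maps with elements of $\cS(\lambda)$ and verifying that each intermediate projection respects the relevant block decomposition. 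Once this correspondence is made explicit, the conclusion is tautological from the disjoint-block structure of the categorical branching rules.
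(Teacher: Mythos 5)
The paper states this lemma without proof, so there is no explicit argument in the text to compare against; your proposal correctly supplies the missing one. The essential content is exactly what you isolate: the iterated application of the categorical branching rule identifies $\oplus_{d_\lambda}\Bbbk-\amod$ with $\bigoplus_{s\in\cS(\lambda)}\Bbbk-\amod$, with summands indexed by \emph{entire} Gelfand-Tsetlin paths rather than by endpoint partitions, and $\Pi^s$ is by construction the projection onto the $s$-labelled summand; orthogonality of hom-spaces is then a formal consequence of the direct-sum structure. One phrasing quibble: your ``largest level at which they still agree'' reads awkwardly, since both patterns always agree at the top, $\mu^{(n+1)}=\lambda$; what you want is the smallest index $j^{*}$ such that $\mu^{(j)}_{s}=\mu^{(j)}_{s'}$ for all $j\geq j^{*}$, so the first divergence occurs when passing from level $j^{*}$ to $j^{*}-1$. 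Your subsequent claim that the divergence persists all the way down is correct precisely because the summands at each stage are indexed by the partial path, so once two branches separate they remain in disjoint summands even if the partitions $\mu^{(j)}_{s}$ and $\mu^{(j)}_{s'}$ happen to coincide again at some later level.
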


Let $\cG\cT(\lambda)$ denote the category of functors 
$$\Fun \colon R_{n+1}^\lambda-\amod \to \Bbbk-\amod$$ 

There are endofunctors acting on $\cG\cT(\lambda)$ defined by 
\begin{align*}
F_i^{\cG\cT}\phi(M) := 
\bigoplus\limits_{r,s\in\cS(\mu^{i-1})} \widetilde{\Pi}^{r}F_i^{\mu^{i-1}} \widetilde{\res}^{s}\phi(M)
\intertext{and}
E_i^{\cG\cT}\phi(M) := 
\bigoplus\limits_{r,s\in\cS(\mu^{i-1})} \widetilde{\Pi}^{r}E_i^{\mu^{i-1}} \widetilde{\res}^{s}\phi(M)
\end{align*}
for $\phi$ a functor $R_{n+1}^\lambda-\amod \to \Bbbk-\amod$, 
$M$ an $R_{n+1}^\lambda$-module and  $i=1,\dotsc ,n$.

\begin{lem}
Each pair of functors $F_i^{\cG\cT},E_i^{\cG\cT}$ is biadjoint. 
\end{lem}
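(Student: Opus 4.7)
The plan is to deduce biadjointness of $(F_i^{\cG\cT}, E_i^{\cG\cT})$ from the biadjointness of the pieces used to build them, together with the behaviour of adjoints under composition and direct sums.

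Two previously established facts do most of the work. First, the pair $(F_i^{\mu^{i-1}}, E_i^{\mu^{i-1}})$ is biadjoint (up to the usual grading shifts) as endofunctors of $R_i^{\mu^{i-1}}-\amod$, by the categorical $\sll$-action recalled in~\eqref{eq:Uaction}. Second, the functors $\widetilde{\Pi}^s$ and $\widetilde{\res}^s$ are mutually quasi-inverse equivalences, obtained by iterating the equivalences $\widetilde{\Pi}_k^\lambda$ and $\widetilde{\res}_k^\lambda$ of Subsection~\ref{ssec:catproj}; in particular they automatically form a biadjoint pair.

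Next I would invoke the standard fact that adjunctions compose contravariantly: if $F_1 \dashv G_1$ and $F_2 \dashv G_2$ with composable domains, then $F_2 F_1 \dashv G_1 G_2$, and the analogous statement holds for right adjoints. Applying this to the composite $\widetilde{\Pi}^r F_i^{\mu^{i-1}} \widetilde{\res}^s$ yields $\widetilde{\Pi}^s E_i^{\mu^{i-1}} \widetilde{\res}^r$ as both a left and a right adjoint. Direct sums of functors commute with passing to adjoints, so the adjoint of $F_i^{\cG\cT}$, evaluated on a functor $\phi$, is obtained after re-indexing $r\leftrightarrow s$ as
$$\bigoplus_{r,s \in \cS(\mu^{i-1})} \widetilde{\Pi}^s E_i^{\mu^{i-1}} \widetilde{\res}^r\phi(M) \;=\; E_i^{\cG\cT}\phi(M).$$
The symmetric appearance of $r$ and $s$ in the definitions of $F_i^{\cG\cT}$ and $E_i^{\cG\cT}$ is precisely what allows $E_i^{\cG\cT}$ to serve as a two-sided adjoint of $F_i^{\cG\cT}$.

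Finally, I would transfer the pointwise biadjunction into the functor category $\cG\cT(\lambda)$. An adjunction between endofunctors of a target category $\cC$ automatically lifts to one on $\Fun(\cD, \cC)$ by post-composition, since the units and counits are already pointwise natural and hence assemble into natural transformations of functors out of $\cD$. Applied with $\cD = R_{n+1}^\lambda-\amod$ and $\cC = \Bbbk-\amod$, this produces the required natural isomorphisms
$$\Hom_{\cG\cT(\lambda)}(F_i^{\cG\cT}\phi, \psi) \cong \Hom_{\cG\cT(\lambda)}(\phi, E_i^{\cG\cT}\psi)$$
and its mirror image. There is no real obstacle in this argument: it is essentially bookkeeping on biadjoint pairs already constructed, with care only needed to track the grading shifts inherited from~\eqref{eq:Uaction} and to verify that the summation indices match up after the adjoint is taken.
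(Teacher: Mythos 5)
The paper states this lemma without proof, so there is no "paper's own proof" to compare against; your argument is filling a gap that the author leaves to the reader. That said, your proof is correct and is the natural one to give here.

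The two ingredients you identify are exactly the right ones: first, the pair $(F_i^{\mu^{i-1}},E_i^{\mu^{i-1}})$ is biadjoint by the categorical $\mathfrak{sl}_\ell$-action from~\eqref{eq:Uaction}; second, $\widetilde{\Pi}^s$ and $\widetilde{\res}^s$ are mutually quasi-inverse equivalences (the paper establishes this for each stage $\widetilde{\Pi}_k^\lambda,\widetilde{\res}_k^\lambda$ at the end of Subsection~\ref{ssec:catproj}, and the composites $\widetilde{\Pi}^s,\widetilde{\res}^s$ inherit the property), hence biadjoint. The contravariant composition of adjunctions then gives $\widetilde{\Pi}^s E_i^{\mu^{i-1}}\widetilde{\res}^r$ as the two-sided adjoint of $\widetilde{\Pi}^r F_i^{\mu^{i-1}}\widetilde{\res}^s$, and your observation that the double sum over $r,s\in\cS(\mu^{i-1})$ is symmetric under interchanging the two indices is precisely what makes the direct sum close up onto $E_i^{\cG\cT}$. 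The last step — lifting a (bi)adjunction of endofunctors of $\Bbbk-\amod$ to one on the functor category $\cG\cT(\lambda)$ via post-composition — is also correct: the unit and counit of the pointwise adjunction assemble into modifications because the adjunction isomorphism is natural in both variables. The only caveat worth flagging explicitly, which you already do, is that the biadjunction in~\eqref{eq:Uaction} holds only up to the grading shift $\{\alpha_i^\vee(\lambda-\nu)-1\}$, and the shift should be tracked through the direct sum; but this is bookkeeping and does not affect the argument.
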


\begin{prop}
The collection of functors $\{ F_i^{\cG\cT},E_i^{\cG\cT}\}_{i=1,\dotsc ,n}$ defines a categorical 
$\sll$-action on $\cG\cT(\lambda)$.
\end{prop}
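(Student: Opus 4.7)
The plan is to reduce the verification of a categorical $\sll$-action on $\cG\cT(\lambda)$ to the already-established categorical actions on the intermediate cyclotomic KLR categories $R_j^{\mu^j}-\amod$ and their quotients $\widetilde{R}_j^{\mu^j}-\amod$. Biadjointness of each pair $\{F_i^{\cG\cT}, E_i^{\cG\cT}\}$ is already provided by the preceding lemma, so what remains is to exhibit natural isomorphisms encoding the $EF$--$FE$ commutation (with the weight-space correction when $i=j$) and the quantum Serre relations among the $F_i^{\cG\cT}$s (and symmetrically among the $E_i^{\cG\cT}$s).

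First, I would observe that by the orthogonality of the $\Pi^s$ a natural transformation between two functors built out of compositions of $F_i^{\cG\cT}$s and $E_i^{\cG\cT}$s is determined by its components on the collection $\{\Pi^s\}_{s\in\cS(\lambda)}$. For such a functor, applying $\widetilde{\res}^{s'}\widetilde{\Pi}^s$ yields something isomorphic to the identity when $s=s'$ and zero otherwise, as follows from the discussion in Subsection~\ref{ssec:catproj} applied recursively. Consequently, on a functor of the form $\Pi^s$ the definition of $F_i^{\cG\cT}$ and $E_i^{\cG\cT}$ collapses to a direct sum of functors of the shape $\widetilde{\Pi}^r G_i^{\mu^{i-1}}\widetilde{\res}^s$, where $G_i^{\mu^{i-1}}$ is one of the Chevalley generators acting on $\widetilde{R}_{i-1}^{\mu^{i-1}}-\amod$.

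With this simplification in hand, I would verify the required relations by pulling them back through $\widetilde{\res}$ and $\widetilde{\Pi}$ to the corresponding relations at a single intermediate level $\widetilde{R}_{i-1}^{\mu^{i-1}}-\amod$, which in turn inherits them, via the equivalence of Subsection~\ref{ssec:catproj}, from the categorical $\sll$-actions on cyclotomic KLR categories recalled in Section~\ref{sec:KLRalgebras}. For $i=j$ the reduction is immediate, since the two functors in $[E_i^{\cG\cT},F_i^{\cG\cT}]$ act at the same intermediate level; the weight-space summand appearing in the identity follows from the cyclotomic weight-space identity there. For $i\neq j$, and for the Serre relations, one invokes Lemma~\ref{lem:branch-k}, which says that $\Pi$ intertwines the categorical $\sln$-action, to commute the action of $G_j^{\mu^{j-1}}$ through the chain of projection and restriction functors separating the levels $\mu^{i-1}$ and $\mu^{j-1}$, so that both sides of the desired isomorphism are ultimately compared at one common level.

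The main obstacle will be the bookkeeping when $i\neq j$: one must track the different intermediate levels at which $F_i^{\cG\cT}$ and $F_j^{\cG\cT}$ are defined and verify that the recursive intertwining through the chain of $\widetilde{\Pi}$ and $\widetilde{\res}$ functors at distinct levels does not introduce spurious summands or grading shifts when reindexing the double direct sum over $\cS(\mu^{i-1})\times\cS(\mu^{j-1})$. This uses in an essential way the compatibility established in Subsection~\ref{ssec:catbranching}, notably Lemmas~\ref{lem:BR-biadj} and~\ref{lem:branch-k}, under recursive iteration, so that the relations at the top level $\lambda$ can in the end be tested after restriction to the unique common level where $F_i$ and $F_j$ act directly.
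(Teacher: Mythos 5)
The paper does not supply a proof of this proposition: it is stated without argument, immediately following the (also unproved) biadjointness lemma and just before the closing conjecture. There is therefore no ``paper's own proof'' to compare against. What \emph{is} in the paper is a proved analogue one level down, namely the proposition in Subsection~\ref{sec:recsll} asserting that $\{F_n^\xi, E_n^\xi\}$ give a categorical $\mathfrak{sl}_2$-action, whose proof cancels the intermediate $\widetilde{\res}\,\widetilde{\Pi}$ via the identity-functor isomorphism and reduces the commutation relation to the one for $\widetilde F_n^\lambda, \widetilde E_n^\lambda$. Your plan is precisely a recursive iteration of that argument, which is the natural thing to do and is consistent with the level of detail the paper itself employs.

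Two remarks on the substance. First, a small but real slip: you write that ``applying $\widetilde{\res}^{s'}\widetilde{\Pi}^s$ yields something isomorphic to the identity when $s=s'$ and zero otherwise.'' The composite in that order is an endofunctor of $\widetilde{R}_{n+1}^\lambda$-$\amod$ and is not the identity for a single $s$; rather $\bigoplus_s\widetilde{\res}^s\widetilde{\Pi}^s$ is the identity. The Kronecker-delta behaviour you want belongs to the opposite composite $\widetilde{\Pi}^{s'}\widetilde{\res}^s\cong\delta_{s,s'}\mathrm{id}_{\Bbbk\text{-}\amod}$, which is the one actually used in the cancellation step of the paper's $\mathfrak{sl}_2$ proof; swap the order in your argument. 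Second, you flag the $i\neq j$ bookkeeping (commuting $G_j^{\mu^{j-1}}$ through the chain separating levels $\mu^{i-1}$ and $\mu^{j-1}$ via Lemma~\ref{lem:branch-k}) as the main obstacle but leave it as a plan rather than a completed verification. Since the paper itself omits the proof entirely, that gap is forgivable as an outline, but if one wanted a self-contained argument one would need to carry out that index-reshuffling explicitly, in particular checking that no extraneous summands or grading shifts survive the reindexing of $\cS(\mu^{i-1})\times\cS(\mu^{j-1})$ after the $\widetilde{\Pi}\widetilde{\res}$ cancellations.
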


\begin{conj}
We have an isomorphism of $\sll$-modules $K_0(\cG\cT(\lambda))\cong V_\lambda^{\sll}$
that takes $\Pi^s$ to the Gelfand-Tsetlin basis element $\ket{s}$.
\end{conj}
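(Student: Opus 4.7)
The plan is to construct a full, essentially surjective, exact functor $\Phi\colon \widecheck{R}_{n+1}^\lambda-\prmod \to \cG\cT(\lambda)$ sending ${}_{e(s)}L \mapsto \Pi^s$ and intertwining the categorical $\sll$-actions on both sides. Combined with Theorem~\ref{thm:GTcat}, this would reduce the conjecture to verifying that the classes $\{[\Pi^s]\}_{s\in\cS(\lambda)}$ are linearly independent and span $K_0(\cG\cT(\lambda))$.

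First I would establish the factorisation $\Pi^s \cong \widetilde{\Pi}^s \circ \widecheck{\Psi}$, which follows directly from Definition~\ref{def:quotGT}: by construction the kernel of the iterated projection $\pi_s$ acts as zero through $\Pi^s$, so $\Pi^s$ factors through the quotient functor to $\widecheck{R}_{n+1}^\lambda-\amod$. Via the equivalence between $\widetilde{R}^\lambda_{n+1}-\amod$ and $\bigoplus_{\xi\in\cD^\lambda}R^{\xi(\lambda)}_n-\amod$ produced in Subsection~\ref{ssec:catproj}, this exhibits $\Pi^s$ as the functor represented by ${}_{e(s)}L$ seen as an $(R^\lambda_{n+1},\Bbbk)$-bimodule. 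In particular each $\Pi^s$ is nonzero because ${}_{e(s)}L$ is, and the orthogonality lemma just established for $\Pi^s$ matches the orthogonality of the ${}_{e(s)}L$ proved in the previous subsection.

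Second, using this representation one checks that $\Phi$ is $\sll$-equivariant: the pair $(F_i^{\cG\cT}, E_i^{\cG\cT})$ and the pair $(\widecheck{F}_i^{\lambda}, \widecheck{E}_i^{\lambda})$ of~\eqref{eq:cdiagGT} are both obtained from $(F_i^\lambda, E_i^\lambda)$ by pre- and post-composition with $\bigoplus_s \widetilde{\res}^s \widetilde{\Pi}^s$, which induces the identity functor under the equivalence cited above. The commutative diagram~\eqref{eq:cdiagGT} then transports the categorical $\sll$-action on $\widecheck{R}_{n+1}^\lambda-\amod$ onto the $\Pi^s$ in exactly the manner that defines $F_i^{\cG\cT}$ and $E_i^{\cG\cT}$.

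The main obstacle will be to give a sufficiently controlled definition of $K_0(\cG\cT(\lambda))$, since the full functor category is too large to possess a well-behaved Grothendieck group a priori. The natural remedy is to restrict to the full subcategory of exact functors isomorphic to finite direct sums of grading-shifted $\Pi^s$ and verify that it is preserved by $F_i^{\cG\cT}$ and $E_i^{\cG\cT}$. Once this is in place, the orthogonality lemma together with nonzeroness of the $\Pi^s$ forces the $[\Pi^s]$ to be linearly independent; since $|\cS(\lambda)| = d_\lambda = \dim V_\lambda^{\sll}$ and $\Phi$ is surjective on classes, comparison with Theorem~\ref{thm:GTcat} yields the desired isomorphism $K_0(\cG\cT(\lambda)) \cong V_\lambda^{\sll}$ sending $[\Pi^s]\mapsto \ket{s}$.
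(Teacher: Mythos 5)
The statement you are trying to prove is left as an open conjecture in the paper: there is no proof to compare your proposal against, so it should be read as a plan of attack rather than a reconstruction.

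Your outline has the right ingredients --- orthogonality of the $\Pi^s$, nonzeroness via ${}_{e(s)}L$, the factorisation $\Pi^s \cong \widetilde{\Pi}^s \widecheck{\Psi}$, and a dimension count against Theorem~\ref{thm:GTcat} --- but the central gap is exactly the one you flag and then paper over. The category $\cG\cT(\lambda)$ is defined as the \emph{entire} category of functors $R_{n+1}^\lambda-\amod \to \Bbbk-\amod$; its Grothendieck group is not a priori well-behaved, and the paper gives no additive or Karoubian structure against which to take $K_0$. Your remedy --- restrict to the full subcategory whose objects are finite direct sums of shifted $\Pi^s$ --- does not resolve the conjecture as stated, it replaces $\cG\cT(\lambda)$ by a different category for which the statement becomes essentially a bookkeeping exercise. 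To make this honest you would at minimum have to prove that this subcategory is closed under $F_i^{\cG\cT}$ and $E_i^{\cG\cT}$ (i.e.\ that $F_i^{\cG\cT}\Pi^s$ decomposes as a finite sum of shifted $\Pi^{s'}$), and that closure statement is itself the real content of the conjecture rather than a preliminary.

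A second, more technical issue is your equivariance argument. You assert that $(F_i^{\cG\cT},E_i^{\cG\cT})$ and $(\widecheck{F}_i^\lambda,\widecheck{E}_i^\lambda)$ are ``both obtained from $(F_i^\lambda,E_i^\lambda)$ by pre- and post-composition with $\bigoplus_s \widetilde{\res}^s\widetilde{\Pi}^s$.'' This is not what the paper's definitions say: $\widecheck{F}_j^\lambda$ is built from $F_j^\lambda$ acting at the level of $R^\lambda_{n+1}$, whereas $F_i^{\cG\cT}$ is built from $F_i^{\mu^{i-1}}$ acting at an intermediate level $R_i^{\mu^{i-1}}$ in the chain of branchings, with different families of $\widetilde{\Pi}$ and $\widetilde{\res}$ inserted. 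Identifying these requires a genuine compatibility statement between the $\sll$-action at different stages of the recursion, which your sketch asserts without argument. Until both of these points are addressed, the proposal is a plausible strategy but not a proof.
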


%%%%%%%%%%%%%%%%%%%%%%%%%%%%%%%%%%%%%%%%%%%%
%%%                                      %%%
%%%           Applications               %%%
%%%                                      %%%
%%%%%%%%%%%%%%%%%%%%%%%%%%%%%%%%%%%%%%%%%%%%%
\section{Cyclotomic KLR algebras categorify Weyl modules}\label{sec:appl}

\subsection{The $q$-Schur categorification}\label{ssec:qshur}

In~\cite{MSV} a diagrammatic categorification of the $q$-Schur 
algebra was constructed using a quotient of Khovanov and Lauda's 
categorified quantum groups from~\cite{KL3, KL:err}.
Khovanov and Lauda's categorified quantum $\mathfrak{sl}_n$ 
consists of a 2-category ${\cU}(\mathfrak{sl}_n)$ defined from the following data. 
The objects are \emph{weights} $\lambda\in\bZ^{n-1}$. 
The 1-morphisms are products of symbols 
$\lambda'\mathcal{F}_{i}\lambda$ (with 
$\lambda'_j = \lambda_j+1$ if $j=i\pm 1$, $\lambda'_j = \lambda_j-2$ if $j=i$, 
and $\lambda'_j = \lambda_j$ otherwise) 
and 
$\lambda'\mathcal{E}_{i}\lambda$ (with 
$\lambda'_j = \lambda_j-1$ if $j=i\pm 1$, $\lambda'_j = \lambda_j+2$ if $j=i$, 
and $\lambda'_j = \lambda_j$ otherwise) 
with the convention that says that
$\lambda'\mathcal{F}_{i}\mu\nu\mathcal{F}_{i}\lambda$
and 
$\lambda'\mathcal{E}_{i}\mu\nu\mathcal{E}_{i}\lambda$ 
are zero unless $\mu=\nu$.
The 2-morphism of ${\cU}(\mathfrak{sl}_n)$ are given by planar diagrams in a strip generated 
by oriented arcs that can intersect transversely and can be decorated with dots 
(closed oriented 1 manifolds are allowed). 
This graphical calculus is a generalization of the KLR algebras to a calculus where the strands 
can travel in all directions in the sense that 
it gives the KLR diagrammatics when we restrict strands to travel only downwards.  
The boundary of each arc is decorated with a 1-morphism. 
These 2-morphisms are subject to a set of relations which we do not give here 
(see~\cite{KL3, MSV} for details).

In~\cite{MSV} Khovanov and Lauda's categorified 
quantum $\mathfrak{sl}_n$ was upgraded to a categorification ${\cU}(\mathfrak{gl}_n)$ of quantum $\mathfrak{gl}_n$
(taking Khovanov and Lauda's diagrams and relations of ${\cU}(\mathfrak{sl}_n)$ with $\mathfrak{gl}_n$-weights)
and define the categorification of $S_q(n,d)$ as the quotient of ${\cU}(\mathfrak{gl}_n)$
by 2-morphisms factoring through a weight not in $\Lambda(n,d)$.

\begin{defn}
The category $\cS(n,d)$ is the quotient of $\cU(\mathfrak{gl}_n)$ by the ideal generated by all 2-morphisms 
containing a region with a label not in $\Lambda(n,d)$. 
\end{defn}

The main result of~\cite{MSV} is that $\cS(n,d)$ categorifies the $q$-Schur algebra 
from Subsection~\ref{ssec:schur}. 

\begin{thm}[\cite{MSV}]\label{thm:K0schur}
There is an isomorphism of $\bQ(q)$-algebras  
\begin{equation*}
\gamma\colon \dot{\bf{S}}(n,d) 
\xra{\ \ \cong  \ \ }  
K_0\bigl(\Kar(\cS(n,d))\bigr) .
\end{equation*}
\end{thm}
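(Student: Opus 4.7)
The plan is to reduce the statement to the categorification theorem for $\dot U(\mathfrak{gl}_n)$ and then verify that the 2-categorical quotient defining $\cS(n,d)$ matches, at the level of split Grothendieck groups, the algebraic quotient defining $\dot S(n,d)$. First, I would define $\gamma$ on the idempotented generators: send $1_\lambda$ to the class of the identity 1-morphism on $\lambda$ if $\lambda\in\Lambda(n,d)$ and to $0$ otherwise, and send $E_i1_\lambda$ and $F_i1_\lambda$ to the classes $[\cE_i1_\lambda]$ and $[\cF_i1_\lambda]$ in $\Kar(\cS(n,d))$. Well-definedness requires checking that each defining relation of $S_q(n,d)$ listed in Subsection~\ref{ssec:schur} is witnessed by an isomorphism in $\Kar(\cS(n,d))$; all of these (the $E_iF_j-F_jE_i$ relation, the Serre relations, and the idempotent relations) are inherited from the corresponding isomorphisms in $\Kar(\cU(\mathfrak{gl}_n))$ proved by Khovanov and Lauda, together with the fact that both sides vanish when applied to a weight outside $\Lambda(n,d)$.

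The second step is to invoke the $\mathfrak{gl}_n$-version of Khovanov--Lauda's categorification theorem, namely $K_0(\Kar(\cU(\mathfrak{gl}_n)))\cong\dot U(\mathfrak{gl}_n)$, and to show that taking the 2-categorical quotient by 2-morphisms factoring through weights outside $\Lambda(n,d)$ corresponds, under $K_0$, precisely to killing the 1-morphisms $1_\lambda$ with $\lambda\notin\Lambda(n,d)$. Concretely, the hom-space $\cS(n,d)(1_\lambda 1_\mu,1_\lambda 1_\mu)$ for such $\lambda,\mu$ is $0$ by construction, so the classes of all 1-morphisms whose source or target lies outside $\Lambda(n,d)$ are zero in $K_0(\Kar(\cS(n,d)))$. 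This gives a well-defined surjection from $K_0(\Kar(\cU(\mathfrak{gl}_n)))/J\cong\dot S(n,d)$ onto $K_0(\Kar(\cS(n,d)))$ which coincides with $\gamma$.

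Surjectivity of $\gamma$ follows from surjectivity in the Khovanov--Lauda theorem: classes of indecomposable 1-morphisms of $\Kar(\cU(\mathfrak{gl}_n))$ generate $K_0$, and those whose source and target are both in $\Lambda(n,d)$ survive to generate $K_0(\Kar(\cS(n,d)))$. Injectivity is the main obstacle. The cleanest approach is to exhibit a faithful categorical action of $\cS(n,d)$ on a categorification of a sufficiently large representation of $S_q(n,d)$ (for instance a direct sum of tensor products of fundamental representations, or equivalently a suitable cohomology of partial flag varieties), and then deduce that no nontrivial element of $\dot S(n,d)$ maps to zero. Alternatively, one argues that the idempotent truncation $1_{\Lambda(n,d)}\dot U(\mathfrak{gl}_n)1_{\Lambda(n,d)}$ has the same graded dimension in each weight-bidegree as $K_0(\Kar(\cS(n,d)))$, using the Khovanov--Lauda basis on one side and an explicit spanning set by diagrams with all regions labeled in $\Lambda(n,d)$ on the other.

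The hard part is the injectivity/faithfulness step: showing that passage to the Karoubi envelope after the quotient does not introduce new indecomposables beyond those corresponding to a $\bZ[q,q^{-1}]$-basis of $\dot S(n,d)$. This is where the existence of a well-behaved categorical action or a comparison with a known geometric categorification of $S_q(n,d)$ is essential; without such an input, the naive argument only gives surjectivity.
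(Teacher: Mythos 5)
This theorem is not proved in the present paper; it is quoted directly from \cite{MSV}, so there is no internal proof to compare against. That said, your sketch is a reasonable reconstruction of the argument's shape as it actually appears in \cite{MSV}: define $\gamma$ on the idempotented generators, verify the $S_q(n,d)$ relations in $\Kar(\cS(n,d))$ by inheriting the corresponding isomorphisms from $\Kar(\cU(\mathfrak{gl}_n))$ and observing that idempotents with weight outside $\Lambda(n,d)$ vanish, deduce surjectivity from surjectivity at the $\cU(\mathfrak{gl}_n)$ level, and isolate injectivity as the genuine difficulty. Your diagnosis of that difficulty is also on target: in \cite{MSV} the injectivity input is precisely a faithful $2$-representation built from bimodules over cohomology rings of partial flag varieties (equivalently, polynomial rings), which establishes non-degeneracy of the graphical calculus; combined with a dimension comparison against the known $\bZ[q,q^{-1}]$-basis of $\dot{\bf{S}}(n,d)$, this forces $\gamma$ to be injective and controls the indecomposables produced by passing to the Karoubi envelope. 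So your route is consistent with the source. The only caveat is that the non-degeneracy step is the technical core of \cite{MSV} and you treat it as a black box, which you acknowledge explicitly; as a blind sketch of a theorem whose proof occupies the bulk of another paper, that is the honest and correct thing to flag.
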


%%%%%%%%%%%%%%%%%%%%%%%%%%%%%%%%%%%%%%%%%%%%%%%%%%
\subsection{Categorical Weyl modules}\label{ssec:weyl}

Recall that 
$$W_{\lambda}\cong 1_{\lambda}\dot{\bf{S}}(n,d)/[\mu>\lambda]$$ 
where ``$>$'' is the lexicographic order,  
is an irreducible for $\dot{\bf{S}}(n,d)$ and that 
all irreducibles can be obtained this way.
It was conjectured in~\cite{MSV} that it is easy to categorify the irreducible 
representations $W_{\lambda}$, for $\lambda\in\Lambda^+(n,d)$, using the 
category $\cS(n,d)$.

\begin{defn} 
For any $\lambda\in\Lambda^+(n,d)$, let 
$1_{\lambda}\cS(n,d)$ be the category whose objects are the $1$-morphisms 
in $\cS(n,d)$ of the form $1_{\lambda}x$ and whose morphisms are the 
$2$-morphisms in $\cS(n,d)$ between such 1-morphisms. 
Note that 
$1_{\lambda}\cS(n,d)$ does not have a monoidal structure, because 
two $1$-morphisms $1_{\lambda}x$ and $1_{\lambda}y$ cannot be composed in general. 
Alternatively one can see $1_{\lambda}\cS(n,d)$ as a graded ring, whose 
elements are the morphisms. 
\end{defn}
\begin{defn}
Let ${\cV}_{\lambda}$ be the quotient of $1_{\lambda}\cS(n,d)$ by 
the ideal generated by all diagrams which contain a region labeled by 
$\mu>\lambda$. 
\end{defn} 

There is a natural categorical action of $\cS(n,d)$, and therefore 
of $\cU(\mathfrak{sl}_n)$, on ${\cV}_{\lambda}$, 
defined by putting a diagram in $\cS(n,d)$ on the right-hand side of a 
diagram in ${\cV}_{\lambda}$. This action descends to an action of 
$\dot{\bf{S}}(n,d)\cong K_0\bigl(\Kar{\cS(n,d)}\bigr)$ on 
$K_0(\Kar({\cV}_{\lambda}))$. 
The map  
$\gamma$ from Theorem~\ref{thm:K0schur}  
induces a well-defined linear map 
$\gamma_{\lambda}\colon W_{\lambda}\to K_0(\Kar({\mathcal V}_{\lambda}))$, which 
intertwines the $\dot{\mathbf{S}}(n,d)$-actions.  
It was proved in~\cite{MSV} that 
$\gamma_{\lambda}$ is surjective and 
it was conjectured that it is an isomorphism.  
Since $W_{\lambda}$ 
is irreducible, we have $K_0(\Kar({\cV}_{\lambda}))\cong V_{\lambda}$ or 
$K_0(\Kar({\cV}_{\lambda}))=0$. 
So it suffices to show that 
$K_0(\Kar({\cV}_{\lambda}))\ne 0$.

\medskip

From now on we regard $R^{\lambda}_{n+1}$ as the category whose objects 
are sequences of simple roots and morphisms are KLR diagrams. 
Let ${\mathcal N}_\lambda$ be the two-sided ideal generated by diagrams of 
$R^{\lambda}_{n+1}$ 
containing a bubble of positive degree in its left-most region. 

\begin{defn}
The category $\widecheck{{\cV}}_{\lambda}$ is the quotient 
of  ${\cV}_{\lambda}$ by ${\mathcal N}_\lambda$.
\end{defn}

The ideal ${\mathcal N}_\lambda$ is virtually-nilpotent and therefore 
$\widecheck{{\cV}}_{\lambda}$ has the same Grothendieck group as 
${\cV}_{\lambda}$ (see~\cite[Sec. 7]{MSV}) where it was also explained that this quotient satisfies the cyclotomic 
condition from Definition~\ref{def:cycKLR}. 
In~\cite{MSV} there was defined a functor from $R^{\lambda}_{n+1}$ to $\widecheck{{\cV}}_{\lambda}$ 
which is the identity on objects and morphisms where the strands in the diagrams 
of $R^{\lambda}_{n+1}$ are seen as secretely oriented downwards. 
This functor is clearly full and essentially surjective and it was conjectured to be faithful. 
We denote this functor $\Phi_\lambda$. 
The main result of this section is the following.

\begin{thm}\label{thm:weyl} 
The functor $\Phi_\lambda$ is faithful and therefore an equivalence of categories. 
\end{thm}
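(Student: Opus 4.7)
The plan is to establish the theorem in two stages: first show that the induced map on Grothendieck groups $K_0(\Phi_\lambda)$ is an isomorphism, then upgrade this to faithfulness on each graded Hom space by a dimension count. Since $\Phi_\lambda$ is the identity on objects, full, and essentially surjective, faithfulness is precisely the missing ingredient to obtain an equivalence of categories.

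For the Grothendieck-group step, fullness and essential surjectivity of $\Phi_\lambda$ produce a surjective $\mathfrak{sl}_{n+1}$-intertwiner $K_0(\Phi_\lambda)\colon K_0(R^\lambda_{n+1}) \twoheadrightarrow K_0(\mathrm{Kar}(\widecheck{\mathcal{V}}_\lambda))$. By Theorem~\ref{thm:BK} the source is the irreducible $V_\lambda^{\mathfrak{sl}_{n+1}}$, and by~\cite{MSV} the target is the surjective image of $W_\lambda \cong V_\lambda$ under $\gamma_\lambda$, so Schur's lemma leaves only two possibilities: $V_\lambda$ or zero. To rule out zero it suffices to exhibit a single nonzero projective: the image of the empty-sequence projective $\Phi_\lambda({}_{1_\emptyset}P)$ corresponds to the highest-weight object $1_\lambda$, and because the defining quotient of $\mathcal{V}_\lambda$ only kills regions labelled by strictly higher weights $\mu > \lambda$, the identity endomorphism of $1_\lambda$ survives both this quotient and the virtually-nilpotent quotient defining $\widecheck{\mathcal{V}}_\lambda$. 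Consequently $K_0(\Phi_\lambda)$ is an isomorphism; as a byproduct the first MSV conjecture (that $\gamma_\lambda$ is an isomorphism and hence $\mathcal{V}_\lambda$ categorifies $W_\lambda$) drops out.

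To upgrade from the $K_0$-isomorphism to faithfulness I will match graded Hom dimensions on projective generators. On the KLR side Webster's formula gives $\mathrm{gdim}\,\mathrm{Hom}_{R^\lambda_{n+1}}(P,P') = \langle [P],[P']\rangle$ with respect to the $q$-Shapovalov form. Fullness of $\Phi_\lambda$ makes the natural map $\mathrm{Hom}_{R^\lambda_{n+1}}({}_{e(s)}P,{}_{e(s')}P) \twoheadrightarrow \mathrm{Hom}_{\widecheck{\mathcal{V}}_\lambda}(\Phi_\lambda({}_{e(s)}P),\Phi_\lambda({}_{e(s')}P))$ surjective, so it remains to verify that the graded-Euler pairing on $K_0(\widecheck{\mathcal{V}}_\lambda)$ coincides with the Shapovalov form on $V_\lambda$. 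This identification is forced by biadjointness of the categorical generators $F_i^{\widecheck{\mathcal{V}}}$, $E_i^{\widecheck{\mathcal{V}}}$ inherited from $\mathcal{S}(n,d)$, together with the uniqueness (up to scalar) of a nondegenerate invariant form on the irreducible $V_\lambda$ pinned down by its highest-weight vector. Once the graded dimensions agree, the surjections above are bijections, and faithfulness on arbitrary morphisms follows by additive generation: by Corollary~\ref{cor:BRinj} combined with Subsection~\ref{ssec:factidemp} every object of $R^\lambda_{n+1}-\mathrm{prmod}$ is a direct summand of a finite direct sum of Gelfand--Tsetlin projectives $\{{}_{e(s)}P\{\varepsilon_s\}\}$, and $\Phi_\lambda$ preserves direct sums and summands.

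The main obstacle is the Shapovalov identification on $\widecheck{\mathcal{V}}_\lambda$: although the categorical $\mathfrak{sl}_{n+1}$-action and biadjointness pin down the Euler pairing on classes of indecomposables, a careful bubble-slide computation is needed to rule out extra nonzero Hom contributions beyond those predicted by the form. If that calculation becomes unwieldy, the natural alternative is a recursion that mirrors the categorical branching rule of Section~\ref{sec:catbranching}: verify that $\widecheck{\mathcal{V}}_\lambda$ admits branching quotient functors parallel to the $\widetilde{\Pi}^\lambda_{i_1\dots i_k}$, that $\Phi_\lambda$ intertwines them, and then apply the inductive hypothesis on each $\widecheck{\mathcal{V}}_{\xi_{i_1\dots i_k}(\lambda)}$. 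The base case $n=0$ is immediate because $R^\lambda_1 \cong \Bbbk[x]/(x^{\lambda_1})$ is explicitly matched by the corresponding quotient on the $\mathcal{V}$-side.
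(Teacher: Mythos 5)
Your approach is genuinely different from the paper's, and it is worth contrasting the two. You propose a two-stage dimension count: first show $K_0(\Phi_\lambda)$ is an isomorphism (which is correct, and does give the MSV conjectures as a byproduct), then upgrade to faithfulness by matching $\gdim\Hom$ on both sides of $\Phi_\lambda$ via Webster's formula and the uniqueness of the contravariant form on an irreducible. The paper instead argues diagrammatically: it labels the regions of a KLR diagram in $R_{n+1}^\lambda$ with $\mathfrak{gl}_{n+1}$-weights starting from $\lambda$ on the far left, observes that the only extra relation imposed in $\widecheck{\mathcal V}_\lambda$ that could kill the image of a KLR diagram is ``some region carries a weight with a negative entry,'' and then shows directly that any such $1_{\und{r}}$ already vanishes in $R_{n+1}^\lambda$. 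The mechanism is precisely the branching functor $\Pi^\lambda$: by Corollary~\ref{cor:BRinj} it is injective on objects, and each component $\Pi^{\xi_{i_1\dotsm i_k}(\lambda)}1_{\und{r}}$ is forced to vanish because the negative entry in the rightmost region would force the would-be special idempotent $e(p_{i_1\dotsm i_k},\und r')$ to violate $\lambda$-dominancy. No Shapovalov comparison appears.

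The gap in your primary route is the one you flag yourself: you never actually establish that the graded Euler pairing on $K_0(\widecheck{\mathcal V}_\lambda)$ coincides with the $q$-Shapovalov form, and ``forced by biadjointness'' is not a complete argument because the adjunctions in $\cU(\mathfrak{gl}_n)$ come with weight-dependent degree shifts that must be tracked to match Webster's normalization on the KLR side. More seriously, fullness together with $K_0(\Phi_\lambda)$ being an isomorphism does \emph{not} by itself give faithfulness: the quotient $\Bbbk[x]/(x^2)\twoheadrightarrow\Bbbk$ induces a full functor and an isomorphism on $K_0$ of projectives, yet kills $x$. So the Hom-dimension comparison is the entire content of the theorem, and as stated your proposal leaves it open. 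Your fallback sketch --- a recursion mirroring the categorical branching rule, verifying $\Phi_\lambda$ intertwines branching quotients --- is in spirit what the paper does, but the paper runs the branching argument on diagrams rather than on $\widecheck{\mathcal V}$-side branching functors, which sidesteps having to construct a parallel branching theory on the $\mathcal S(n,d)$ side.
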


\begin{proof}
We can decorate the regions of the diagrams 
of $R_{n+1}^{\lambda}$ 
with $\mathfrak{gl}_{n+1}$-weights, 
starting with a $\lambda$ in the leftmost region and subtracting $\varepsilon_j-\varepsilon_{i+1}$ 
any time we cross a strand labeled $j$. 
In other words, if the region on the left of strand labelled $j$ is decorated with the weight $\lambda'$
then the label of the region immediately at its right is
$\lambda' -\varepsilon_j+\varepsilon_{j+1} =
(\lambda'_1,\dotsc ,\lambda'_j-1,\lambda'_{j+1}+1,\dotsc ,\lambda'_{n+1}).$

We first prove that 
if $X\in R_{n+1}^{\lambda}(\beta)$ contains a region labeled by $\mu\notin\Lambda^{\mathfrak{gl}_{n+1}}$  
then $X=0$.  
It is enough to assume that 
$\mu$ is the label of its rightmost region. 
Moreover we can assume that $\mu_{n+1}<0$. 
For suppose $\mu_j<0$ and $\mu_i\geq 0$ for $i>k$.
Then 
we can use the decomposition in~\eqref{eq:brfunctor}  
and the fact that $\Pi^{\lambda}$ is injective on objects
to obtain an array of diagrams, each one in a distinct $R^{\xi_{i_1\dotsm i_k}(\lambda)}_{n}$, 
but all having the weight $(\mu_1,\dotsc ,\mu_{n})$ in its rightmost region. 
A recursive application of this procedure yields therefore an array of diagrams 
in a direct sum of cyclotomic KLR algebras $\oplus_{\zeta}R^{\zeta}_j$, 
all of them with the rightmost region decorated with   
$(\mu_1,\dotsc ,\mu_{j})$. 
We can assume further that $X$ 
is of the form $1_{\und{r}}$ for some sequence $r$ of simple roots. 

Assume that one of the components $\Pi^{\xi_{i_1\dotsm i_k}(\lambda)}1_{\und{r}}$ is nonzero. 
Then we have a nonzero diagram 
in $R_{m}^{\lambda}$ connecting the special idempotent $e(p_{i_1\dotsm i_k},\und{r}')$ to $1_{\und{r}}$,  
as in 
\begin{equation*}
\labellist
\small 
\pinlabel $\lambda$ at -40 140 
\pinlabel $\mu$ at 420 136
\pinlabel $\widetilde{\mu}$ at 150 46
\tiny \hair 2pt
\pinlabel $\dotsc$ at  35 190 
\pinlabel $\dotsc$ at  330 190 
\pinlabel $\dotsc$ at 195 190 
\pinlabel $\dotsc$ at  35  40
\pinlabel $\dotsc$ at 330  40
\pinlabel $r_1$ at   4 220
\pinlabel $n$   at 244 219
\pinlabel $r_{\vert\beta\vert}$ at 372 215
\pinlabel $i_1$ at   4 15 
\pinlabel $n$ at 104 15
\endlabellist 
\figins{-20.5}{1.1}{smod}
\end{equation*}
Recall that the strands ending in $p_{i_1\dotsm i_k}$ in the bottom 
do not cross each other nor carry any dots.  
All strands labelled $n$ must end at the bottom 
among the ones corresponding to $p_{i_1\dotsm i_k}$.   
We label $\widetilde{\mu}$ the region close to the bottom of $1_{\und{r}}$ and
immediately at the right of the 
last strand labelled $n$, counted from the left.

Let $\vert\alpha_{n-1}\vert_\beta$ and $\vert\alpha_{n}\vert_\beta$ 
be the number of strands labelled $n-1$ and $n$ in $R^\lambda_m(\beta)$ respectively. 
Since $\mu_n < 0$ we must have $\vert\alpha_{n-1}\vert_\beta < \vert\alpha_{n}\vert_\beta$ 
which means that $\widetilde{\mu}_n\leq \mu_n <0$. 
This implies that $e(p_{\und{s}},\und{r}')$ is not a special idempotent, which is a contradiction. 
This forces the component
$\Pi^{\xi_{i_i\dotsm i_k}(\lambda)}1_{\und{r}}$ to be the zero diagram. 
The reasoning above applies to all components $\Pi^{\xi_{i_i\dotsm i_k}(\lambda)}1_{\und{r}}$ 
and altogether, it implies that $\Pi^{\lambda}1_{\und{r}}=0$. 
Since $\Pi^{\lambda}$ 
is injective we conclude that $1_{\und{r}}=0$ in $R_{n+1}^{\lambda}(\beta)$.  
\end{proof}

\begin{cor} 
We have an isomorphism of $\dot{S}(n,d)$ representations
\begin{equation*}
K_0(\Kar({\cV}_\lambda))\cong W_{\lambda} .
\end{equation*}
\end{cor}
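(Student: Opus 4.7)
The plan is to deduce the corollary by chaining together the equivalence $\Phi_\lambda$ established in Theorem~\ref{thm:weyl} with the categorification theorem for cyclotomic KLR algebras (Theorem~\ref{thm:BK}), and then exploiting the fact that $W_\lambda$ is irreducible as a $\dot{S}(n,d)$-module.

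First I would note that, as recalled just before Definition~\ref{def:cycKLR}, the ideal $\mathcal{N}_\lambda$ is virtually-nilpotent, so the quotient projection $\cV_\lambda\to\widecheck{\cV}_\lambda$ induces an isomorphism
\[
K_0(\Kar(\cV_\lambda))\;\cong\;K_0(\Kar(\widecheck{\cV}_\lambda)).
\]
By Theorem~\ref{thm:weyl}, the functor $\Phi_\lambda\colon R^\lambda_{n+1}\to\widecheck{\cV}_\lambda$ is an equivalence of categories, so taking Karoubi envelopes and applying $K_0$ yields
\[
K_0(\Kar(\widecheck{\cV}_\lambda))\;\cong\;K_0(R^\lambda_{n+1})\;\cong\;V^{\sll}_\lambda,
\]
where the final isomorphism is Theorem~\ref{thm:BK}. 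In particular $K_0(\Kar(\cV_\lambda))$ is nonzero.

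Next I would combine this with the results of~\cite{MSV} recalled in Subsection~\ref{ssec:weyl}: the map
\[
\gamma_\lambda\colon W_\lambda\;\longrightarrow\;K_0(\Kar(\cV_\lambda))
\]
is surjective and intertwines the $\dot{S}(n,d)$-actions. Since $W_\lambda$ is irreducible as a $\dot{S}(n,d)$-module, the kernel of $\gamma_\lambda$ is either all of $W_\lambda$ or zero. The nonvanishing established in the previous step rules out the first possibility, so $\gamma_\lambda$ is the required isomorphism of $\dot{S}(n,d)$-representations.

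The substantive content here is already packaged in Theorem~\ref{thm:weyl}, whose proof I would not redo; given that result together with Theorem~\ref{thm:BK} and the virtual nilpotency of $\mathcal{N}_\lambda$, the corollary is essentially a dichotomy-plus-nonvanishing argument with no further obstacles.
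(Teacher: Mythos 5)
Your argument is exactly the one the paper has in mind: the dichotomy from irreducibility of $W_\lambda$ plus surjectivity of $\gamma_\lambda$ (both from \cite{MSV} and recalled in Subsection~\ref{ssec:weyl}), with nonvanishing of $K_0(\Kar(\cV_\lambda))$ supplied by virtual nilpotency of $\mathcal{N}_\lambda$, the equivalence $\Phi_\lambda$ of Theorem~\ref{thm:weyl}, and Theorem~\ref{thm:BK}. This is a correct reconstruction of the intended proof, with no meaningful deviations.
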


%%%%%%%%%%%%%%%%%%%% End Matter %%%%%%%%%%%%%%%%%%%%

%\vspace*{1cm}

%\noindent {\bf Acknowledgements} 
%\subsection*{Acknowledgements}

%%%%%%%%%%%%%%%%%%%%%%%%%%%%%%%%%%%%%%%%
%%%                                  %%%
%%%            Bibliography          %%%
%%%                                  %%%
%%%%%%%%%%%%%%%%%%%%%%%%%%%%%%%%%%%%%%%%

%%%%%%%%%%%%%%%%%%% end of paper %%%%%%%%%%%%%%%%%%%%%%%%%
\end{document}